\newtheorem{theorem}{Theorem}[section]
\newtheorem{lemma}[theorem]{Lemma}
\newtheorem{proposition}[theorem]{Proposition}
\newtheorem{corollary}[theorem]{Corollary}
\newtheorem{remark}[theorem]{Remark}
\numberwithin{equation}{section}
\title{\bf Regular Strichartz estimates in Lorentz-type spaces with application to the $H^s$-critical inhomogeneous biharmonic NLS equation}
\author{{RoeSong Jang, JinMyong An and JinMyong Kim$^*$}\\
\footnotesize{Faculty of Mathematics, {\bf Kim Il Sung} University, Pyongyang, Democratic People's Republic of Korea}\\
\footnotesize{$^*$ Corresponding Author: JinMyong Kim(jm.kim0211@ryongnamsan.edu.kp)}}
\date{}
\begin{document}
\maketitle
\begin{abstract}
In this paper, we investigate the Cauchy problem for the $H^s$-critical inhomogeneous biharmonic nonlinear Schr\"{o}dinger (IBNLS) equation
\[iu_{t}\pm \Delta^{2} u=\lambda |x|^{-b}|u|^{\sigma}u,~u(0)=u_{0} \in H^{s} (\mathbb R^{d}),\]
where $\lambda\in \mathbb C$, $d\ge 3$, $1\le s<\frac{d}{2}$, $0<b<\min \left\{4,2+\frac{d}{2}-s \right\}$ and $\sigma=\frac{8-2b}{d-2s}$.
First, we study the properties of Lorentz-type spaces such as Besov-Lorentz spaces and Triebel-Lizorkin-Lorentz spaces.
We then derive the regular Strichartz estimates for the corresponding linear equation in Lorentz-type spaces.
Using these estimates, we establish the local well-posedness as well as the small data global well-posedness and scattering in $H^s$ for the $H^s$-critical IBNLS equation under less regularity assumption on the nonlinear term than in the recent work \cite{AKR24}. This result also extends the ones of \cite{SP23,SG24} by extending the validity of $d$, $b$ and $s$. Finally, we give the well-posedness result in the homogeneous Sobolev spaces $\dot{H}^s$.
\end{abstract}

\noindent \emph{Mathematics Subject Classification (2020)}. Primary 35Q55, 35A01; Secondary 46E35, 46E30, 42B25.

\noindent \emph{Keywords}. Inhomogeneous biharmonic nonlinear Schr\"{o}dinger equation, critical, well-posedness, regular Strichartz estimates, Lorentz-type spaces, Litttlewood-Paley decomposition.\\

\section{Introduction}\label{sec 1.}

In this paper, we are concerned with the Cauchy problem for the inhomogeneous biharmonic nonlinear Schr\"{o}dinger (IBNLS) equation
\begin{equation} \label{GrindEQ__1_1_}
\left\{\begin{array}{l} {iu_{t} \pm \Delta^{2}u=\lambda|x|^{-b} |u|^{\sigma} u,~(t,x)\in \mathbb R\times \mathbb R^{d},}\\
{u(0,x)=u_{0}(x) \in H^{s}(\mathbb R^{d})}, \end{array}\right.
\end{equation}
where $d\in \mathbb N$, $s\ge0$, $0<b<4$, $\lambda\in \mathbb C$ and $0<\sigma<\infty$.
The case $b=0$ is the well-known classic biharmonic nonlinear Schr\"{o}dinger equation (also called fourth-order NLS equation), which has been widely studied during the last two decades. See, for example, \cite{D18B,D21,D22,K96,KS97,LZ21,MAK24} and the references therein.
Recently, the inhomogeneous biharmonic NLS equation \eqref{GrindEQ__1_1_} with $b>0$ has also attracted a lot of interest.
The local well-posedness as well as the small data global well-posedness and scattering in $H^s$ with $s\ge 0$ were studied by \cite{AKR22,AKR24,ARK23,GP20,LZ212,SP23,SG24}.
Meanwhile, the global existence/scattering and blow-up of $H^2$-solution have also been studied by several authors. See, for example, \cite{BMS23,BS23,CG21, CGP22, GP22, LZ212, S21,S22,SG22}.

The IBNLS equation \eqref{GrindEQ__1_1_} is invariant under scaling $u_{\alpha}(t,x)=\alpha^{\frac{4-b}{\sigma}}u(\alpha^{4}t,\alpha x ),~\alpha >0$.
An easy computation shows that
\begin{equation} \nonumber
\left\|u_{\alpha}(t)\right\|_{\dot{H}^{s}}=\alpha^{s+\frac{4-b}{\sigma}-\frac{d}{2}}\left\|u(t)\right\|_{\dot{H}^{s}}.
\end{equation}
We thus define the critical Sobolev index
\begin{equation} \nonumber
s_{c}:=\frac{d}{2}-\frac{4-b}{\sigma}.
\end{equation}
Putting
\begin{equation} \label{GrindEQ__1_2_}
\sigma_{c}(s):=
\left\{\begin{array}{cl}
{\frac{8-2b}{d-2s},} ~&{{\rm if}~s<\frac{d}{2},}\\
{\infty,}~&{{\rm if}~s\ge \frac{d}{2},}
\end{array}\right.
\end{equation}
we can easily see that $s>s_{c}$ is equivalent to $\sigma<\sigma_{c}(s)$. If $s<\frac{d}{2}$, then $s=s_{c}$ is equivalent to $\sigma=\sigma_{c}(s)$.

For initial data $u_{0}\in H^{s}(\mathbb R^{d})$, we say that the Cauchy problem \eqref{GrindEQ__1_1_} is $H^{s}$-critical (for short, critical) if $0\le s<\frac{d}{2}$ and $\sigma=\sigma_{c}(s)$.
If $s\ge 0$ and $\sigma<\sigma_{c}(s)$, then the problem \eqref{GrindEQ__1_1_} is said to be  $H^{s}$-subcritical (for short, subcritical).

Throughout the paper, $(q,r)$ is said to be biharmonic admissible (for short, $(q,r)\in B$) if
\begin{equation} \label{GrindEQ__1_3_}
\left\{\begin{array}{ll}
{2\le r\le\frac{2d}{d-4}},~&{{\rm if}~d\ge 5,}\\
{2\le r<\infty,}~&{{\rm if}~d\le 4,}
\end{array}\right.
\end{equation}
and
\begin{equation} \label{GrindEQ__1_4_}
\frac{4}{q} =\frac{d}{2} -\frac{d}{r} .
\end{equation}

In addition, $(q,r)\in B_0$ means that $(q,r)\in B$ with $\max\{\frac{d-4}{2d},0\}<\frac{1}{r}<\frac{1}{2}$.

A pair $(q,r)$ is said to be Schr\"{o}dinger admissible (for short, $(q,r)\in S$) if
\begin{equation} \label{GrindEQ__1_5_}
\left\{\begin{array}{ll}
{2\le r\le\frac{2d}{d-2}},~&{{\rm if}~d\ge 3,}\\
{2\le r<\infty,}~&{{\rm if}~d\le 2,}
\end{array}\right.
\end{equation}
and
\begin{equation} \label{GrindEQ__1_6_}
\frac{2}{q} =\frac{d}{2} -\frac{d}{r} .
\end{equation}

In this paper, we focus on the $H^s$-critical case, i.e. $\sigma=\frac{8-2b}{d-2s}$ with $0<s<\frac{d}{2}$.
This case was already studied by \cite{AKR24,SP23,SG24}. They studied the local well-posedness as well as the small data global well-posedness in $H^s$ by using different methods from each other. Saanouni-Peng \cite{SP23} used the Strichartz estimates in some weighted Lebesgue spaces adapted to handle the inhomogeneous term $|x|^{-b}$. And Saanouni-Ghanmi \cite{SG24} used the fractional Hardy type inequality, following the idea of \cite{AK23}.
Recently, the authors in \cite{AKR24} give the unified proof for both of $H^s$-subcritical case and $H^s$-subcritical case by using the Sobolev-Lorentz spaces theory .
More precisely, they proved that the Cauchy problem \eqref{GrindEQ__1_1_} with $d\in \mathbb N$, $0\le s<\min\left\{2+\frac{d}{2},d\right\}$ , $0<b<\min \left\{4,\; d-s,\; 2+\frac{d}{2}-s \right\}$ and $0<\sigma\le \sigma_{c}(s)$ is locally well-posed in $H^s$ if either $\sigma$ is an even integer or
\footnote[1]{~For $s\in \mathbb R$, $\left\lceil s\right\rceil$ denotes the minimal integer which is larger than or equal to $s$}
\begin{equation}\label{GrindEQ__1_7_}
\sigma>\left\lceil s\right\rceil-1.
\end{equation}

The condition \eqref{GrindEQ__1_7_} was assumed to ensure the regularity of the nonlinear term $|u|^{\sigma} u$ when $\sigma$ is not an even integer.
Obviously, this condition becomes trivial when $s\le 1$. However, when $s>1$, this condition is not so good as in $H^s$-subcritical case obtained by \cite{AKR22,ARK23,LZ212} and lead to restrictive assumption on $d$ and $b$.
In particular, when we study the $H^2$-theory for \eqref{GrindEQ__1_1_}, this condition leads to too restrictive assumption: $d\le 11$ and $b<6-\frac{d}{2}$ .

The main purpose of this paper is to improve the condition \eqref{GrindEQ__1_7_} in the $H^s$-critical case with $s>1$.

As mentioned above, in the $H^s$-subcritical case $\sigma<\sigma_{\rm c}(s)$, the condition \eqref{GrindEQ__1_7_} was already improved by \cite{AKR22,ARK23,LZ212}. However, the methods of \cite{AKR22,ARK23,LZ212} cannot be applied in the $H^s$-critical case.
To overcome this difficulty, we apply the theory of Besov-Lorentz spaces and Triebel-Lizorkin-Lorentz spaces to derive the regular Strichartz estimates for the biharmonic Schr\"{o}dinger semi-group $S(t)=e^{\pm it\Delta^{2} }$ in the framework of Lorentz-type spaces.
Using this regular Strichartz estimates and the contraction mapping principle, we have the following improved result about the local well-posedness as the small data global well-posedness and scattering in $H^s$ for the critical IBNLS equation \eqref{GrindEQ__1_1_}.

\begin{theorem}[Well-posedness in $H^s$]\label{thm 1.1.}
Let $d\ge 3$, $1\le s<\frac{d}{2}$ , $0<b<\min \left\{4,2+\frac{d}{2}-s \right\}$ and $\sigma=\frac{8-2b}{d-2s}$. If $\sigma$ is not an even integer, assume further
\begin{equation} \label{GrindEQ__1_8_}
\sigma>\lceil s\rceil-2.
\end{equation}
Then for any $u_{0}\in H^{s}(\mathbb R^{d}) $, there exist $T_{\max }=T_{\max }(u_{0})>0$ and $T_{\min }=T_{\min }(u_{0})>0$ such that \eqref{GrindEQ__1_1_} has a unique, maximal solution satisfying
\begin{equation} \nonumber
u\in C(\left(-T_{\min } ,T_{\max } \right),H^{s} )\cap L^{q}_{\rm loc}((-T_{\min } ,T_{\max } ),H_{r,2}^{s} ).
\end{equation}
for any  $(q,r)\in B$.
If $\left\| u_{0} \right\| _{\dot{H}^{s}} $ is sufficiently small, then the above solution is global and scatters.
\end{theorem}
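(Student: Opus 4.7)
The plan is to run a contraction mapping argument on the Duhamel formulation
\[
\Phi(u)(t)=S(t)u_{0}-i\lambda\int_{0}^{t}S(t-\tau)\bigl(|x|^{-b}|u|^{\sigma}u\bigr)(\tau)\,d\tau,\qquad S(t)=e^{\mp it\Delta^{2}},
\]
in a complete metric space modelled on the Lorentz-type Strichartz norms built in the earlier sections of the paper. For a scaling-determined biharmonic admissible pair $(q,r)\in B_{0}$, the natural candidate is
\[
X_{T}=\bigl\{\,u\in L^{\infty}_{T}H^{s}\cap L^{q}_{T}H^{s}_{r,2}\,:\,\|u\|_{L^{\infty}_{T}H^{s}}+\|u\|_{L^{q}_{T}H^{s}_{r,2}}\leq M\,\bigr\},
\]
endowed with the weaker metric $d(u,v)=\|u-v\|_{L^{q}_{T}L^{r,2}}$ so that the difference estimate does not require any derivatives of the nonlinearity.

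First I would apply the regular Strichartz estimates for $S(t)$ in Sobolev-Lorentz spaces (established earlier) to control the linear term by $\|u_{0}\|_{H^{s}}$ and, through the inhomogeneous Strichartz estimate, reduce the nonlinear part to a single bound of the form
\[
\bigl\||x|^{-b}|u|^{\sigma}u\bigr\|_{L^{\tilde q'}_{T}H^{s}_{\tilde r',2}}\lesssim\|u\|^{\sigma+1}_{L^{q}_{T}H^{s}_{r,2}}
\]
for a suitable dual admissible pair $(\tilde q,\tilde r)$. To establish this estimate I would (i) factor the weight $|x|^{-b}\in L^{d/b,\infty}$ out by H\"older's inequality in Lorentz spaces, (ii) apply a fractional product/chain rule in the Triebel-Lizorkin-Lorentz (or Besov-Lorentz) scale to $|u|^{\sigma}u$, and (iii) redistribute the final exponents via Sobolev embedding in Lorentz spaces. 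The criticality relation $\sigma=\frac{8-2b}{d-2s}$ is precisely what makes the exponent bookkeeping close.

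The principal obstacle, and the source of the improvement over \cite{AKR24}, is the chain rule under the relaxed regularity \eqref{GrindEQ__1_8_}. For non-integer $\sigma\in(\lceil s\rceil-2,\lceil s\rceil-1]$ the nonlinearity $F(z)=|z|^{\sigma}z$ is of class $C^{\lceil s\rceil-1}$ but not $C^{\lceil s\rceil}$, so the Christ--Weinstein chain rule in the Sobolev scale $W^{s,p}$ is unavailable. The remedy is to exploit the finite-difference characterization of Besov-Lorentz spaces, schematically
\[
\|f\|_{B^{s}_{r,2;q}}\simeq\|f\|_{L^{r,2}}+\sum_{|\alpha|=\lceil s\rceil-1}[D^{\alpha}f]_{B^{s-\lceil s\rceil+1}_{r,2;q}},
\]
whose seminorm is built on first-order differences of $(\lceil s\rceil-1)$-th derivatives and hence demands only $C^{\lceil s\rceil-1}$ regularity of $F$ together with H\"older continuity of exponent $\sigma-(\lceil s\rceil-2)>0$ of the top derivative---which is exactly what \eqref{GrindEQ__1_8_} supplies. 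One then returns to the Sobolev-Lorentz scale via the Besov-Lorentz / Triebel-Lizorkin-Lorentz embeddings developed earlier.

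Once the nonlinear estimate is in place, contraction of $\Phi$ on $X_{T}$ reduces to making $\|S(\cdot)u_{0}\|_{L^{q}_{T}H^{s}_{r,2}}$ sufficiently small: for the local theorem this follows by dominated convergence as $T\to 0$ from the globally finite Strichartz norm of the linear evolution, while the small-data global and scattering statements are obtained on $T=\infty$ directly from the global Strichartz bound, with scattering following from the Cauchy property of $\{S(-t)u(t)\}$ in $H^{s}$ as $|t|\to\infty$ via the global nonlinear bound.
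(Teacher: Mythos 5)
There is a genuine gap in your proposal: you have the contraction-mapping framework right, but you have misidentified the mechanism that makes the improved regularity assumption \eqref{GrindEQ__1_8_} possible, and the substitute you propose does not reach the stated threshold.

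The decisive point in the paper is that the \emph{regular} Strichartz estimates (Proposition \ref{regulS} and Corollary \ref{cor 3.13.}) \emph{gain derivatives}. Concretely, for a biharmonic admissible pair $(\bar q,\bar r)$ and a Schr\"odinger admissible pair $(\tilde q,\tilde r)$, Corollary \ref{cor 3.13.} reads
$$
\Bigl\|\int_{0}^{t}S(t-\tau)f(\tau)\,d\tau\Bigr\|_{L^{\bar q}(\mathbb R,\dot H^{s}_{\bar r,2})}\lesssim \|f\|_{L^{\tilde q'}(\mathbb R,\dot H^{\,s-2/\tilde q}_{\tilde r',2})},
$$
and the paper takes $\tilde q=2$, so the right-hand side sits at regularity $s-1$. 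This is exactly what Lemma \ref{lem 4.1.} exploits: the nonlinearity $|x|^{-b}|u|^{\sigma}u$ is estimated in $\dot H^{s-1}_{n_0',2}$, not in $\dot H^{s}$. Applying the fractional chain rule (Corollary \ref{cor 2.7.}) at level $s-1$ requires $\sigma>\lceil s-1\rceil-1$, which is precisely $\sigma>\lceil s\rceil-2$. In your writeup the target of the inhomogeneous Strichartz estimate is $\||x|^{-b}|u|^{\sigma}u\|_{L^{\tilde q'}_T H^{s}_{\tilde r',2}}$, i.e.\ still at regularity $s$; this is the standard (non-regular) Strichartz scheme, and it is exactly the route that forces the stronger hypothesis $\sigma>\lceil s\rceil-1$ already present in \cite{AKR24}.

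Your proposed remedy---replacing the Christ--Weinstein chain rule at level $s$ by a finite-difference Besov-Lorentz chain rule exploiting that $F(z)=|z|^{\sigma}z$ is $C^{\lceil s\rceil-1}$ with H\"older top derivative---does not close the gap. Even the sharpest Besov chain rules for $|z|^{\sigma}z$ at regularity $s$ need the $(\lceil s\rceil-1)$-th derivative $F^{(\lceil s\rceil-1)}(z)\sim |z|^{\sigma-\lceil s\rceil+2}$ to be H\"older of order at least $s-\lceil s\rceil+1$, which amounts to $\sigma\ge s-1$. For integer $s$ this is $\sigma\ge s-1=\lceil s\rceil-1$, no better than before; for non-integer $s$ it gives $\sigma\ge s-1$, which is strictly larger than $\lceil s\rceil-2$. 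Put differently, the full unit of improvement in \eqref{GrindEQ__1_8_} cannot come from a sharper chain rule at level $s$; it comes from never having to differentiate the nonlinearity $s$ times in the first place, thanks to the smoothing in Corollary \ref{cor 3.13.}. Once you replace your step (ii)--(iii) by the paper's scheme (Lemma \ref{lem 4.1.} at level $s-1$ together with the $L^{r,2}$-level Lemma \ref{lem 4.2.} to run the contraction in the weak metric), the rest of your outline (H\"older in Lorentz spaces for $|x|^{-b}$, Sobolev-Lorentz embeddings, small-time continuity of the free Strichartz norm, small-data global argument and scattering) is consistent with the paper's proof.
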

\begin{remark}\label{rem 1.2.}
\textnormal{
The authors in \cite{AKR24} established the well-posedness under the assumption that $0<b<\min \left\{4, d-s, 2+\frac{d}{2}-s \right\}$ and \eqref{GrindEQ__1_7_} are satisfied. Obviously, Theorem \ref{thm 1.1.} improves these conditions.
}\end{remark}
As an immediate consequence of Theorem \ref{thm 1.1.}, we have the following well-posedness result in the energy space $H^2$.
\begin{corollary}[Well-posedness in $H^2$]\label{cor 1.3.}
Let $d\ge 5$, $0<b<\min \left\{4,\frac{d}{2}\right\}$ and $\sigma=\frac{8-2b}{d-4}$. Then for any $u_{0}\in H^{2}(\mathbb R^{d}) $, there exist $T_{\max }=T_{\max }(u_{0})>0$ and $T_{\min }=T_{\min }(u_{0})>0$ such that \eqref{GrindEQ__1_1_} has a unique, maximal solution satisfying
\begin{equation} \nonumber
u\in C(\left(-T_{\min } ,T_{\max } \right),H^{2} )\cap L^{q}_{\rm loc}((-T_{\min } ,T_{\max } ),H_{r,2}^{2} ).
\end{equation}
for any $(q,r)\in B$.
Moreover, if $\left\|u_{0}\right\|_{\dot{H}^{2}}$ is sufficiently small, then the above solution is global and scatters.
\end{corollary}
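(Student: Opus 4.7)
The plan is to deduce the corollary directly from Theorem \ref{thm 1.1.} by specializing to $s=2$, so the task reduces to verifying that every hypothesis of the theorem is met under the assumptions of the corollary. First I would check the parameter ranges: $s=2$ satisfies $1\le s<\frac{d}{2}$ precisely because $d\ge 5$ forces $\frac{d}{2}>2$; the condition $d\ge 3$ is then automatic. Next, with $s=2$ the upper bound on $b$ in Theorem \ref{thm 1.1.} becomes $\min\{4,\,2+\frac{d}{2}-2\}=\min\{4,\frac{d}{2}\}$, which is exactly the range assumed in the corollary, and the critical exponent becomes $\sigma=\frac{8-2b}{d-2s}=\frac{8-2b}{d-4}$, matching the statement.

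The only nontrivial hypothesis to inspect is the regularity assumption \eqref{GrindEQ__1_8_} on the nonlinearity when $\sigma$ is not an even integer. Here, since $s=2$, we have $\lceil s\rceil-2=0$, so \eqref{GrindEQ__1_8_} reduces to $\sigma>0$, which is automatic from $b<4$. Consequently, no additional restriction on $\sigma$ is needed, and the distinction between even integer and non-even integer $\sigma$ becomes immaterial.

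Once these checks are in place, Theorem \ref{thm 1.1.} applies verbatim with $s=2$, yielding the existence of a unique maximal solution $u\in C((-T_{\min},T_{\max}),H^2)\cap L^q_{\rm loc}((-T_{\min},T_{\max}),H^2_{r,2})$ for every biharmonic admissible pair $(q,r)\in B$, together with the small data global existence and scattering in $\dot H^2$. There is no real obstacle: the whole content of the corollary is the observation that the improvement from \eqref{GrindEQ__1_7_} to \eqref{GrindEQ__1_8_} exactly removes the regularity barrier that in earlier works \cite{AKR24} forced the restrictions $d\le 11$ and $b<6-\frac{d}{2}$ at the energy level, so the $H^2$ theory now holds on the full natural range $d\ge 5$, $0<b<\min\{4,\frac{d}{2}\}$.
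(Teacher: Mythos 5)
Your proof is correct and follows the same route the paper takes: the corollary is exactly Theorem \ref{thm 1.1.} specialized to $s=2$, with the observation that $\lceil 2\rceil -2 = 0$ makes condition \eqref{GrindEQ__1_8_} vacuous, and the parameter ranges collapse to the ones stated. This matches the paper's treatment, where Corollary \ref{cor 1.3.} is presented as an immediate consequence of Theorem \ref{thm 1.1.} and Remark \ref{rem 1.4.} makes precisely the same two observations about $d\ge 5$ and the triviality of \eqref{GrindEQ__1_8_}.
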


\begin{remark}\label{rem 1.4.}
\textnormal{\begin{itemize}
              \item In Corollary \ref{cor 1.3.}, the restriction $d\ge 5$ is natural, since Corollary \ref{cor 1.3.} only covers the $H^2$-critical case. Furthermore, the assumption \eqref{GrindEQ__1_8_} becomes trivial, due to the fact that $s=2$.
              \item Corollary \ref{cor 1.3.} also improves the results of \cite{SP23,SG24} by extending the validity of $d$ and $b$. In fact, Saanouni-Ghanmi \cite{SG24} assumed $5\le d\le 11$ and $0<b<\min \left\{4,\frac{d}{2},\frac{12+d}{d}\right\}$. And Saanouni-Peng \cite{SP23} assumed $0<b<\min \left\{4,\frac{d}{2},\frac{d(4+d)}{4(d-2)}\right\}$.
            \end{itemize}
}\end{remark}

We also have the following well-posedness result in the homogeneous Sobolev spaces $\dot{H}^s$.
\begin{theorem}[Well-posedness in $\dot{H}^s$]\label{thm 1.5.}
Let $d\ge 3$, $1\le s<\frac{d}{2}$ , $0<b<\min \left\{4,2+\frac{d}{2}-s \right\}$ and $\sigma=\frac{8-2b}{d-2s}$. If $\sigma$ is not an even integer, assume further
\begin{equation} \label{GrindEQ__1_9_}
\sigma>\lceil s\rceil-1.
\end{equation}
Then for any $u_{0}\in \dot{H}^{s}(\mathbb R^{d}) $, there exist $T_{\max }=T_{\max }(u_{0})>0$ and $T_{\min }=T_{\min }(u_{0})>0$ such that \eqref{GrindEQ__1_1_} has a unique, maximal solution satisfying
\begin{equation} \nonumber
u\in C(\left(-T_{\min } ,T_{\max } \right),\dot{H}^{s} )\cap L^{q}_{\rm loc}((-T_{\min } ,T_{\max } ),\dot{H}_{r,2}^{s} ).
\end{equation}
for any  $(q,r)\in B$.
If $\left\| u_{0} \right\| _{\dot{H}^{s}} $ is sufficiently small, then the above solution is global and scatters.
\end{theorem}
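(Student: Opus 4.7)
My plan is to adapt the contraction mapping argument used for Theorem \ref{thm 1.1.} to the homogeneous setting, replacing $H^s_{r,2}$ by $\dot H^s_{r,2}$ throughout. Fix a pair $(q,r)\in B_0$ chosen so that $\dot H^s_{r,2}$ embeds into a Lorentz space $L^{\tilde r,2}$ whose scaling matches the nonlinearity $|x|^{-b}|u|^\sigma u$ with $\sigma=\sigma_c(s)$, and for a time interval $I\ni 0$ introduce the complete metric space
\[
X_I=\Bigl\{u\in C(I,\dot H^s)\cap L^q(I,\dot H^s_{r,2}):\ \|u\|_{L^\infty_t\dot H^s}+\|u\|_{L^q_t\dot H^s_{r,2}}\le M\Bigr\}
\]
with distance $d(u,v)=\|u-v\|_{L^q_t L^{r,2}}$. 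I would show that the Duhamel map
\[
\Phi(u)(t)=S(t)u_0\mp i\lambda\int_0^t S(t-\tau)\bigl(|x|^{-b}|u|^\sigma u\bigr)(\tau)\,d\tau
\]
sends $X_I$ into itself and is a contraction, provided $|I|$ is small (for local existence) or $\|u_0\|_{\dot H^s}$ is small (for the global/scattering statement). The free term is handled by the regular Strichartz estimates in Lorentz-type spaces derived earlier in the paper, which give $\|S(t)u_0\|_{L^\infty_t\dot H^s\cap L^q_t\dot H^s_{r,2}}\lesssim \|u_0\|_{\dot H^s}$ in $\dot H^s$ as well.

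The core of the argument is the nonlinear estimate. After applying the inhomogeneous Strichartz estimate with dual pair $(q',r')$, I need to bound $\||x|^{-b}|u|^\sigma u\|_{L^{q'}_t\dot H^s_{r',2}}$. The strategy is to use a Leibniz/chain rule in Lorentz spaces for the fractional derivative $(-\Delta)^{s/2}$ applied to the product, estimating $|x|^{-b}$ via its membership in the weak-Lorentz space $L^{d/b,\infty}$ and estimating $|u|^\sigma u$ via Hölder in Lorentz spaces after Sobolev embedding $\dot H^s_{r,2}\hookrightarrow L^{\tilde r,2}$. The regularity condition \eqref{GrindEQ__1_9_} enters precisely at the step where the fractional chain rule is applied to $F(u)=|u|^\sigma u$: when $\sigma$ is not an even integer, this requires $F\in C^{\lceil s\rceil}$, i.e.\ $\sigma>\lceil s\rceil-1$. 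The scaling identity $s=s_c$ will force a clean exponent balance of the form
\[
\||x|^{-b}|u|^\sigma u\|_{L^{q'}_t\dot H^s_{r',2}}\lesssim \|u\|_{L^a_t L^{\tilde r,2}}^\sigma\,\|u\|_{L^q_t\dot H^s_{r,2}}\lesssim \|u\|_{L^q_t\dot H^s_{r,2}}^{\sigma+1},
\]
and an analogous Lipschitz difference estimate for contractivity in the $L^q_t L^{r,2}$ metric.

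The main obstacle will be this weighted nonlinear estimate: the factor $|x|^{-b}$ is singular and does not itself lie in $\dot H^s$, so the fractional derivative cannot be distributed naively. The way around this, following the philosophy of the paper, is to use the Besov-Lorentz/Triebel-Lizorkin-Lorentz characterization of $\dot H^s_{r,2}$ together with a Littlewood-Paley frequency decomposition, which in effect converts the weighted estimate into a paraproduct decomposition whose factors can be separately controlled by Hölder in Lorentz spaces and the fractional Hardy-Sobolev inequality. Once this estimate is established with a constant depending only on the relevant Strichartz norms, the contraction mapping principle yields existence, uniqueness, and continuous dependence on a maximal interval $(-T_{\min},T_{\max})$; the small-data global claim follows on $I=\mathbb R$ since the Strichartz norm of the linear evolution is globally finite, and scattering in $\dot H^s$ is then deduced from the fact that $S(-t)u(t)$ becomes Cauchy in $\dot H^s$ as $t\to\pm\infty$, since the $L^q_t\dot H^s_{r,2}$ norm of the Duhamel term over $(t,\pm\infty)$ tends to zero.
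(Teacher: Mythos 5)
Your proposal transplants the contraction scheme of Theorem \ref{thm 1.1.} but keeps the metric $d(u,v)=\|u-v\|_{L^q_t L^{r,2}}$, and this does not work in the homogeneous setting. If $u_0\in \dot H^s\setminus L^2$ then already the linear flow $S(t)u_0$ fails to lie in $L^q_tL^{r,2}$: the Strichartz estimate of Lemma \ref{lem 2.9.} gives $S(t)u_0\in L^q_t\dot H^s_{r,2}$, not $L^q_tL^{r,2}$, and there is no Sobolev embedding from $\dot H^s_{r,2}$ to $L^{r,2}$. Consequently your $d$ is not finite-valued on $X_I$ and the contraction mapping principle cannot be applied. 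This is exactly why the paper defines the ball and the metric \emph{both} at the $\dot H^s_{r_0,2}$ level in Section \ref{sec 5.}: the space $D_3$ consists of $u$ with $\|u\|_{L^{q_0}(I,\dot H^s_{r_0,2})}\le M_3$ and $d_3(u,v)=\|u-v\|_{L^{q_0}(I,\dot H^s_{r_0,2})}$.

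This choice of metric is also what generates the hypothesis $\sigma>\lceil s\rceil-1$. Your account attributes it to the fractional chain rule on $F(u)=|u|^\sigma u$ at level $s$, but the paper does not estimate at level $s$; it uses the regular Strichartz estimate (Corollary \ref{cor 3.13.}) with the endpoint Schr\"odinger pair $(m_0,n_0)=(2,\tfrac{2d}{d-2})$ to place the nonlinearity in $\dot H^{s-1}_{n_0',2}$, where the forward estimate (Lemma \ref{lem 4.1.}) needs only $\sigma>\lceil s\rceil-2$. Working at level $s$ directly would, via $(-\Delta)^{s/2}|x|^{-b}$, require $b+s<d$, which is strictly stronger than $b<2+\tfrac{d}{2}-s$ when $d=3,4$; the one-derivative gain is precisely what lets the theorem reach those dimensions. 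The genuine source of $\sigma>\lceil s\rceil-1$ is the \emph{Lipschitz difference} estimate needed to make $\mathscr T$ a contraction in the $\dot H^{s-1}$-norm: Lemma \ref{lem 5.1.} requires $\sigma>\lceil s\rceil$ for a difference bound in $\dot H^s_{p,2}$, and applying it at level $s-1$ gives $\sigma>\lceil s\rceil-1$ (that this is strictly stronger than Theorem \ref{thm 1.1.}'s $\lceil s\rceil-2$ is the price of not having an $L^2$-level metric available). So the gap is not cosmetic: you must replace your metric by a derivative-level one, and then supply the harder Lipschitz estimate (Lemmas \ref{lem 5.1.}--\ref{lem 5.2.}) rather than the mere H\"older difference bound of Lemma \ref{lem 4.2.}.
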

\begin{remark}\label{rem 1.6.}
\textnormal{One can easily verify that the result of Theorem \ref{thm 1.5.} still holds for the $H^s$-subcritical case $\sigma<\frac{8-2b}{d-2s}$ by modifying the proof in Section \ref{sec 5.} slightly.
}\end{remark}

This paper is organized as follows. In Section \ref{sec 2.}, we give some preliminary results related to our problem. In Section \ref{sec 3.}, we study the properties of Lorentz-type spaces and derive the regular Strichartz estimates for $S(t)=e^{\pm it\Delta^{2} }$ in the framework of Lorentz-type spaces. In Section \ref{sec 4.}, we prove Theorem \ref{thm 1.1.}. Theorem \ref{thm 1.5.} is proved in Section \ref{sec 5.}.

\section{Preliminaries}\label{sec 2.}

In this section, we give some preliminary results related to our problem.

First of all, we introduce some notation used in this paper.
Throughout the paper, $\mathscr{F}$ denotes the Fourier transform, and the inverse Fourier transform is denoted by $\mathscr{F}^{-1}$.
$C>0$ stands for a positive universal constant, which may be different at different places. $a\lesssim b$ means $a\le Cb$ for some constant $C>0$. $a\sim b$ expresses $a\lesssim b$ and $b\lesssim a$.
Given normed spaces $X$ and $Y$, $X\hookrightarrow Y$ means that $X$ is continuously embedded in $Y$.
For $p\in \left[1,\;\infty \right]$, $p'$ denotes the dual number of $p$, i.e. $1/p+1/p'=1$.

Let us recall the definitions and the properties of some function spaces.

As in \cite{BL76,T78}, for $s\in \mathbb R$, $1\le q\le\infty$ and normed space $X$, the space $\dot{l}_q^s(X)$ denotes the space of all sequences $a=\{a_k\}_{k\in \mathbb Z}\subset X$ such that
$$
\left\|a\right\|_{\dot{l}_q^s(X)}=\left(\sum_{k\in \mathbb Z}\left[2^{-ks}\left\|a_k\right\|_X\right]^q\right)^{\frac{1}{q}}<\infty,
$$
with usual modification when $q=\infty$.
In the case $X=\mathbb C$, we write $\dot{l}_q^s$ instead of $\dot{l}_q^s(\mathbb C)$.
Similarly, the space
the space $l_q^s(X)$ denotes the space of all sequences $a=\{a_k\}_{k=0}^{\infty}\subset X$ such that
$$
\left\|a\right\|_{l_q^s(X)}=\left(\sum_{k=0}^{\infty}\left[2^{-ks}\left\|a_k\right\|_X\right]^q\right)^{\frac{1}{q}}<\infty,
$$
with usual modification when $q=\infty$.

As in \cite{WHHG11}, for $s\in \mathbb R$ and $1<p<\infty $, we denote by $H_{p}^{s} (\mathbb R^{d} )$ and $\dot{H}_{p}^{s} (\mathbb R^{d} )$ the nonhomogeneous Sobolev space and homogeneous Sobolev space, respectively. The norms of these spaces are given as
$$
\left\| f\right\| _{H_{p}^{s} (\mathbb R^{d} )} =\left\| (I-\Delta)^{s/2} f\right\| _{L^{p} (\mathbb R^{d} )} , \;\left\| f\right\| _{\dot{H}_{p}^{s} (\mathbb R^{d} )} =\left\| (-\Delta)^{s/2} f\right\| _{L^{p} (\mathbb R^{d} )},
$$
where $(I-\Delta)^{s/2}f =\mathscr{F}^{-1} \left(1+|\xi|^{2} \right)^{s/2} \mathscr{F}f$ and $(-\Delta)^{s/2}f =\mathscr{F}^{-1} |\xi|^{s} \mathscr{F}f$. As usual, we abbreviate $H_{2}^{s} (\mathbb R^{d} )$ and $\dot{H}_{2}^{s} (\mathbb R^{d} )$ as $H^{s} (\mathbb R^{d} )$ and $\dot{H}^{s} (\mathbb R^{d} )$, respectively.

For $0<p,\; q\le \infty $, we denote by $L^{p,q} (\mathbb R^{d})$ the Lorentz space (cf. \cite{G14}).
The quasi-norms of these spaces are given by
$$
\left\|f\right\|_{L^{p,q} (\mathbb R^{d})}=\left(\int_{0}^{\infty}{\left(t^{\frac{1}{p}}f^{*}(t)\right)^{q}
\frac{dt}{t}}\right)^{\frac{1}{q}},~~\textnormal{when}~~0<q<\infty,
$$
$$
\left\|f\right\|_{L^{p,\infty} (\mathbb R^{d})}=\sup_{t>0}t^{\frac{1}{p}}f^{*}(t),~~\textnormal{when}~~q=\infty,
$$
where $f^{*}(t)=\inf\left\{\tau:M^{d}\left(\left\{x:|f(x)|>\tau\right\}\right)\le t\right\}$, with $M^{d}$ being the Lebesgue measure in $\mathbb R^{d}$. Note that $L^{p,q} \left(\mathbb R^{d}
\right)$ is a quasi-Banach space for $0<p,q\le \infty $.  When $1<p<\infty$ and $1\le q \le \infty$,  $L^{p,q} \left(\mathbb R^{d}
\right)$ can be turned into a Banach space via an equivalent norm.  In particular $L^{p,p} (\mathbb R^{d})=L^{p}
(\mathbb R^{d})$, while $L^{p,\infty}$ corresponds to weak $L^p$ space.
These spaces are natural in the context of \eqref{GrindEQ__1_1_} due to the fact that $|x|^{-b}\in L^{\frac{d}{b},\infty}(\mathbb R^d)$.
In general, we have the embedding $L^{p,q}\hookrightarrow L^{p,r}$ for $q<r$.
Note also that $\left\| \left|f\right|^{r} \right\| _{L^{p,q} } =\left\| f\right\| _{L^{pr,qr}}^{r}$ for $1\le p,\;r<\infty $, $1\le q\le \infty $.
We also have the following H\"{o}lder's inequality in Lorentz spaces.
\begin{lemma}[H\"{o}lder's inequality in Lorentz spaces, \cite{O63}]\label{lem 2.1.}
Let $1<p,p_{1},p_{2}<\infty $, $1\le q,q_{1} ,q_{2}\le \infty $ and
$$
\frac{1}{p}=\frac{1}{p_{1}} +\frac{1}{p_{2}} ,~\frac{1}{q}=\frac{1}{q_{1} } +\frac{1}{q_{2} }.
$$
The we have
$$
\left\| fg\right\| _{L^{p,q} } \lesssim \left\| f\right\|
_{L^{p_{1},q_{1} } } \left\| g\right\| _{L^{p_{2},q_{2} } } .
$$
\end{lemma}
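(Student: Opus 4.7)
The plan is to reduce the inequality to a one-variable Hölder inequality via the decreasing rearrangement. First I would establish the pointwise rearrangement bound
\[
(fg)^*(2t)\le f^*(t)\,g^*(t),\qquad t>0,
\]
which is standard: if $|f(x)g(x)|>f^*(t)g^*(t)$ then either $|f(x)|>f^*(t)$ or $|g(x)|>g^*(t)$, so by definition of the decreasing rearrangement the distribution function of $fg$ at the level $f^*(t)g^*(t)$ is at most $t+t=2t$, giving the claimed bound.

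Next, assuming first $q<\infty$, I would plug this into the Lorentz quasi-norm and change variables $u=2t$, obtaining
\[
\|fg\|_{L^{p,q}}^q=\int_0^\infty\!\bigl(u^{1/p}(fg)^*(u)\bigr)^q\,\frac{du}{u}\le 2^{q/p}\!\int_0^\infty\!\bigl(t^{1/p_1}f^*(t)\cdot t^{1/p_2}g^*(t)\bigr)^q\,\frac{dt}{t},
\]
where $1/p=1/p_1+1/p_2$ has been used to split the weight $t^{1/p}$. Then I would apply the classical Hölder inequality on the measure space $(\mathbb R_+,dt/t)$ with conjugate exponents $q_1/q$ and $q_2/q$; these are admissible because the hypothesis $1/q=1/q_1+1/q_2$ forces $q/q_1+q/q_2=1$. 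The resulting bound is exactly $\|f\|_{L^{p_1,q_1}}^q\|g\|_{L^{p_2,q_2}}^q$, up to the absolute constant $2^{q/p}$, and taking $q$-th roots finishes the main case.

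The boundary cases where one or more of $q,q_1,q_2$ equals $\infty$ are handled from the same rearrangement inequality by replacing the corresponding integral with a supremum and pulling it out; the matching of exponents carries over verbatim. The restriction $1<p,p_1,p_2<\infty$ is only needed so that the $L^{p,q}$ quasi-norms behave like honest (equivalent) Banach norms and no endpoint pathologies appear. The main conceptual obstacle is therefore the very first step, namely the rearrangement inequality $(fg)^*(2t)\le f^*(t)g^*(t)$; once this layer-cake union bound is in place, the remainder is a single application of the standard Hölder inequality in one real variable.
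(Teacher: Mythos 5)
Your proof is correct. The paper does not supply a proof for this lemma; it simply cites O'Neil's paper, so there is no internal argument to compare against. For what it is worth, your route -- the layer-cake/union bound giving $(fg)^*(2t)\le f^*(t)g^*(t)$, then a change of variables $u=2t$ and a single application of H\"older on $(\mathbb{R}_{+},dt/t)$ with conjugate exponents $q_1/q$ and $q_2/q$ -- is the standard modern textbook proof and is perfectly adequate here. O'Neil's original argument is somewhat different and sharper: it works with the Hardy maximal rearrangement $f^{**}(t)=\frac{1}{t}\int_0^t f^*(s)\,ds$, establishes $(fg)^{**}(t)\le f^{**}(t)g^{**}(t)$, and from that deduces the inequality with an explicit constant $p'$, and his framework also covers convolution. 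Your argument trades that sharpness for brevity and yields the constant $2^{1/p}$, which is all that is needed for the $\lesssim$ in the statement. Two small points to keep tidy: (i) the union bound step quietly uses $d_f(f^*(t))\le t$, which relies on right-continuity of the distribution function, and it is worth saying so; (ii) the case $q=\infty$ forces $q_1=q_2=\infty$ under the hypothesis $1/q=1/q_1+1/q_2$, and then the estimate is a direct consequence of the rearrangement bound with no H\"older step at all, so the ``pull out a supremum'' remark should be phrased to cover that degenerate case as well as the mixed cases $q<\infty$, $q_i=\infty$.
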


As in  \cite{AT21,AK232,AKR24,HYZ12}, for $s\in \mathbb R$, $1<p<\infty$ and $1\le q \le \infty$, the nonhomogeneous Sobolev-Lorentz space ${H}^{s}_{p,q}(\mathbb R^{d})$ is defined as the set of tempered distribution $f\in \mathscr{S}'(\mathbb R^d)$ such that $(I-\Delta)^{s/2}f \in L^{p,q}(\mathbb R^{d})$, equipped with the norm
$$\left\|f\right\|_{{H}^{s}_{p,q}(\mathbb R^{d})}=\left\|(I-\Delta)^{s/2}f \right\|_{L^{p,q}(\mathbb R^{d})}.$$
The homogeneous Sobolev-Lorentz space $\dot{H}^{s}_{p,q}(\mathbb R^{d})$ is defined as the set of equivalence classes of distribution $f\in \mathscr{S}'(\mathbb R^d)/{\mathscr{P}(\mathbb R^d)}$ such that $(-\Delta)^{s/2}f \in L^{p,q}(\mathbb R^{d})$, equipped with the norm
$$\left\|f\right\|_{\dot{H}^{s}_{p,q}(\mathbb R^{d})}=\left\|(-\Delta)^{s/2}f\right\|_{L^{p,q}(\mathbb R^{d})},$$
where $\mathscr{P}(\mathbb R^d)$ denotes the set of polynomials with $d$ variables.
\begin{lemma}[\cite{AK232}]\label{lem 2.2.}
Let $s\ge 0$, $1<p<\infty$ and $1\le q_{1}\le q_{2}\le \infty$. Then we have

$(a)$ $\dot{H}^{s}_{p,1}\hookrightarrow \dot{H}^{s}_{p,q_{1}} \hookrightarrow\dot{H}^{s}_{p,q_{2}} \hookrightarrow \dot{H}^{s}_{p,\infty}$,

$(b)$ $\dot{H}^{s}_{p,p}=\dot{H}^{s}_{p}$.
\end{lemma}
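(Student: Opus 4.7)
The plan is to reduce both parts directly to the corresponding scalar properties of the Lorentz spaces $L^{p,q}(\mathbb R^d)$ that were recalled in the paragraphs preceding the lemma. By the very definition of the homogeneous Sobolev-Lorentz norm, $\|f\|_{\dot{H}^{s}_{p,q}}=\|(-\Delta)^{s/2}f\|_{L^{p,q}}$, so the Riesz-type map $f\mapsto (-\Delta)^{s/2}f$ realizes $\dot{H}^{s}_{p,q}$ as an isometric copy of $L^{p,q}$ on the quotient $\mathscr{S}'(\mathbb R^d)/\mathscr{P}(\mathbb R^d)$. Consequently, any containment or equality between two Sobolev-Lorentz spaces with the same $s$ and $p$ is equivalent to the analogous statement for scalar Lorentz spaces.

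For part (a), I would invoke the standard nesting $L^{p,q_1}\hookrightarrow L^{p,q_2}$ for $1\le q_1\le q_2\le\infty$, which is a classical fact about Lorentz quasi-norms (and was explicitly noted in the text before the lemma, cf.\ \cite{G14}). Applying it to $g=(-\Delta)^{s/2}f$ gives
$$
\|f\|_{\dot{H}^{s}_{p,q_2}}=\|g\|_{L^{p,q_2}}\lesssim\|g\|_{L^{p,q_1}}=\|f\|_{\dot{H}^{s}_{p,q_1}},
$$
and the two extremal embeddings $\dot{H}^{s}_{p,1}\hookrightarrow\dot{H}^{s}_{p,q_1}$ and $\dot{H}^{s}_{p,q_2}\hookrightarrow\dot{H}^{s}_{p,\infty}$ are the special cases $q_1=1$ and $q_2=\infty$, yielding the whole chain.

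For part (b), the same isometry and the classical identity $L^{p,p}(\mathbb R^d)=L^p(\mathbb R^d)$ (also recalled above) give
$$
\|f\|_{\dot{H}^{s}_{p,p}}=\|(-\Delta)^{s/2}f\|_{L^{p,p}}=\|(-\Delta)^{s/2}f\|_{L^p}=\|f\|_{\dot{H}^{s}_p},
$$
so the two spaces coincide with equivalent norms.

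I do not anticipate a real obstacle: the only bookkeeping point is to make sure that $(-\Delta)^{s/2}$ is well-defined on $\mathscr{S}'/\mathscr{P}$ so that both the norm and the embeddings descend to equivalence classes of distributions, which is the standard framework for homogeneous spaces and is already built into the definition used in the text. Thus the lemma is essentially a one-line reduction to the scalar Lorentz-space facts, and no new analytic input beyond the quoted embeddings is needed.
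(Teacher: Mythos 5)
Your reduction is correct and is exactly the standard argument: since $\|f\|_{\dot H^s_{p,q}}=\|(-\Delta)^{s/2}f\|_{L^{p,q}}$ by definition, both parts follow immediately from the nesting $L^{p,q_1}\hookrightarrow L^{p,q_2}$ for $q_1\le q_2$ and the identity $L^{p,p}=L^p$. The paper states this lemma with a citation to \cite{AK232} rather than proving it, and the cited proof proceeds by the same one-line transference through $(-\Delta)^{s/2}$, so your approach matches.
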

\begin{lemma}[\cite{AKR24}]\label{lem 2.3.}
Let $s\ge 0$, $1<p<\infty $ and $1\le q\le \infty$. Then we have $H^{s}_{p,q}=L^{p,q}\cap \dot{H}^{s}_{p,q}$ with
\[\left\|f\right\|_{H^{s}_{p,q}}\sim \left\|f\right\|_{L^{p,q}}+\left\|f\right\|_{\dot{H}^{s}_{p,q}}.\]
\end{lemma}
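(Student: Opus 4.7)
The plan is to reduce the equivalence of norms to the $L^{p,q}$-boundedness of two Fourier multipliers: $m_1(\xi)=(1+|\xi|^2)^{-s/2}$ and $m_2(\xi)=|\xi|^s(1+|\xi|^2)^{-s/2}$ (for the direction $H^s_{p,q}\hookrightarrow L^{p,q}\cap\dot H^s_{p,q}$), together with their counterparts for the reverse direction. The underlying mechanism is that any operator bounded on $L^{p_0}$ and $L^{p_1}$ for $1<p_0<p<p_1<\infty$ is automatically bounded on every Lorentz space $L^{p,q}$ with $1\le q\le\infty$ by the real-interpolation identification $(L^{p_0},L^{p_1})_{\theta,q}=L^{p,q}$; once this is established, the problem becomes a classical Mikhlin-type multiplier verification performed for $L^p$.

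First I would prove the embedding $H^s_{p,q}\hookrightarrow L^{p,q}\cap\dot H^s_{p,q}$. The bound $\|f\|_{L^{p,q}}\lesssim\|f\|_{H^s_{p,q}}$ follows by writing $f=m_1(D)(I-\Delta)^{s/2}f$, and noting that $m_1(\xi)=(1+|\xi|^2)^{-s/2}$ is $C^\infty$ and satisfies $|\partial^\alpha m_1(\xi)|\lesssim(1+|\xi|)^{-s-|\alpha|}\lesssim|\xi|^{-|\alpha|}$, hence is a classical $L^p$ Fourier multiplier, and the preceding interpolation argument extends this to $L^{p,q}$. The bound $\|(-\Delta)^{s/2}f\|_{L^{p,q}}\lesssim\|(I-\Delta)^{s/2}f\|_{L^{p,q}}$ is obtained from $m_2(\xi)=|\xi|^s(1+|\xi|^2)^{-s/2}$; away from the origin this symbol clearly satisfies Mikhlin's condition, and near the origin I would split via a Littlewood--Paley cutoff $\psi$ supported in $|\xi|\le 2$, treating $\psi(\xi)m_2(\xi)=\psi(\xi)|\xi|^s(1+|\xi|^2)^{-s/2}$ through a kernel estimate (the inverse Fourier transform of a compactly supported function whose only singular behaviour is $|\xi|^s$ at the origin produces an integrable kernel on $\mathbb R^d$ with $L^1$ norm bounded by a constant, giving $L^p$-boundedness for every $p$).

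For the reverse embedding, I would write the multiplier as
\[
(1+|\xi|^2)^{s/2}=\psi(\xi)(1+|\xi|^2)^{s/2}+(1-\psi(\xi))\frac{(1+|\xi|^2)^{s/2}}{|\xi|^s}\cdot|\xi|^s,
\]
where $\psi$ is the same cutoff as above. The first term gives a compactly-supported smooth symbol, hence a convolution with a Schwartz kernel, bounded on $L^{p,q}$ and controlled by $\|f\|_{L^{p,q}}$. The second term, applied to $f$, equals $n(D)(-\Delta)^{s/2}f$ where $n(\xi)=(1-\psi(\xi))(1+|\xi|^2)^{s/2}|\xi|^{-s}$; this symbol is $C^\infty$ (the singular factor $|\xi|^{-s}$ is killed on $\operatorname{supp}(1-\psi)$), bounded as $|\xi|\to\infty$, and satisfies the Mikhlin condition, yielding $L^{p,q}$-boundedness. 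Summing the two pieces gives $\|f\|_{H^s_{p,q}}\lesssim\|f\|_{L^{p,q}}+\|(-\Delta)^{s/2}f\|_{L^{p,q}}$.

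The main obstacle is the low-frequency behaviour of the symbol $|\xi|^s$ for non-integer $s$, which is only $C^{\lfloor s\rfloor}$ at the origin and hence does not satisfy Mikhlin's condition globally. The resolution, as indicated, is the Littlewood--Paley splitting: on the high-frequency piece the relevant symbols are genuine Mikhlin multipliers, while on the low-frequency piece one works directly with the convolution kernel, whose $L^1$-norm is finite because the low-frequency symbol lies in a compactly supported fractional Sobolev space of high enough order. Once the $L^p$-boundedness is in hand for a range of $p$ containing our fixed exponent, the passage to $L^{p,q}$ (and in particular to $q\ne p$) is automatic by real interpolation, finishing the proof.
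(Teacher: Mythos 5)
The paper does not prove this lemma itself; it is quoted from \cite{AKR24}, where the argument is exactly the one you outline: establish the $L^p$-boundedness of the multipliers $(1+|\xi|^2)^{-s/2}$, $|\xi|^s(1+|\xi|^2)^{-s/2}$ and $(1-\psi(\xi))(1+|\xi|^2)^{s/2}|\xi|^{-s}$ (plus the compactly supported low-frequency piece, whose kernel is in $L^1$ since $\sum_{j\le 1}2^{js}<\infty$ for $s>0$), and then transfer to $L^{p,q}$ via the real-interpolation identity $(L^{p_0},L^{p_1})_{\theta,q}=L^{p,q}$. Your proposal is correct and follows essentially this same standard route, so there is nothing to add beyond noting that the case $s=0$ is trivial and that membership of $f$ in $L^{p,q}$ fixes the representative of the class in $\mathscr{S}'/\mathscr{P}$, so the modulo-polynomials issue in the definition of $\dot H^s_{p,q}$ causes no trouble.
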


\begin{lemma}[\cite{AKR24}]\label{lem 2.4.}
Let $-\infty < s_{2} \le s_{1} <\infty $ and $1<p_{1} \le p_{2} <\infty $ with $s_{1} -\frac{d}{p_{1} } =s_{2} -\frac{d}{p_{2} } $. Then for any $1\le q\le \infty$, there holds the embeddings:
$$
\dot{H}_{p_{1},q}^{s_{1} } \hookrightarrow \dot{H}_{p_{2},q}^{s_{2}},~H_{p_{1},q}^{s_{1} } \hookrightarrow H_{p_{2},q}^{s_{2}}.
$$
\end{lemma}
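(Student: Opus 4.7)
The plan is to reduce both embeddings to the boundedness of the Riesz (resp.\ Bessel) potential in Lorentz spaces, and then invoke O'Neil's convolution inequality. Set $\alpha := s_1 - s_2 \ge 0$, so that the scaling hypothesis $s_1 - d/p_1 = s_2 - d/p_2$ rewrites as $1/p_2 = 1/p_1 - \alpha/d$. The case $\alpha = 0$ forces $p_1 = p_2$ and both embeddings are trivial, so from now on I assume $\alpha > 0$; the constraints $1 < p_1 \le p_2 < \infty$ then automatically give $0 < \alpha < d$.

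For the homogeneous embedding, given $u \in \dot{H}^{s_1}_{p_1, q}$, set $g := (-\Delta)^{s_1/2} u \in L^{p_1, q}(\mathbb{R}^d)$, so that
\[
(-\Delta)^{s_2/2} u \;=\; (-\Delta)^{-\alpha/2} g \;=\; c_\alpha \, |\cdot|^{-(d-\alpha)} * g
\]
is the Riesz potential $I_\alpha g$. Since $|x|^{-(d-\alpha)} \in L^{d/(d-\alpha), \infty}(\mathbb{R}^d)$, O'Neil's convolution inequality in Lorentz spaces (\cite{O63}) yields
\[
\|(-\Delta)^{s_2/2} u\|_{L^{p_2, q}} \;\lesssim\; \bigl\| |x|^{-(d-\alpha)} \bigr\|_{L^{d/(d-\alpha), \infty}} \, \|g\|_{L^{p_1, q}} \;\lesssim\; \|u\|_{\dot{H}^{s_1}_{p_1, q}},
\]
the Lorentz indices checking out because $1 + 1/p_2 = 1/p_1 + (d-\alpha)/d$ and $1/q \le 1/q + 1/\infty$. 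This proves $\dot{H}^{s_1}_{p_1, q} \hookrightarrow \dot{H}^{s_2}_{p_2, q}$.

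For the nonhomogeneous embedding I would replay the same argument with the Bessel potential $(I - \Delta)^{-\alpha/2}$ in place of the Riesz potential. Its convolution kernel $G_\alpha$ behaves like $|x|^{-(d-\alpha)}$ near the origin and decays exponentially at infinity, so it likewise lies in $L^{d/(d-\alpha), \infty}(\mathbb{R}^d)$; applying O'Neil's inequality to $(I - \Delta)^{s_2/2} u = G_\alpha * (I - \Delta)^{s_1/2} u$ then yields $\|u\|_{H^{s_2}_{p_2, q}} \lesssim \|u\|_{H^{s_1}_{p_1, q}}$. I expect the only real bookkeeping to be the endpoint cases $q = 1$ and $q = \infty$ in O'Neil's inequality together with the standard Bessel kernel pointwise bounds; conceptually the argument is very short once these are in hand.
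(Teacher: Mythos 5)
Your argument is correct. Note that the paper itself gives no proof of this lemma --- it is imported verbatim from \cite{AKR24} --- so there is nothing internal to compare against; your reduction to O'Neil's convolution inequality is one of the two standard routes. The exponent bookkeeping is right: with $\alpha=s_1-s_2$ the scaling hypothesis gives $1/p_2=1/p_1-\alpha/d$, the constraints $1<p_1\le p_2<\infty$ force $0\le\alpha<d$, the Riesz kernel $|x|^{-(d-\alpha)}$ lies in $L^{d/(d-\alpha),\infty}$, and O'Neil's theorem with second indices $(q,\infty)$ applies since $1/q\le 1/q+0$ and $1/p_2=1/p_1+(d-\alpha)/d-1>0$; the Bessel-kernel variant handles the nonhomogeneous case. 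The alternative route, which is what the cited source and the paper's own toolkit (cf.\ the real interpolation identity \eqref{GrindEQ__3_2_}) suggest, is to start from the classical Lebesgue-scale boundedness of $(-\Delta)^{-\alpha/2}\colon L^{p}\to L^{\tilde p}$ for two choices of $p$ bracketing $p_1$ and then real-interpolate, using $(L^{p_0,r_0},L^{p_1,r_1})_{\theta,q}=L^{p,q}$, to insert the second Lorentz index $q$. Both arguments prove the same statement; yours is more self-contained and gives the endpoint cases $q=1,\infty$ directly, while interpolation recycles the classical embedding as a black box. The only points worth spelling out in a full write-up are the justification of the composition identity $(-\Delta)^{s_2/2}u=c_\alpha\,|\cdot|^{-(d-\alpha)}*\bigl((-\Delta)^{s_1/2}u\bigr)$ modulo polynomials (a routine density argument in $\mathscr{S}'/\mathscr{P}$), and the corresponding Fourier-multiplier identity for the Bessel potential.
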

\begin{lemma}[Fractional product rule under Lorentz norms, \cite{CN16}]\label{lem 2.5.}
Let $s\ge 0$, $1<p,p_{1},p_{2},p_{3},p_{4} <\infty$ and $1\le q,q_{1},q_{2},q_{3},q_{4} \le \infty$. Assume that
\begin{equation}\nonumber
\frac{1}{p} =\frac{1}{p_{1} }+\frac{1}{p_{2}}=\frac{1}{p_{3} }+\frac{1}{p_{4}},~\frac{1}{q} =\frac{1}{q_{1} }+\frac{1}{q_{2}}=\frac{1}{q_{3} }+\frac{1}{q_{4}}.
\end{equation}
Then we have
\begin{equation}\nonumber
\left\|fg\right\| _{\dot{H}_{p,q}^{s} } \lesssim \left\|f\right\| _{\dot{H}_{p_1,q_1}^{s} } \left\| g\right\| _{L^{p_{2},q_{2}}} +\left\| f\right\| _{L^{p_{3},q_{3}}}
\left\| g\right\| _{\dot{H}_{p_4,q_4}^{s} }.
\end{equation}
\end{lemma}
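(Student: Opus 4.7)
The plan is to reduce the fractional Leibniz rule in Lorentz spaces to the Littlewood-Paley characterization of the homogeneous Sobolev--Lorentz spaces $\dot{H}^{s}_{p,q}$ and then run Bony's paraproduct decomposition. Namely, I would write
\begin{equation}\nonumber
fg=\Pi_{1}(f,g)+\Pi_{2}(f,g)+\Pi_{3}(f,g):=\sum_{j}S_{j-3}f\cdot\Delta_{j}g+\sum_{j}S_{j-3}g\cdot\Delta_{j}f+\sum_{|j-k|\le 2}\Delta_{j}f\cdot\Delta_{k}g,
\end{equation}
where $\Delta_{j}$, $S_{j}$ are the standard Littlewood--Paley operators. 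The key point is that $\Pi_{1}$ and $\Pi_{2}$ have spectral localization on a single dyadic annulus, so their $\dot{H}^{s}_{p,q}$--norm is controlled pointwise by a maximal function times the dyadic pieces of the other factor, while $\Pi_{3}$ is spectrally supported in a ball and must be handled with a summation in $j$.

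The first step is to establish the square function characterization
\begin{equation}\nonumber
\|u\|_{\dot{H}^{s}_{p,q}}\sim \Bigl\|\Bigl(\sum_{j\in\mathbb{Z}}2^{2js}|\Delta_{j}u|^{2}\Bigr)^{1/2}\Bigr\|_{L^{p,q}},\qquad 1<p<\infty,~1\le q\le \infty,~s\ge 0,
\end{equation}
which is the Triebel--Lizorkin--Lorentz realization alluded to in the introduction and which will be developed in Section \ref{sec 3.}. This step in turn rests on the vector-valued Fefferman--Stein maximal inequality in Lorentz spaces, $\|(\sum_{j}|Mf_{j}|^{r})^{1/r}\|_{L^{p,q}}\lesssim \|(\sum_{j}|f_{j}|^{r})^{1/r}\|_{L^{p,q}}$ for $1<p,r<\infty$, $1\le q\le \infty$, obtained by real interpolation from its classical $L^{p}$ counterpart.

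With this in hand, the estimate of $\Pi_{1}$ proceeds pointwise: since $S_{j-3}f\cdot\Delta_{j}g$ is Fourier-supported in the annulus $\{|\xi|\sim 2^{j}\}$, Lemma \ref{lem 2.1.} (Hölder in Lorentz) combined with $|S_{j-3}f|\lesssim Mf$ gives
\begin{equation}\nonumber
\|\Pi_{1}\|_{\dot{H}^{s}_{p,q}}\sim \Bigl\|\Bigl(\sum_{j}2^{2js}|S_{j-3}f|^{2}|\Delta_{j}g|^{2}\Bigr)^{1/2}\Bigr\|_{L^{p,q}}\lesssim \|Mf\|_{L^{p_{3},q_{3}}}\cdot\|g\|_{\dot{H}^{s}_{p_{4},q_{4}}}\lesssim \|f\|_{L^{p_{3},q_{3}}}\|g\|_{\dot{H}^{s}_{p_{4},q_{4}}},
\end{equation}
using the boundedness of $M$ on $L^{p_{3},q_{3}}$. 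The bound on $\Pi_{2}$ is symmetric and produces the $\|f\|_{\dot{H}^{s}_{p_{1},q_{1}}}\|g\|_{L^{p_{2},q_{2}}}$ term.

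The main obstacle is the diagonal paraproduct $\Pi_{3}$, where $\Delta_{j}f\cdot\Delta_{k}g$ is supported only in a ball of radius $\sim 2^{j}$, so that $\Delta_{\ell}\Pi_{3}$ receives contributions from all $j\gtrsim \ell$. My plan is to write $\Delta_{\ell}\Pi_{3}=\sum_{j\ge \ell-4}\Delta_{\ell}(\Delta_{j}f\cdot\widetilde{\Delta}_{j}g)$ with a thickened $\widetilde{\Delta}_{j}$, factor out $2^{(\ell-j)s}$ (which is summable in $j\ge \ell$ for $s>0$, and handled by a trivial shift when $s=0$), and apply the square function characterization together with Minkowski and Hölder in Lorentz spaces to reduce matters to the same product of one $\dot{H}^{s}_{p_{i},q_{i}}$ norm and one $L^{p_{k},q_{k}}$ norm. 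Because the spectral localization is lost here, extra care is needed so that the summation over $j$ stays controlled by the square function; this is the step where the Triebel--Lizorkin--Lorentz machinery, rather than the simpler $L^{p}$--Littlewood--Paley estimate, is essential.
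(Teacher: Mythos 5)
Your strategy is sound, but note first that the paper does not actually prove Lemma \ref{lem 2.5.}: it is quoted from \cite{CN16}, where the Kato--Ponce inequality is established in weighted Lebesgue spaces (with Muckenhoupt weights) by a paraproduct argument and then transferred to Lorentz and variable Lebesgue spaces by Rubio de Francia extrapolation. Your proposal replaces that weights-plus-extrapolation route by a direct paraproduct proof inside the Lorentz scale, resting on exactly the two tools one would expect: the identification $\dot H^{s}_{p,q}\simeq \dot F^{s}_{(p,q),2}$, which the paper itself establishes later as Corollary \ref{equHF} in Section \ref{sec 3.}, and the vector-valued Fefferman--Stein inequality in $L^{p,q}(\ell^{2})$, which indeed follows from its $L^{p_0}$ and $L^{p_1}$ instances via the real interpolation identity $(L^{p_0}(\ell^{2}),L^{p_1}(\ell^{2}))_{\theta,q}=L^{p,q}(\ell^{2})$. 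The treatment of $\Pi_{1},\Pi_{2}$ by the pointwise bound $|S_{j-3}f|\lesssim Mf$ together with Lemma \ref{lem 2.1.}, and of $\Pi_{3}$ by the $\ell^{1}$--$\ell^{2}$ convolution in the dyadic index, is the standard Coifman--Meyer/Bony scheme and goes through verbatim once those two ingredients are in place; so your route is a legitimate, self-contained alternative to the citation, and it has the merit of using only machinery the paper develops anyway. Two loose ends to tidy: (i) Corollary \ref{equHF} is stated only for $q<\infty$, so the cases where some $q_{i}=\infty$ (permitted by the lemma) require the corresponding endpoint of the square-function characterization, again obtainable by real interpolation with second index $\infty$; (ii) your ``trivial shift'' does not repair the divergence of $\sum_{j\ge\ell}2^{(\ell-j)s}$ in the resonant term when $s=0$ --- the correct observation is that for $s=0$ the entire statement reduces to O'Neil's inequality (Lemma \ref{lem 2.1.}) and the paraproduct decomposition is unnecessary.
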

\begin{lemma}[Fractional chain rule under Lorentz norms, \cite{AT21,AKR24}]\label{lem 2.6.}
Let $s>0$ and $F\in C^{\left\lceil s\right\rceil } \left(\mathbb C\to \mathbb C\right)$. Then for $1<p,p_{1_k},p_{2_k},p_{3_k}<\infty$ and $1\le q,q_{1_k},q_{2_k},q_{3_k}<\infty$ satisfying
\begin{equation} \nonumber
\frac{1}{p}=\frac{1}{p_{1_k}}+\frac{1}{p_{2_k}}+\frac{k-1}{p_{3_k}},
~\frac{1}{q}=\frac{1}{q_{1_k}}+\frac{1}{q_{2_k}}+\frac{k-1}{q_{3_k}},~k=1,2,\ldots, \lceil s\rceil,
\end{equation}
we have
\begin{equation} \nonumber
\left\|(-\Delta)^{s/2} F(u)\right\| _{L^{p,q}} \lesssim \sum_{k=1}^{\lceil s\rceil}\left\| F^{(k)}(u)\right\| _{L^{p_{1_k},q_{1_k}}} \left\|(-\Delta)^{s/2}u\right\|_{L^{p_{2_k},q_{2_k}}}\left\|u\right\|_{L^{p_{3_k},q_{3_k}}}^{k-1},
\end{equation}
where the $k$-th order derivative of $F(z)$ is defined under the identification $\mathbb C=\mathbb R^2$ $($see e.g. $\cite{AK23,AK232,AKR24}.)$
\end{lemma}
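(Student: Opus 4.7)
The plan is to promote the Lebesgue-space fractional chain rule (the standard extension of Christ--Weinstein that already produces the full sum over $k=1,\dots,\lceil s\rceil$) to the Lorentz setting. The argument splits naturally into a base case $s\in(0,1)$, handled via a kernel representation together with Lemma \ref{lem 2.1.}, and an induction on $m=\lceil s\rceil\ge 2$ driven by Lemma \ref{lem 2.5.}.

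For the base case only the $k=1$ summand appears. I would start from the singular integral formula
\[(-\Delta)^{s/2}F(u)(x) = c_{d,s}\,\mathrm{p.v.}\int \frac{F(u(x))-F(u(y))}{|x-y|^{d+s}}\,dy,\]
use the pointwise bound $|F(u(x))-F(u(y))|\lesssim \bigl(|F'(u(x))|+|F'(u(y))|\bigr)|u(x)-u(y)|$ coming from the mean value theorem, and, after splitting the integral into near and far regions, estimate the result pointwise by $M(F'(u))(x)\,M((-\Delta)^{s/2}u)(x)$, where $M$ is the Hardy--Littlewood maximal operator. Since $M$ is bounded on $L^{p,q}$ for $1<p<\infty$ and $1\le q\le\infty$, Lemma \ref{lem 2.1.} then delivers the desired estimate.

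For $m=\lceil s\rceil\ge 2$ I would set $s=1+s'$ with $s'\in[0,m-1]$, apply $\nabla$ to $F(u)$, use $\nabla F(u)=F'(u)\nabla u$, and invoke Lemma \ref{lem 2.5.} at regularity $s'$ to obtain
\[\|(-\Delta)^{s/2}F(u)\|_{L^{p,q}} \lesssim \|(-\Delta)^{s'/2}F'(u)\|_{L^{A,B}}\|\nabla u\|_{L^{C,D}} + \|F'(u)\|_{L^{A',B'}}\|(-\Delta)^{s'/2}\nabla u\|_{L^{C',D'}}.\]
Since the Riesz operator $\nabla(-\Delta)^{-1/2}$ is bounded on $L^{p,q}$, the second summand already sits at the $k=1$ slot of the target. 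The first summand is processed by applying the induction hypothesis to $F'$ at regularity $s'$, which expands $\|(-\Delta)^{s'/2}F'(u)\|_{L^{A,B}}$ as a sum of terms of the shape $\|F^{(j+1)}(u)\|\,\|(-\Delta)^{s'/2}u\|\,\|u\|^{j-1}$ for $1\le j\le m-1$; combining each with $\|\nabla u\|_{L^{C,D}}$ by Lemma \ref{lem 2.1.} and again using the Riesz identity produces precisely the terms with $k=2,\dots,m$.

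The main obstacle I anticipate is the bookkeeping of the two Lorentz indices $(p,q)$ through each nested application of Lemmas \ref{lem 2.1.} and \ref{lem 2.5.}: the prescribed scaling $\frac{1}{q}=\frac{1}{q_{1_k}}+\frac{1}{q_{2_k}}+\frac{k-1}{q_{3_k}}$ must be preserved while matching the Hölder splittings that appear inside the induction. The embedding $L^{p,r_1}\hookrightarrow L^{p,r_2}$ for $r_1\le r_2$, together with the freedom to choose intermediate Lorentz second indices, provides enough slack to close the induction; this reshuffling is the only genuinely delicate step in the argument.
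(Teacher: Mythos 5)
The paper does not prove Lemma \ref{lem 2.6.}: it is quoted with a citation to \cite{AT21,AKR24}, so there is no in-text argument to compare against. Judged on its own terms, though, your sketch has two genuine gaps.

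First, the base case does not follow from the claimed pointwise estimate. From the singular-integral representation and the bound $|F(u(x))-F(u(y))|\lesssim (|F'(u(x))|+|F'(u(y))|)\,|u(x)-u(y)|$ you obtain
\[
|(-\Delta)^{s/2}F(u)(x)|\lesssim \int \frac{\bigl(|F'(u(x))|+|F'(u(y))|\bigr)\,|u(x)-u(y)|}{|x-y|^{d+s}}\,dy,
\]
and there is no way to dominate this pointwise by $M(F'(u))(x)\,M((-\Delta)^{s/2}u)(x)$. Even the diagonal piece $|F'(u(x))|\int |u(x)-u(y)|\,|x-y|^{-d-s}\,dy$ is not controlled by $|F'(u(x))|\,M((-\Delta)^{s/2}u)(x)$, because the Gagliardo-type integral $\int |u(x)-u(y)|\,|x-y|^{-d-s}\,dy$ is not pointwise comparable to a maximal function of $(-\Delta)^{s/2}u$; and the off-diagonal piece involving $|F'(u(y))|$ is genuinely bilinear in $y$. (The pointwise inequality for $F$ you invoke also requires structure on $F'$ beyond $F\in C^{\lceil s\rceil}$, such as $F'$ being power-type, but that is a secondary point.) In the Lebesgue setting the Christ--Weinstein chain rule is proved by Littlewood--Paley/paraproduct arguments precisely because such a clean pointwise maximal bound is not available.

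Second, the inductive step does not close. After applying $\nabla$, Lemma \ref{lem 2.5.} at regularity $s'=s-1$, and the induction hypothesis to $F'$, the first summand contributes terms of the shape
\[
\|F^{(j+1)}(u)\|\,\|(-\Delta)^{s'/2}u\|\,\|u\|^{j-1}\,\|\nabla u\|,
\]
whereas the target term at level $k=j+1$ is $\|F^{(j+1)}(u)\|\,\|(-\Delta)^{s/2}u\|\,\|u\|^{j}$. The pair of factors $\|(-\Delta)^{s'/2}u\|\,\|\nabla u\|$ is not the same object as $\|(-\Delta)^{s/2}u\|\,\|u\|$, and ``again using the Riesz identity'' does not convert one product into the other: the Riesz transform relates $\nabla$ to $(-\Delta)^{1/2}$ on a single factor, not across a product. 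To trade regularity between the two factors you would need a Gagliardo--Nirenberg-type interpolation, but that imposes Sobolev-scaling relations among $p_{2_k}$, $p_{3_k}$ and the intermediate exponents that are not part of the lemma's hypotheses (which only impose H\"older scaling $\tfrac{1}{p}=\tfrac{1}{p_{1_k}}+\tfrac{1}{p_{2_k}}+\tfrac{k-1}{p_{3_k}}$). So the induction as set up produces an estimate of a different form, not the stated one. A workable route would either mimic the paraproduct proof directly on Lorentz-valued Littlewood--Paley pieces, or first establish the Lebesgue-space version and then transfer to Lorentz spaces by real interpolation of a suitably multilinearized operator; neither is the argument you wrote.
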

As an immediate consequence of Lemma \ref{lem 2.6.}, we have the following useful nonlinear estimates in Sobolev-Lorentz spaces $\dot{H}_{p,q}^{s}$ with $s\ge 0$.
\begin{corollary}\label{cor 2.7.}
Let $F(u)=|u|^{\sigma}u$ or $F(u)=|u|^{\sigma+1}$. Assume $s\ge 0$ and $\sigma >0$. If $\sigma$ is not an even integer, assume further $\sigma >\lceil s\rceil -1$. Then for $1<p,p_{1},p_{2}<\infty$ and $1\le q,q_{1},q_{2}<\infty$ satisfying
\begin{equation} \nonumber
\frac{1}{p}=\frac{\sigma}{p_{1}}+\frac{1}{p_{2}},~\frac{1}{q}=\frac{\sigma}{q_{1}}+\frac{1}{q_{2}},
\end{equation}
we have
\begin{equation} \nonumber
\left\|F(u)\right\| _{\dot{H}_{p,q}^{s}} \lesssim \left\|u\right\| _{L^{p_{1},q_{1}}}^{\sigma} \left\|u\right\|_{\dot{H}_{p_{2},q_{2}}^{s}}.
\end{equation}
\end{corollary}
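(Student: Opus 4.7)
The plan is to deduce the corollary directly from the fractional chain rule of Lemma \ref{lem 2.6.}. The key preliminary observation is that under the stated hypotheses the function $F(u)=|u|^{\sigma}u$ (or $|u|^{\sigma+1}$) lies in $C^{\lceil s\rceil}(\mathbb C\to\mathbb C)$ with the pointwise bound
\[
|F^{(k)}(u)|\lesssim |u|^{\sigma+1-k},\qquad k=1,\dots,\lceil s\rceil.
\]
When $\sigma$ is an even integer this is just the explicit Leibniz-type expansion. Otherwise, the assumption $\sigma>\lceil s\rceil-1$ forces $\sigma+1-k>0$ for every $k\le\lceil s\rceil$, which is exactly what is needed for the $k$-th derivative to exist and to enjoy this power bound. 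I would record this as the opening step so that Lemma \ref{lem 2.6.} becomes applicable.

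Next, I would choose the parameters in Lemma \ref{lem 2.6.} as follows: for each $k=1,\dots,\lceil s\rceil$ put
\[
p_{1_k}=\tfrac{p_1}{\sigma+1-k},\quad p_{2_k}=p_2,\quad p_{3_k}=p_1,\qquad q_{1_k}=\tfrac{q_1}{\sigma+1-k},\quad q_{2_k}=q_2,\quad q_{3_k}=q_1.
\]
A one-line arithmetic check then gives
\[
\tfrac{1}{p_{1_k}}+\tfrac{1}{p_{2_k}}+\tfrac{k-1}{p_{3_k}}
=\tfrac{\sigma+1-k}{p_1}+\tfrac{1}{p_2}+\tfrac{k-1}{p_1}
=\tfrac{\sigma}{p_1}+\tfrac{1}{p_2}=\tfrac{1}{p},
\]
and identically for the $q$-indices, so the admissibility relations demanded by Lemma \ref{lem 2.6.} are fulfilled.

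With these choices, applying Lemma \ref{lem 2.6.} and inserting the bound $|F^{(k)}(u)|\lesssim |u|^{\sigma+1-k}$ into the $L^{p_{1_k},q_{1_k}}$ norm, we use $\||u|^{\sigma+1-k}\|_{L^{p_{1_k},q_{1_k}}}=\|u\|_{L^{p_1,q_1}}^{\sigma+1-k}$ (a standard identity for the Lorentz quasi-norms, recorded in the Preliminaries). Each summand in the resulting bound then reads
\[
\|u\|_{L^{p_1,q_1}}^{\sigma+1-k}\,\|(-\Delta)^{s/2}u\|_{L^{p_2,q_2}}\,\|u\|_{L^{p_1,q_1}}^{k-1}
=\|u\|_{L^{p_1,q_1}}^{\sigma}\,\|u\|_{\dot H^{s}_{p_2,q_2}},
\]
which is independent of $k$. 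Summing over $k=1,\dots,\lceil s\rceil$ absorbs the loss into the implicit constant and yields the claimed estimate.

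The only genuine obstacle is the admissibility of the exponents appearing in Lemma \ref{lem 2.6.}: one must ensure $p_{1_k}\in(1,\infty)$ and $q_{1_k}\in[1,\infty)$ for each $k$, which amounts to $p_1>\sigma+1-k$ and $q_1\ge\sigma+1-k$. In the intended applications of the corollary (within the Strichartz-based contraction argument of Section \ref{sec 4.}), $p_1$ and $q_1$ will be chosen large enough for this to hold; I would note this at the end of the proof. Apart from that, the argument is purely a matter of selecting exponents and invoking Lemma \ref{lem 2.6.}, so the corollary is truly immediate.
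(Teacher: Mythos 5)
Your argument is exactly the paper's intended one: Corollary \ref{cor 2.7.} is stated there as an immediate consequence of Lemma \ref{lem 2.6.}, with precisely the exponent choices you make and the Lorentz power identity $\left\||u|^{r}\right\|_{L^{p,q}}=\left\|u\right\|_{L^{pr,qr}}^{r}$ recorded in the Preliminaries. The admissibility worry you flag at the end is in fact automatic and need not be deferred to the applications: since $1/p<1$, $1/q\le 1$ and $1/p_{2},1/q_{2}>0$, the scaling relations force $p_{1}>\sigma$ and $q_{1}>\sigma$, hence $p_{1_k},q_{1_k}>1$ for every $k=1,\dots,\lceil s\rceil$.
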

We also recall the useful fact concerning to the term $|x|^{-b}$ with $b>0$.
\begin{lemma}\label{lem 2.8.}
Let $b>0$, $s\ge 0$ and $b+s<d$. Then we have $|x|^{-b}\in \dot{H}_{d/(b+s),\infty}^s(\mathbb R^d)$.
\end{lemma}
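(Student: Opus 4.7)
The plan is to reduce the question to the classical weak-$L^{d/\alpha}$ membership of $|x|^{-\alpha}$ for $0<\alpha<d$, via the homogeneity identity
\[
(-\Delta)^{s/2}|x|^{-b}\;=\;c_{d,b,s}\,|x|^{-(b+s)}
\]
valid in $\mathscr{S}'(\mathbb R^d)/\mathscr{P}(\mathbb R^d)$ under the hypotheses $b>0$, $s\ge 0$, $b+s<d$.

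First, I would dispose of the trivial case $s=0$: here the statement is just the well-known fact that $|x|^{-b}\in L^{d/b,\infty}(\mathbb R^d)$ when $0<b<d$, which we verify by a direct distribution-function computation (see below).

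For $s>0$, I would derive the homogeneity identity by Fourier-analytic means. Using the tempered-distribution Fourier pair $\mathscr{F}(|x|^{-b})=c'_{d,b}\,|\xi|^{-(d-b)}$ (valid for $0<b<d$; if $b\ge d$, one works modulo polynomials and tests against Schwartz functions whose Fourier transforms vanish in a neighbourhood of the origin), multiplication on the Fourier side by $|\xi|^{s}$ gives $c''|\xi|^{-(d-b-s)}$. Since by hypothesis $0<d-(b+s)<d$, inverting the Fourier transform yields the desired $c_{d,b,s}|x|^{-(b+s)}$, where the right-hand side is a locally integrable function unambiguously defined modulo polynomials. This identifies $(-\Delta)^{s/2}|x|^{-b}$ with a constant multiple of $|x|^{-(b+s)}$ in $\mathscr{S}'/\mathscr{P}$.

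Second, I would verify $\||x|^{-\alpha}\|_{L^{d/\alpha,\infty}(\mathbb R^d)}<\infty$ for every $\alpha\in(0,d)$ by computing the decreasing rearrangement directly: the super-level set $\{x:|x|^{-\alpha}>\tau\}=\{|x|<\tau^{-1/\alpha}\}$ has Lebesgue measure proportional to $\tau^{-d/\alpha}$, hence the rearrangement of $|x|^{-\alpha}$ is a constant multiple of $t^{-\alpha/d}$, and $\sup_{t>0}t^{1/(d/\alpha)}\cdot t^{-\alpha/d}$ is finite. Taking $\alpha:=b+s$, which lies in $(0,d)$ by assumption, and combining with the identity of the previous step yields
\[
\bigl\|(-\Delta)^{s/2}|x|^{-b}\bigr\|_{L^{d/(b+s),\infty}(\mathbb R^d)}\;=\;|c_{d,b,s}|\,\bigl\||x|^{-(b+s)}\bigr\|_{L^{d/(b+s),\infty}(\mathbb R^d)}\;<\;\infty,
\]
which is exactly the statement $|x|^{-b}\in\dot H^{s}_{d/(b+s),\infty}(\mathbb R^d)$ by the definition recalled earlier.

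The only subtle point is rigorously justifying the homogeneity identity modulo polynomials when $s$ is not an integer and $b$ is arbitrary in $(0,d)$; this is handled by pairing both sides with Schwartz functions whose Fourier transforms are supported away from $\xi=0$, which removes any ambiguity from the polynomial equivalence class. Beyond that, the argument is just bookkeeping of the three scalar conditions $b>0$, $s\ge 0$, $b+s<d$, which ensure that every exponent appearing stays in the range where the Riesz/Fourier formulas are valid.
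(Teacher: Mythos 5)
Your proof is correct and follows essentially the same route as the paper, which simply cites the two facts that $|x|^{-b}\in L^{d/b,\infty}$ and $(-\Delta)^{s/2}(|x|^{-b})=C_{d,b}|x|^{-b-s}$. You merely supply the Fourier-analytic justification of the homogeneity identity and the rearrangement computation that the paper leaves implicit.
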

\begin{proof}
The result follows from the fact that $|x|^{-b}\in L^{\frac{d}{b},\infty}(\mathbb R^d)$ and $(-\Delta)^{s/2}(|x|^{-b})=C_{d,b}|x|^{-b-s}$.
\end{proof}

Finally, we recall the standard Strichartz estimates in Sobolev-Lorentz spaces.

\begin{lemma}[\cite{AKR24,KT98}]\label{lem 2.9.}
Let $S(t)=e^{\pm it\Delta^{2} }$ and $s\in \mathbb R$. Then for any $(q,r)\in B$ and $(\bar{q},\bar{r})\in B$, we have
\begin{equation} \label{GrindEQ__2_1_}
\left\| S(t)\phi \right\| _{L^{q}(\mathbb R,\dot{H}_{r,2}^s)} \lesssim\left\| \phi \right\| _{\dot{H}^{s} } ,
\end{equation}
\begin{equation} \label{GrindEQ__2_2_}
\left\|\int_{0}^{t}S(t-\tau)f(\tau)d\tau\right\|_{L^{q}(\mathbb R,\dot{H}_{r,2}^s)}\lesssim\left\|f\right\|_{L^{\bar{q}'}(\mathbb R,\dot{H}_{\bar{r}',2}^s)}.
\end{equation}
\end{lemma}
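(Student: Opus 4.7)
The plan is to obtain Lemma \ref{lem 2.9.} by combining the two fundamental estimates of the biharmonic Schr\"odinger semigroup with the abstract Keel--Tao machinery, which automatically produces spatial Lorentz norms with second index $2$. First I would record the pair of bounds that drives the Strichartz mechanism: on one hand unitarity of $S(t)=e^{\pm it\Delta^{2}}$ on $L^{2}$ gives $\|S(t)\phi\|_{L^{2}}=\|\phi\|_{L^{2}}$; on the other hand the kernel $K_{t}=\mathscr{F}^{-1}(e^{\pm it|\xi|^{4}})$ satisfies $\|K_{t}\|_{L^{\infty}}\lesssim |t|^{-d/4}$ by a standard stationary-phase analysis of the symbol $|\xi|^{4}$, yielding the dispersive estimate $\|S(t)\phi\|_{L^{\infty}}\lesssim |t|^{-d/4}\|\phi\|_{L^{1}}$.

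Second, I would apply the abstract theorem of Keel--Tao \cite{KT98} to this pair with decay rate $\sigma=d/4$. Their bilinear real-interpolation argument outputs, for every $(q,r)\in B$ (including the endpoint $r=\tfrac{2d}{d-4}$ when $d\ge 5$, which corresponds to $q=2$), the homogeneous estimate $\|S(t)\psi\|_{L^{q}(\mathbb{R},L^{r,2})}\lesssim\|\psi\|_{L^{2}}$; the Lorentz spatial norm with second index $2$ appears naturally from the real-interpolation step rather than as an additional refinement. The inhomogeneous bound between two biharmonic-admissible pairs $(\bar q,\bar r)$ and $(q,r)$ then follows by the standard $TT^{*}$/Christ--Kiselev truncation argument, using the duality $(L^{\bar r,2})^{*}=L^{\bar r',2}$. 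Finally, I would upgrade to the $\dot H^{s}$ statement by a commutation step: since $(-\Delta)^{s/2}$ is a Fourier multiplier it commutes with $S(t)$ and with the Duhamel integral, so the definition $\|f\|_{\dot H^{s}_{r,2}}=\|(-\Delta)^{s/2}f\|_{L^{r,2}}$ reduces the claim to the $s=0$ case already established, with $\phi$ replaced by $(-\Delta)^{s/2}\phi$ and $f$ by $(-\Delta)^{s/2}f$.

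The main obstacle is the endpoint pair $(q,r)=(2,\tfrac{2d}{d-4})$ in dimensions $d\ge 5$: a straightforward complex interpolation of the two defining bounds does not reach it, and one genuinely needs the Keel--Tao bilinear interpolation to obtain the sharp $L^{2}_{t}L^{r,2}_{x}$ estimate. Away from this endpoint a shorter route via Littlewood--Paley decomposition, classical biharmonic Strichartz in $L^{r}$, and real interpolation in $r$ is available, but to cover all of $B$ uniformly the bilinear approach is the cleanest. I would also note that, because the statement only needs Lorentz second index exactly $2$, everything above sits strictly inside what Keel--Tao prove and one is not forced into the more delicate general-Lorentz-index Strichartz theory.
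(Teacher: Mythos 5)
Your reconstruction is correct and is essentially the argument behind the cited references: unitarity of $S(t)$ on $L^2$, the biharmonic dispersive bound $\|S(t)\|_{L^1\to L^\infty}\lesssim |t|^{-d/4}$ (by scaling and stationary phase for the symbol $|\xi|^4$), and the Keel--Tao abstract theorem (restated here as Lemma~\ref{ktS}) applied with $B_0=L^2$, $B_1=L^1$, $\alpha=d/4$, whose real-interpolation step $(L^2,L^1)_{\theta,2}=L^{p,2}$ is exactly what produces the Lorentz index $2$ and reproduces the biharmonic admissibility relation $4/q=d/2-d/r$, after which the $\dot H^s_{r,2}$ version follows by commuting $(-\Delta)^{s/2}$ through $S(t)$ and the Duhamel integral. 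One small inaccuracy: you do not need Christ--Kiselev at all, since Keel--Tao's theorem already yields the retarded (Duhamel) estimate $\bigl\|\int_{s<t}U(t)U(s)^*F(s)\,ds\bigr\|_{L^q B_\theta^*}\lesssim \|F\|_{L^{\tilde q'}B_{\tilde\theta}}$ directly for all admissible pairs, including the endpoint $q=\bar q'=2$ where the Christ--Kiselev truncation lemma actually fails.
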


\section{Regular Strichartz estimates in Lorentz-type spaces}\label{sec 3.}
In this section, we study the properties of Lorentz-type spaces and derive the regular Strichartz estimates in Besov-Lorentz and Sobolev-Lorentz spaces.
\subsection{Besov-Lorentz and Triebel-Lizorkin-Lorentz spaces}
In this subsection, we recall the definition of Besov-Lorentz and Triebel-Lizorkin-Lorentz spaces, and study the properties of these spaces.
The Besov-Lorentz and Triebel-Lizorkin-Lorentz spaces were initially introduced in Sec 2.4 of Triebel \cite{T78}.
Later, these spaces have been studied by several authors and applied to the study of partial differential equations such as Navier-Stokes equations and Euler equations. See, for example, \cite{HS19,L23,ST19,XY11,XY12,YCP05,ZWT11}.

We first recall the definition of homogeneous Littlewood-Paley decomposition (cf. \cite{WHHG11}).
Let $\varphi:\mathbb R^d\to \mathbb [0,1]$ be a smooth radial cut-off function such that $\varphi(\xi)=\frac{1}{2}$ for $|\xi|\le 1$ and $\varphi(\xi)=0$ for $|\xi|\ge 2$.
For $k\in \mathbb Z$, let $\varphi_k(\xi)=\varphi(2^{-k}\xi)-\varphi(2^{1-k}\xi)$.
We define the homogeneous Littlewood-Paley decomposition operator $\{\dot{\Delta}_k\}_{k\in \mathbb Z}$ as follows:
$$\dot{\Delta}_k:=\mathscr{F}^{-1}\varphi_k\mathscr{F},~k\in \mathbb Z.$$

Combining the above homogeneous Leittlewood-Paley decomposition operator $\{\dot{\Delta}_k\}_{k\in \mathbb Z}$ with the function spaces $\dot{l}_r^s(L^{p,q})$ and $L^{p,q}(\dot{l}_r^s)$, we can introduce homogeneous Besov-Lorentz spaces $\dot{B}_{(p,q),r}^s$ and Triebel-Lizorkin-Lorentz spaces $\dot{F}_{(p,q),r}^s$. See \cite{L23,XY11,XY12,YCP05,ZWT11} for example.

Assume that $s\in \mathbb R$, $1<p<\infty$ and $1\le q,r\le \infty$. The homogeneous Besov-Lorentz spaces $\dot{B}_{(p,q),r}^s$ and Triebel-Lizorkin-Lorentz spaces $\dot{F}_{(p,q),r}^s$ are define, respectively, to be the sets of equivalence classes of distribution $f\in \mathscr{S}'(\mathbb R^d)/{\mathscr{P}(\mathbb R^d)}$ such that
$$
\left\|f\right\|_{\dot{B}_{(p,q),r}^s}:=\left\|\{\dot{\Delta}_k f\}_{k\in \mathbb Z}\right\|_{\dot{l}_r^s(L^{p,q})}<\infty
$$
and
$$
\left\|f\right\|_{\dot{F}_{(p,q),r}^s}:=\left\|\{\dot{\Delta}_k f\}_{k\in \mathbb Z}\right\|_{L^{p,q}(\dot{l}_r^s)}<\infty.
$$

Replacing $\dot{l}_q^s$-norms and the homogeneous Litttlewood-Paley decomposition operator $\{\dot{\Delta}_k\}_{k\in \mathbb Z}$ by $l_q^s$-norms and the nonhomogeneous Litttlewood-Paley decomposition operator $\{\Delta_k\}_{k=0}^{\infty}$ (cf. \cite{T83,WHHG11}), we can define the nonhomogeneous Besov-Lorentz spaces $B_{(p,q),r}^s$ and Triebel-Lizorkin-Lorentz spaces $F_{(p,q),r}^s$. See e.g. \cite{HS19,ST19} for the properties of these spaces.

In this paper, we only focus on the homogeneous Besov-Lorentz spaces and Triebel-Lizorkin-Lorentz spaces.
If we replace the Lorentz-norms by the corresponding Lebesgue-norms, we get the well-known Besov spaces ${\dot{B}_{p,r}^s}$ and Triebel-Lizorkin spaces ${\dot{F}_{p,r}^s}$. More precisely, we have ${\dot{B}_{(p,p),r}^s}={\dot{B}_{p,r}^s}$ and ${\dot{F}_{(p,p),r}^s}={\dot{F}_{p,r}^s}$.

We recall the interpolation of Triebel-Lizorkin-Lorentz spaces.
\begin{lemma}[\cite{YCP05}]\label{intF}
Let $1<p_0,p_1<\infty$, $1\le q,r,r_0,r_1\le \infty$, $0<\theta<1$ and $s\in\mathbb R$. If $\frac{1}{p}=\frac{1-\theta}{p_0}+\frac{\theta}{p_1}$, then we have the following real interpolation (see. e.g. \cite{BL76,T78} for the definition and properties of real interpolation):
$$
(\dot{F}_{(p_0,r_0),q}^s,\dot{F}_{(p_1,r_1),q}^s)_{\theta,r}=\dot{F}_{(p,r),q}^s.
$$
In particular, we have
$$
(\dot{F}_{p_0,q}^s,\dot{F}_{p_1,q}^s)_{\theta,r}=\dot{F}_{(p,r),q}^s.
$$
\end{lemma}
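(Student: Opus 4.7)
The plan is to reduce the claim to the interpolation of $X$-valued Lorentz spaces, with $X=\dot l^s_q$, via the standard retraction/co-retraction technique for Besov-type spaces. Observe that the parameter $q$ and the smoothness index $s$ are \emph{fixed} across the pair, so the dyadic structure can be encoded in a single target Banach space, while the Lorentz indices $(p_i,r_i)$ vary only in the outer norm.

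First I would fix an auxiliary Littlewood-Paley family $\{\tilde{\dot\Delta}_k\}_{k\in\mathbb Z}$ whose frequency supports slightly enlarge those of $\dot\Delta_k$ and which satisfies $\tilde{\dot\Delta}_k\dot\Delta_k=\dot\Delta_k$. Define the co-retraction
\begin{equation}\nonumber
J:\dot F^{s}_{(p,r),q}\to L^{p,r}(\mathbb R^d;\dot l^{s}_q),\qquad Jf=\{\dot\Delta_k f\}_{k\in\mathbb Z},
\end{equation}
and the retraction
\begin{equation}\nonumber
P:L^{p,r}(\mathbb R^d;\dot l^{s}_q)\to\dot F^{s}_{(p,r),q},\qquad P\{g_k\}=\sum_{k\in\mathbb Z}\tilde{\dot\Delta}_k g_k.
\end{equation}
By the very definition of the $\dot F^{s}_{(p,r),q}$-quasinorm, $J$ is an isometric embedding, and $P\circ J=\mathrm{Id}$ modulo polynomials. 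The continuity of $P$ uniformly in $(p,r)$ is the nontrivial input: it follows from a vector-valued Fefferman–Stein type maximal inequality in Lorentz norms, which in turn is obtained by real interpolation from the classical Lebesgue case using the boundedness of the Hardy–Littlewood maximal operator on $L^{p,r}$ for $1<p<\infty$, $1\le r\le\infty$.

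Next I would invoke the interpolation of Banach-valued Lorentz spaces: for any Banach space $X$, if $p_0\neq p_1$, $\frac{1}{p}=\frac{1-\theta}{p_0}+\frac{\theta}{p_1}$, and $1\le r_0,r_1,r\le\infty$, then
\begin{equation}\nonumber
(L^{p_0,r_0}(X),L^{p_1,r_1}(X))_{\theta,r}=L^{p,r}(X).
\end{equation}
This extends the scalar Lions–Peetre formula; one proves it either by computing the $K$-functional with the help of the distribution function $t\mapsto|\{\|f(\cdot)\|_X>t\}|$ and then applying Hardy's inequality on decreasing rearrangements, or by reiteration from the endpoint pairs $(L^{p_i,1}(X),L^{p_i,\infty}(X))$. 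Specialising to $X=\dot l^{s}_q$ (which is a Banach space when $1\le q\le\infty$), one obtains the desired interpolation of the ambient sequence-valued Lorentz spaces.

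Finally, I apply the general retract principle from abstract interpolation theory (e.g.\ \cite{BL76}, Thm.~6.4.2): if $X_i$ is a retract of $Y_i$ through a common pair $(J,P)$ for $i=0,1$, then $(X_0,X_1)_{\theta,r}$ is a retract of $(Y_0,Y_1)_{\theta,r}$ through the \emph{same} pair. Setting $X_i=\dot F^{s}_{(p_i,r_i),q}$ and $Y_i=L^{p_i,r_i}(\dot l^{s}_q)$ and combining with the previous paragraph yields
\begin{equation}\nonumber
(\dot F^{s}_{(p_0,r_0),q},\dot F^{s}_{(p_1,r_1),q})_{\theta,r}=P\bigl(L^{p,r}(\dot l^{s}_q)\bigr)=\dot F^{s}_{(p,r),q},
\end{equation}
with equivalent norms, which is the stated identity; the particular case $r_0=p_0$, $r_1=p_1$ then recovers the second formula. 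The chief obstacle is the uniform boundedness of the reconstruction operator $P$ in the Lorentz setting; everything else is a straightforward transcription of well-known vector-valued interpolation machinery.
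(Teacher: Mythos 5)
Your argument is sound and essentially complete in outline; note, however, that the paper does not prove this lemma at all --- it is quoted verbatim from \cite{YCP05}, where it is obtained through the wavelet (rather than Littlewood--Paley) characterization of $\dot F^s_{(p,r),q}$. Your route is the classical retraction one: encode the dyadic structure in the fixed target space $X=\dot l^s_q$, reduce to the interpolation identity $(L^{p_0,r_0}(X),L^{p_1,r_1}(X))_{\theta,r}=L^{p,r}(X)$ (which indeed follows from the fact that the $K$-functional for the couple $(L^{p_0}(X),L^{p_1}(X))$ depends only on the distribution function of $\|f(\cdot)\|_X$, plus reiteration), and transfer back via the retract principle, the only substantive input being the uniform boundedness of the reconstruction operator $P$, which you correctly trace to a Fefferman--Stein vector-valued maximal inequality in $L^{p,r}(\dot l^s_q)$ --- itself available by real interpolation from the Lebesgue case since the maximal operator is sublinear. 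The wavelet proof of \cite{YCP05} is structurally the same retract argument with a different co-retraction (wavelet coefficients in place of $\{\dot\Delta_k f\}$), so nothing essential is gained or lost; your version has the advantage of staying entirely within the Littlewood--Paley framework the paper already sets up. One point worth recording: your proof correctly inserts the hypothesis $p_0\neq p_1$ at the vector-valued Lorentz interpolation step, whereas the lemma as stated formally allows $p_0=p_1$ with $r_0\neq r_1$ and arbitrary $r$, in which case the identity fails (one would then need $\frac1r=\frac{1-\theta}{r_0}+\frac{\theta}{r_1}$); this is a defect of the statement, not of your argument, and is harmless for the paper's applications, where interpolation is always performed between distinct integrability exponents.
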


We also have the following interpolation of Besov-Lorentz spaces.

\begin{lemma}\label{intB}
Let $1<p_0,p_1<\infty$, $1\le q,q_0,q_1,r_0,r_1<\infty$, $s,s_,s_1\in\mathbb R$ and $0<\theta<1$. If
$$
s=(1-\theta)s_0+\theta s_1,~\frac{1}{p}=\frac{1-\theta}{p_0}+\frac{\theta}{p_1}~\textrm{and}~ \frac{1}{q}=\frac{1-\theta}{q_0}+\frac{\theta}{q_1},
$$
then we have the following real interpolation:
$$
(\dot{B}_{(p_0,r_0),q_0}^{s_0},\dot{B}_{(p_1,r_1),q_1}^{s_1})_{\theta,q}=\dot{B}_{(p,q),q}^s.
$$
In particular, we have
$$
(\dot{B}_{p_0,q_0}^{s_0},\dot{B}_{p_1,q_1}^{s_1})_{\theta,q}=\dot{B}_{(p,q),q}^s.
$$
\end{lemma}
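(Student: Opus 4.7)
The plan is to adapt the classical real-method interpolation identity for Besov spaces (see Bergh--L\"ofstr\"om \cite{BL76} or Triebel \cite{T78}) to the Besov--Lorentz setting via the Peetre retraction--coretraction method. This reduces the statement to an interpolation identity for weighted vector-valued sequence spaces with Lorentz-space values, which can then be handled by a K-functional computation.

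First I would realize $\dot{B}_{(p,r),q}^s$ as a retract of $\dot{l}_q^s(L^{p,r})$. The lifting $J\colon f\mapsto \{\dot{\Delta}_k f\}_{k\in\mathbb Z}$ is an isometry from $\dot{B}_{(p,r),q}^s$ into $\dot{l}_q^s(L^{p,r})$ by definition. To build the coretraction, fix smooth thickened Fourier multipliers $\tilde\varphi_k$ with $\tilde\varphi_k\varphi_k=\varphi_k$, set $\tilde\Delta_k=\mathscr{F}^{-1}\tilde\varphi_k\mathscr{F}$, and define $P(\{f_k\})=\sum_k\tilde\Delta_k f_k$ (convergence in $\mathscr{S}'/\mathscr{P}$). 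By the Mikhlin--H\"ormander multiplier theorem, which extends to $L^{p,r}$ for $1<p<\infty$, $1\le r\le\infty$ by Marcinkiewicz interpolation from the classical $L^p$ bound, $P$ maps $\dot{l}_q^s(L^{p,r})$ boundedly into $\dot{B}_{(p,r),q}^s$ and satisfies $P\circ J=\mathrm{id}$. Since $J$ and $P$ are simultaneously defined on both endpoints, the abstract retraction principle \cite[Theorem~1.2.4]{BL76} applied to the real interpolation functor yields
$$
(\dot{B}_{(p_0,r_0),q_0}^{s_0},\dot{B}_{(p_1,r_1),q_1}^{s_1})_{\theta,q}
\;\cong\; P\bigl((\dot{l}_{q_0}^{s_0}(L^{p_0,r_0}),\dot{l}_{q_1}^{s_1}(L^{p_1,r_1}))_{\theta,q}\bigr).
$$

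The remaining task is to identify the interpolated sequence space with $\dot{l}_q^s(L^{p,q})$. I would combine the classical real-method interpolation of Lorentz spaces $(L^{p_0,r_0},L^{p_1,r_1})_{\theta,q}=L^{p,q}$ with the K-functional formula for weighted vector-valued $\ell^{q_i}$-sums, exactly as in the Lebesgue case, to produce
$$
(\dot{l}_{q_0}^{s_0}(L^{p_0,r_0}),\dot{l}_{q_1}^{s_1}(L^{p_1,r_1}))_{\theta,q}=\dot{l}_q^s(L^{p,q})
$$
under the stated relations on $s,p,q$. Applying $P$ then returns the Besov--Lorentz identity, and the ``in particular'' statement follows by specializing $r_i=p_i$ and using $\dot{B}_{(p_i,p_i),q_i}^{s_i}=\dot{B}_{p_i,q_i}^{s_i}$. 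The main obstacle lies in this last K-functional computation: when $s_0=s_1$ (permitted by the hypotheses) the usual weight-rescaling argument that separates the sequence and space variables degenerates, and one must argue directly on $K(t,\{f_k\})$ using the rearrangement description of the Lorentz norms together with the $\ell^{q_i}$ summation to produce matching upper and lower bounds. This is the only step that does not follow mechanically from the Lebesgue-target analogue, but it is tractable since the sequence structure and the Lorentz structure act on orthogonal variables.
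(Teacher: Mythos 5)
Your argument is correct and takes essentially the same route as the paper: both reduce the statement, via the Littlewood--Paley retraction, to the vector-valued sequence-space identity $(\dot{l}_{q_0}^{s_0}(A_0),\dot{l}_{q_1}^{s_1}(A_1))_{\theta,q}=\dot{l}_{q}^{s}((A_0,A_1)_{\theta,q})$ combined with $(L^{p_0,r_0},L^{p_1,r_1})_{\theta,q}=L^{p,q}$. The only difference is that the paper simply cites Bergh--L\"ofstr\"om Theorem 5.6.2 for the sequence-space step, which already covers the case $s_0=s_1$ that you flag as the main obstacle (the weights $2^{-ks_i}$ are absorbed into the fibers by homogeneity of the $K$-functional, and it is the relation $\frac{1}{q}=\frac{1-\theta}{q_0}+\frac{\theta}{q_1}$, not $s_0\neq s_1$, that makes the identity work), so the direct $K$-functional computation you anticipate is not actually needed.
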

\begin{proof}
\cite[Theorem 5.6.2]{BL76} shows that
\footnote[2]{~It is worth mentioning that the authors in \cite{BL76} proved the real interpolation $(l_{q_0}^{s_0}(A_0),l_{q_1}^{s_1}(A_1))_{\theta,q}=l_{q_1}^{s_1}((A_0,A_1)_{\theta, q})$. But one can easily see that the proof and so the result still hold for the corresponding dotted spaces.}
\begin{equation} \label{GrindEQ__3_1_}
(\dot{l}_{q_0}^{s_0}(A_0),\dot{l}_{q_1}^{s_1}(A_1))_{\theta,q}=\dot{l}_{q_1}^{s_1}((A_0,A_1)_{\theta, q}).
\end{equation}
And Theorem 2 in Subsection 1.18.6 of \cite{T78} shows that
\begin{equation} \label{GrindEQ__3_2_}
(L^{p_0,r_0},L^{p_1,r_1})_{\theta,q}=L^{p,q}.
\end{equation}
The results follow directly from \eqref{GrindEQ__3_1_} and \eqref{GrindEQ__3_2_}.
\end{proof}

Using Lemma \ref{intF}, we also have the following interesting result.
\begin{lemma}\label{isoF}
Let $1<p<\infty$, $1\le q,r\le \infty$ and $s, \gamma \in\mathbb R$. Then
we have
$$
\left\|(-\Delta)^{\gamma /2}f\right\|_{\dot{F}_{(p,r),q}^s}\sim \left\|f\right\|_{\dot{F}_{(p,r),q}^{s+\gamma}}.
$$
\end{lemma}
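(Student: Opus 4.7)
The plan is to reduce the lemma to the well-known lifting property for classical Triebel--Lizorkin spaces and then use Lemma \ref{intF} to transfer the result to the Lorentz refinement.

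First, I would recall that for all $1<P<\infty$, $1\le q\le\infty$, $s,\gamma\in\mathbb R$, one has the classical isomorphism
\begin{equation*}
\bigl\|(-\Delta)^{\gamma/2}f\bigr\|_{\dot{F}_{P,q}^{s}}\sim \|f\|_{\dot{F}_{P,q}^{s+\gamma}},
\end{equation*}
which is a standard fact (see e.g. Triebel \cite{T78} or \cite{WHHG11}) and is proved directly from the Littlewood--Paley characterization via the Fourier multiplier $|\xi|^{\gamma}\chi(\xi)$ on each annulus. Concretely, $(-\Delta)^{\gamma/2}$ acts as multiplication by $|\xi|^{\gamma}$, and on the support of $\varphi_k$ this is comparable to $2^{k\gamma}$ times a smooth symbol of order $0$; the resulting operator preserves $L^{P}$-boundedness uniformly in $k$, so it shifts the weight $2^{ks}$ to $2^{k(s+\gamma)}$ inside the $\dot{F}$-norm.

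Next, given the target exponent $p$, I would pick $p_{0},p_{1}\in(1,\infty)$ and $\theta\in(0,1)$ with $\frac{1}{p}=\frac{1-\theta}{p_{0}}+\frac{\theta}{p_{1}}$. The classical isomorphism says that $(-\Delta)^{\gamma/2}$ is a bounded linear map
\begin{equation*}
(-\Delta)^{\gamma/2}:\dot{F}_{p_{i},q}^{s+\gamma}\longrightarrow \dot{F}_{p_{i},q}^{s},\qquad i=0,1,
\end{equation*}
with a bounded inverse $(-\Delta)^{-\gamma/2}$. Applying real interpolation with parameters $(\theta,r)$ to this pair of bounded operators and invoking Lemma \ref{intF} on both sides gives a bounded operator
\begin{equation*}
(-\Delta)^{\gamma/2}:\dot{F}_{(p,r),q}^{s+\gamma}\longrightarrow \dot{F}_{(p,r),q}^{s},
\end{equation*}
hence $\|(-\Delta)^{\gamma/2}f\|_{\dot{F}_{(p,r),q}^{s}}\lesssim \|f\|_{\dot{F}_{(p,r),q}^{s+\gamma}}$. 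Interpolating the inverse map $(-\Delta)^{-\gamma/2}$ in the same way yields the reverse inequality, and the two together give the claimed equivalence.

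I do not expect a major obstacle: all the machinery is in place, and the argument is essentially a transport of a known lifting statement through real interpolation. The only mild point to keep track of is that Lemma \ref{intF} is stated for $1<p_{0},p_{1}<\infty$ and $1\le q,r,r_{0},r_{1}\le \infty$, which is exactly the range needed here; for the endpoint $r=\infty$ or $q=\infty$ the same interpolation identity still applies, and for $r=1$ one can alternatively read the conclusion off directly from the Littlewood--Paley characterization without any interpolation, so no range is lost.
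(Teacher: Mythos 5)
Your proposal is correct and follows essentially the same route as the paper: both deduce the one-sided bound by interpolating the classical lifting isomorphism for $\dot{F}_{p,q}^{s}$ via Lemma \ref{intF}, and both obtain the reverse inequality by applying the same argument to $(-\Delta)^{-\gamma/2}$ and composing with $(-\Delta)^{\gamma/2}$. No gaps.
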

\begin{proof}
Theorem 1 in Subsection 5.2.3 of \cite{T83} shows that
\begin{equation} \label{GrindEQ__3_3_}
\left\|(-\Delta)^{\gamma /2}f\right\|_{\dot{F}_{p,q}^s}\sim \left\|f\right\|_{\dot{F}_{p,q}^{s+\gamma}},
\end{equation}
for any $1<p<\infty$, $1\le q\le\infty$, and $s,\gamma\in \mathbb R$.
\eqref{GrindEQ__3_3_} and Lemma \ref{intF} imply that
\begin{equation} \label{GrindEQ__3_4_}
\left\|(-\Delta)^{\gamma /2}f\right\|_{\dot{F}_{(p,r),q}^s}\lesssim \left\|f\right\|_{\dot{F}_{(p,r),q}^{s+\gamma}},
\end{equation}
for any $1<p<\infty$, $1\le q,r\le\infty$, and $s,\gamma\in \mathbb R$.
\eqref{GrindEQ__3_4_} shows that
\begin{equation} \label{GrindEQ__3_5_}
\left\|(-\Delta)^{-\gamma /2}f\right\|_{\dot{F}_{(p,r),q}^{s+\gamma}}\lesssim \left\|f\right\|_{\dot{F}_{(p,r),q}^{s}}.
\end{equation}
Taking $f=(-\Delta)^{\gamma /2}g$ in \eqref{GrindEQ__3_5_}, we immediately get
\begin{equation} \label{GrindEQ__3_6_}
\left\|g\right\|_{\dot{F}_{(p,r),q}^{s+\gamma}}\lesssim \left\|(-\Delta)^{\gamma /2}g\right\|_{\dot{F}_{(p,r),q}^{s}}.
\end{equation}
Using \eqref{GrindEQ__3_4_} and \eqref{GrindEQ__3_6_}, we immediately get the desired result.
\end{proof}

It is well-known that $\left\|f\right\|_{\dot{H}_{p}^{s}}\sim \left\|f\right\|_{\dot{F}_{p,2}^{s}}$ for $1<p<\infty$ and $s\in\mathbb R$.
See, for example, \cite[Subsection 5.2.3]{T83}.
This result can be extended as follows.
\begin{corollary}\label{equHF}
Let $1<p<\infty$, $1\le q<\infty$ and $s\in\mathbb R$. Then we have
$\left\|f\right\|_{\dot{H}_{p,q}^{s}}\sim \left\|f\right\|_{\dot{F}_{(p,q),2}^{s}}$.
\end{corollary}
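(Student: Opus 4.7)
The plan is to derive the equivalence by real interpolation, reducing to the classical coincidence $\dot{H}_p^s=\dot{F}_{p,2}^s$ (Subsection 5.2.3 of Triebel \cite{T83}) combined with Lemma \ref{intF} and the Lorentz interpolation identity \eqref{GrindEQ__3_2_}. Fix the given $p\in(1,\infty)$ and $q\in[1,\infty)$, and choose auxiliary exponents $p_0,p_1\in(1,\infty)$ with $p_0\ne p_1$ and $\theta\in(0,1)$ such that $\frac{1}{p}=\frac{1-\theta}{p_0}+\frac{\theta}{p_1}$. This is always possible under the hypotheses.

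First I would compute the real interpolation space $(\dot{H}_{p_0}^{s},\dot{H}_{p_1}^{s})_{\theta,q}$. Since $(-\Delta)^{s/2}$ is an isometric isomorphism from $\dot{H}_{p_i}^{s}$ onto $L^{p_i}$ (by the very definition of the homogeneous Sobolev space on the quotient $\mathscr{S}'/\mathscr{P}$), the interpolation functor commutes with this isomorphism, giving
\begin{equation}\nonumber
(\dot{H}_{p_0}^{s},\dot{H}_{p_1}^{s})_{\theta,q}\stackrel{(-\Delta)^{s/2}}{\cong}(L^{p_0},L^{p_1})_{\theta,q}=L^{p,q},
\end{equation}
where the last equality is \eqref{GrindEQ__3_2_}. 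Pulling back through $(-\Delta)^{-s/2}$, this gives $(\dot{H}_{p_0}^{s},\dot{H}_{p_1}^{s})_{\theta,q}=\dot{H}_{p,q}^{s}$ with equivalent norms.

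Next I would compute the same interpolation space through the Triebel-Lizorkin side. Since $\dot{H}_{p_i}^{s}=\dot{F}_{p_i,2}^{s}$ with equivalent norms for $i=0,1$, the two interpolation couples $(\dot{H}_{p_0}^{s},\dot{H}_{p_1}^{s})$ and $(\dot{F}_{p_0,2}^{s},\dot{F}_{p_1,2}^{s})$ coincide, so
\begin{equation}\nonumber
(\dot{H}_{p_0}^{s},\dot{H}_{p_1}^{s})_{\theta,q}=(\dot{F}_{p_0,2}^{s},\dot{F}_{p_1,2}^{s})_{\theta,q}=\dot{F}_{(p,q),2}^{s},
\end{equation}
where the second equality is precisely the particular case of Lemma \ref{intF} with $r_0=p_0$, $r_1=p_1$ and $r=q$. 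Comparing the two identifications of the interpolation space yields $\dot{H}_{p,q}^{s}=\dot{F}_{(p,q),2}^{s}$ with equivalent norms, which is the claim.

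The main subtle point, and the one I expect to be most delicate to justify carefully, is the compatibility of $(-\Delta)^{s/2}$ with real interpolation on the quotient $\mathscr{S}'/\mathscr{P}$: we must verify that $(-\Delta)^{s/2}$ is an isomorphism of interpolation couples $(\dot{H}_{p_0}^{s},\dot{H}_{p_1}^{s})\to(L^{p_0},L^{p_1})$, which in turn requires that the two Sobolev spaces be realized inside a common ambient topological vector space. This is standard (one can, for instance, use the Bessel-potential realization on $\mathscr{S}'/\mathscr{P}$, or pass through the Littlewood-Paley characterization as in \cite{T83}), but it is the step where one has to invoke the homogeneous framework carefully rather than merely quote $L^p$ interpolation.
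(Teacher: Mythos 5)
Your proof is correct, but it takes a genuinely different route from the paper's. The paper quotes Theorem 5 of \cite{XY11} to get the base case $\left\|f\right\|_{L^{p,q}}\sim\left\|f\right\|_{\dot{F}_{(p,q),2}^{0}}$ and then lifts it to general $s$ with Lemma \ref{isoF}: indeed
\begin{equation}\nonumber
\left\|f\right\|_{\dot{H}_{p,q}^{s}}=\left\|(-\Delta)^{s/2}f\right\|_{L^{p,q}}\sim\left\|(-\Delta)^{s/2}f\right\|_{\dot{F}_{(p,q),2}^{0}}\sim\left\|f\right\|_{\dot{F}_{(p,q),2}^{s}}.
\end{equation}
You instead interpolate directly at level $s$: compute $(\dot{H}_{p_0}^{s},\dot{H}_{p_1}^{s})_{\theta,q}$ once through $(-\Delta)^{s/2}$ and \eqref{GrindEQ__3_2_} to get $\dot{H}_{p,q}^{s}$, and once through the classical identity $\dot{H}_{p}^{s}=\dot{F}_{p,2}^{s}$ together with Lemma \ref{intF} to get $\dot{F}_{(p,q),2}^{s}$. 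This avoids citing the $s=0$ result of \cite{XY11}, at the cost of having to justify that $(-\Delta)^{s/2}$ is an isomorphism of interpolation couples on $\mathscr{S}'/\mathscr{P}$ — a point you rightly flag as the delicate one; it does go through once the endpoint spaces are realized compatibly in the quotient. A side benefit of your route is that it simultaneously establishes the Sobolev-Lorentz interpolation identity $(\dot{H}_{p_0}^{s},\dot{H}_{p_1}^{s})_{\theta,q}=\dot{H}_{p,q}^{s}$, which is useful on its own; the paper's route is shorter because Lemma \ref{isoF} was already proved and \cite{XY11} supplies the base case for free.
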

\begin{proof}
The case $s=0$ was proved in Theorem 5 of \cite{XY11}, i.e. we have $\left\|f\right\|_{L^{p,q}}\sim \left\|f\right\|_{\dot{F}_{(p,q),2}^{0 }}$.
Using this fact and Lemma \ref{isoF}, we immediately get the desired result.
\end{proof}

We also have the following result for Besov-Lorentz spaces which is similar to Lemma \ref{isoF}.
\begin{lemma}\label{isoB}
Let $1<p<\infty$, $1\le q,r\le \infty$ and $s, \gamma \in\mathbb R$. Then
we have
$$
\left\|(-\Delta)^{\gamma /2}f\right\|_{\dot{B}_{(p,r),q}^s}\sim \left\|f\right\|_{\dot{B}_{(p,r),q}^{s+\gamma}}.
$$
\end{lemma}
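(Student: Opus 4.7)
Plan: The strategy parallels the proof of Lemma \ref{isoF}. The starting point is the classical identity for Besov spaces,
\[
\|(-\Delta)^{\gamma/2}f\|_{\dot B^s_{p,q}}\sim\|f\|_{\dot B^{s+\gamma}_{p,q}}\qquad(1<p<\infty,\ 1\le q\le\infty,\ s,\gamma\in\mathbb R),
\]
which is the Besov counterpart of the Triebel-Lizorkin identity invoked in Lemma \ref{isoF} and is found, e.g., in \cite[Subsection 5.2.3]{T83}. One would like to upgrade this to Besov-Lorentz spaces by real interpolation exactly as in Lemma \ref{isoF}; however, Lemma \ref{intB} only produces spaces of the form $\dot B^s_{(p,q),q}$ (with matching Lorentz fine index and Besov summability index on the right-hand side), which does not cover $\dot B^s_{(p,r),q}$ for $r\ne q$. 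I would therefore argue directly via the Littlewood-Paley characterization.

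The key technical ingredient is a Bernstein-type estimate in Lorentz spaces: for any tempered distribution $g$ whose Fourier support lies in an annulus $\{|\xi|\sim 2^k\}$,
\[
\|(-\Delta)^{\gamma/2}g\|_{L^{p,r}}\sim 2^{k\gamma}\|g\|_{L^{p,r}}.
\]
To establish the upper bound, I would pick a smooth radial cutoff $\tilde\varphi$ supported in a slightly larger annulus and identically $1$ on $\{|\xi|\sim 1\}$, and write $(-\Delta)^{\gamma/2}g=2^{k\gamma}(K_k * g)$ with $K_k(x)=2^{kd}K(2^k x)$ and $K=\mathscr F^{-1}[\,|\cdot|^\gamma\tilde\varphi(\cdot)\,]\in\mathscr S\subset L^1$. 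Young's inequality in Lorentz spaces, $\|K*g\|_{L^{p,r}}\lesssim\|K\|_{L^1}\|g\|_{L^{p,r}}$, then yields the upper bound; the matching lower bound follows by applying the same estimate with $\gamma$ replaced by $-\gamma$ to $(-\Delta)^{\gamma/2}g$, which has the same Fourier support as $g$. The Lorentz Young inequality itself is a standard consequence of the classical $L^1*L^p\hookrightarrow L^p$ bound together with real interpolation of $L^p$ in the second factor.

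Applying this estimate to $g=\dot\Delta_k f$ and using the commutation $\dot\Delta_k(-\Delta)^{\gamma/2}=(-\Delta)^{\gamma/2}\dot\Delta_k$ gives
\[
\|\dot\Delta_k((-\Delta)^{\gamma/2}f)\|_{L^{p,r}}\sim 2^{k\gamma}\|\dot\Delta_k f\|_{L^{p,r}}\qquad(k\in\mathbb Z).
\]
Substituting this into $\|(-\Delta)^{\gamma/2}f\|_{\dot B^s_{(p,r),q}} = \|\{\dot\Delta_k((-\Delta)^{\gamma/2}f)\}_k\|_{\dot l^s_q(L^{p,r})}$ and absorbing the factor $2^{k\gamma}$ into the weight of $\dot l^s_q$ (which shifts the regularity index from $s$ to $s+\gamma$) yields the desired equivalence. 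The only nontrivial technical point is the verification of the Lorentz Young inequality; everything else is routine bookkeeping via the Littlewood-Paley decomposition.
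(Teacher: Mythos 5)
Your proof is correct and follows essentially the same route as the paper's: reduce to the frequency-localized multiplier bound $\|\mathscr{F}^{-1}(|\xi|^{\gamma}\varphi_{k+l})\mathscr{F}g\|_{L^{p,r}}\lesssim 2^{\gamma k}\|g\|_{L^{p,r}}$ on a single dyadic block, establish it by showing the rescaled kernel lies in $L^1$ with uniformly bounded norm (the paper phrases this as a bound on the $M_p$ multiplier norm followed by real interpolation to Lorentz spaces, which is exactly what your $L^1$-kernel plus Lorentz Young's inequality argument amounts to), and then reassemble through the $\dot l^s_q$ weights. Your observation that Lemma \ref{intB} alone does not reach $\dot B^s_{(p,r),q}$ for $r\neq q$ is precisely why the paper also works block-by-block rather than interpolating as in Lemma \ref{isoF}.
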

\begin{proof}
Noticing that
$$
\dot{\Delta}_k f=\sum_{l=-1}^{l=1}{\dot{\Delta}_{k+l}(\dot{\Delta}_k f)},
$$
it suffices to prove that
$$
\left\|\mathscr{F}^{-1}|\xi|^{\gamma}\varphi_{k+l}\mathscr{F}f\right\|_{L^{p,q}}\lesssim 2^{\gamma k} \left\|f\right\|_{L^{p,q}}~(l=0,\pm 1),
$$
for any $1<p<\infty$, $1\le q\le \infty$, $\gamma\in \mathbb R$ and $k\in \mathbb Z$. And this result follows from the fact that
$$
\left\||\xi|^{\gamma}\varphi_{k+l}\right\|_{M_{p}}\lesssim 2^{\gamma k},
$$
and the real interpolation, where $M_p$ denotes the set of all multiplier on $L^p$. See (5) in the proof of Lemma 6.2.1 on page 140 of \cite{BL76}.
This completes the proof.
\end{proof}
Let us consider the embeddings between the homogeneous Besov-Lorentz spaces, Triebel-Lizorkin-Lorentz spaces and Sobolev-Lorentz spaces.
To this end we recall the following embedding result
\footnote[3]{~In fact, the authors in \cite{ST19} obtained the embedding result for the nonhomogeneous version. But one can see that the proof and so the result still hold for the homogeneous version.}
.
\begin{proposition}[\cite{ST19}]\label{embl}
Let $1<p<\infty$, $1\le q_0,q_1,r_0,r_1<\infty$, $r_0\le r_1$.
\begin{enumerate}
  \item There holds the embedding $\dot{l}_{q_0}^{0}(L^{p,r_0})\hookrightarrow L^{p,r_1}(\dot{l}_{q_1}^{0})$, if $q_0\le \min\{p,q_1,r_1\}$ and one of the following conditions are satisfied:

     $(i)~p\neq q_1,~~(ii)~~p=q_1\ge r_0,~~(iii)~~q_0<p=q_1<r_0$.
  \item There holds the embedding $L^{p,r_0}(\dot{l}_{q_0}^{0})\hookrightarrow \dot{l}_{q_1}^{0}(L^{p,r_1})$, if $q_1\ge \min\{p,q_0,r_0\}$ and one of the following conditions are satisfied:

     $(i)~p\neq q_0,~~(ii)~~p=q_0\ge r_1,~~(iii)~~r_1<p=q_0<q_1$.
\end{enumerate}
\end{proposition}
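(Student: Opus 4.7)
The plan is to deduce the stated homogeneous embeddings from the nonhomogeneous analogues in \cite{ST19} and, as the footnote flags, show that no step of that argument genuinely uses the one-sided index set. Concretely, $\dot l_q^0$ and $l_q^0$ differ only by summing over $\mathbb Z$ versus $\mathbb N$, and the dyadic blocks $\dot\Delta_k f$ are a pairwise-compatible family of tempered distributions for every $k\in \mathbb Z$, so the mixed-norm inequalities that drive the proof can be rewritten verbatim with the sum running over all integers. Thus the first task is to unpack the \cite{ST19} proof and verify its invariance under this change of index set.

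For a self-contained strategy I would anchor the argument at a small number of endpoint cases and then use real interpolation to sweep out the stated parameter ranges. The first anchor is the Minkowski integral inequality in Lorentz form, which yields $\dot l_{q_0}^0(L^{p,r}) \hookrightarrow L^{p,r}(\dot l_{q_0}^0)$ whenever $q_0 \le p$; combined with the trivial sequence-space embedding $\dot l_{q_0}^0 \hookrightarrow \dot l_{q_1}^0$ (for $q_0 \le q_1$) this handles a first slice of case~(i). The second anchor is the Fubini identity at $p=q_0=q_1=r_0=r_1$, where both sides collapse to $L^p(\dot l^{\,p})$. Using the real-interpolation formulas \eqref{GrindEQ__3_1_} and \eqref{GrindEQ__3_2_} already invoked in the proof of Lemma~\ref{intB}, one can interpolate between these two anchors to recover the full parameter region stated in part~1. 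Part~2 is then obtained either by duality between $\dot l_{q_0}^0$ and $\dot l_{q_0'}^0$ and between $L^{p,r_0}$ and $L^{p',r_0'}$, or symmetrically by starting from the reverse Minkowski bound $L^{p}(\dot l_{q}^0)\hookrightarrow \dot l_{q}^0(L^p)$ for $q\ge p$ and applying the same interpolation scheme.

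The delicate part will be the borderline ranges (ii) and (iii), in which $p$ equals $q_1$ (respectively $q_0$) and an auxiliary condition on $r_0$ or $r_1$ is imposed. The plain Minkowski inequality degenerates exactly at this diagonal, and I expect the hard input to be a sharper rearrangement inequality in the spirit of \cite{ST19} that trades the loss in the $L^{p,r}$ exponent against the gain afforded by the strict inequality on the other parameter. Verifying that this rearrangement step, together with the subsequent interpolation, carries over unchanged to the bi-infinite index set $\mathbb Z$ is the main technical check that remains.
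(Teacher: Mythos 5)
Your first paragraph matches what the paper actually does: it cites Seeger--Trebels and, in a footnote, asserts that the nonhomogeneous result transfers to the homogeneous one. The transfer is in fact immediate and does not require inspecting the dyadic blocks $\dot\Delta_k f$ at all (the proposition is a statement about abstract mixed-norm sequence spaces, not about Littlewood--Paley pieces): at smoothness $s=0$ the weights $2^{-ks}$ disappear, so $l_{q}^{0}$ and $\dot l_{q}^{0}$ are both just $\ell^{q}$ over a countably infinite index set, and the Seeger--Trebels embeddings between $\ell^{q_0}(L^{p,r_0})$ and $L^{p,r_1}(\ell^{q_1})$ are stated and proved in a way that is insensitive to whether the index set is $\mathbb N$ or $\mathbb Z$. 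That is all that needs to be said for the paper's purposes, and your phrasing in terms of ``verifying invariance under change of index set'' is the right instinct.

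The ``self-contained strategy'' in your second and third paragraphs has a genuine gap and is not the route the paper takes. Interpolating between the Minkowski anchor ($q_0\le p$) and the Fubini anchor ($p=q_0=q_1=r_0=r_1$) via \eqref{GrindEQ__3_1_}--\eqref{GrindEQ__3_2_} cannot recover the parameter regions in (i)--(iii). First, the region described by $p\neq q_1$ together with the inequality constraints is not a convex interpolation segment, and the case split at $p=q_1$ with its auxiliary conditions on $r_0$ reflects the sharpness of the Seeger--Trebels result (they also prove necessity), which real interpolation alone cannot detect. Second, the interpolation formulas \eqref{GrindEQ__3_1_} and \eqref{GrindEQ__3_2_} apply separately to the outer sequence scale and to the Lorentz scale; to move the inner exponent $r$ and the outer exponent $q$ simultaneously in a mixed space such as $L^{p,r}(\ell^q)$ you would need vector-valued real-interpolation identities that are known to fail in general. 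Third, you yourself concede that the borderline cases (ii) and (iii) would still require a rearrangement inequality ``in the spirit of \cite{ST19}'' --- at which point the argument is no longer self-contained and you are better off simply invoking their theorem and noting the trivial index-set change, as the paper does.
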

Applying Proposition \ref{embl} to $\{2^{sk}\dot{\Delta}_k f\}_{k\in \mathbb Z}$, we have the following embedding result between the homogeneous Besov-Lorentz spaces and Triebel-Lizorkin-Lorentz spaces.
\begin{corollary}\label{embBF}
Let $1<p<\infty$, $1\le q_0,q_1,r_0,r_1<\infty$, $r_0\le r_1$ and $s\in \mathbb R$.
\begin{enumerate}
  \item There holds the embedding $\dot{B}_{(p,r_0),q_0}^s\hookrightarrow \dot{F}_{(p,r_1),q_1}^s$, if $q_0\le \min\{p,q_1,r_1\}$ and one of the following conditions are satisfied:

     $(i)~p\neq q_1,~(ii)~p=q_1\ge r_0,~(iii)~q_0<p=q_1<r_0$.
  \item There holds the embedding $\dot{F}_{(p,r_0),q_0}^s\hookrightarrow \dot{B}_{(p,r_1),q_1}^s$, if $q_1\ge \min\{p,q_0,r_0\}$ and one of the following conditions are satisfied:

     $(i)~p\neq q_0,~(ii)~p=q_0\ge r_1,~(iii)~r_1<p=q_0<q_1$.
\end{enumerate}\end{corollary}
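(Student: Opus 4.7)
The plan is to reduce the claim to the unweighted ($s=0$) sequence-space embeddings already stated in Proposition \ref{embl}, by absorbing the smoothness weight into the dyadic sequence. Concretely, given $f$, I would set $a_k := 2^{-sk}\dot{\Delta}_k f$ for $k\in\mathbb Z$. Unpacking the definitions of $\dot l_q^s$, $\dot B^s_{(p,r),q}$ and $\dot F^s_{(p,r),q}$ recalled in Section \ref{sec 3.}, this choice yields
\[
\|f\|_{\dot B^s_{(p,r),q}}=\|\{a_k\}_{k\in\mathbb Z}\|_{\dot l_q^0(L^{p,r})},\qquad \|f\|_{\dot F^s_{(p,r),q}}=\|\{a_k\}_{k\in\mathbb Z}\|_{L^{p,r}(\dot l_q^0)},
\]
since in both cases the factor $2^{-sk}$ commutes with the $L^{p,r}$ norm of a single block $\dot\Delta_k f$ and with the modulus in the $\ell^q$ sum. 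Thus the only difference between the Besov-Lorentz and the Triebel-Lizorkin-Lorentz norm of $f$ lies in the order in which the $\ell^q$ and $L^{p,r}$ norms are taken on the dyadic pieces $\{a_k\}$.

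With this identification, item (1) is an immediate consequence of item (1) of Proposition \ref{embl} applied to $\{a_k\}$: under the hypothesis $q_0\le\min\{p,q_1,r_1\}$ together with one of (i)--(iii), one has
\[
\|\{a_k\}\|_{L^{p,r_1}(\dot l_{q_1}^0)}\lesssim \|\{a_k\}\|_{\dot l_{q_0}^0(L^{p,r_0})},
\]
which translates verbatim into $\dot B^s_{(p,r_0),q_0}\hookrightarrow \dot F^s_{(p,r_1),q_1}$. Item (2) is obtained symmetrically from item (2) of Proposition \ref{embl}, using the same sequence $\{a_k\}$ and the dual condition $q_1\ge \min\{p,q_0,r_0\}$.

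No substantive obstacle is expected: once the weight has been absorbed, the corollary is a direct restatement of Proposition \ref{embl} for the particular sequences arising from Littlewood-Paley pieces, and the conditions on $p,q_0,q_1,r_0,r_1$ carry over without modification. The only point requiring care is consistency with the paper's sign convention for the weight in $\dot l_q^s(X)$, so that the reduction to the $s=0$ case goes through cleanly; apart from this bookkeeping, the proof is a one-line application of Proposition \ref{embl}.
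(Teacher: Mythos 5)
Your proposal is correct and coincides with the paper's own argument: the paper likewise obtains the corollary by applying Proposition \ref{embl} to the weighted Littlewood--Paley sequence (written there as $\{2^{sk}\dot{\Delta}_k f\}_{k\in\mathbb Z}$, the sign difference from your $2^{-sk}$ being exactly the convention bookkeeping you flag, given the paper's definition of $\dot l^s_q$ with weight $2^{-ks}$). Nothing further is needed.
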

Using Corollaries \ref{equHF} and \ref{embBF}, we can get the embedding result between the homogeneous Besov-Lorentz spaces and Sobolev-Lorentz spaces, which will be omitted. For later use, we only state the embedding result between the homogeneous Besov-Lorentz spaces $\dot{B}_{(p,r_0),2}^s$ and Sobolev-Lorentz spaces $\dot{H}_{p,r_1}^s$.

\begin{corollary}\label{embBH}
Let $1<p<\infty$ and $s\in \mathbb R$.
\begin{enumerate}
  \item There holds the embedding $\dot{B}_{(p,r_0),2}^s\hookrightarrow \dot{H}_{p,r_1}^s$, if one of the following conditions is satisfied:

     ~~~$(i)~p>2~\textnormal{and}~r_1\ge 2,~(ii)~p=2~\textnormal{and}~r_1\ge 2\ge r_0$.

     In particular, we have the embedding $\dot{B}_{(p,2),2}^s\hookrightarrow \dot{H}_{p,2}^s$, if $p\ge2$.
  \item There holds the embedding $\dot{H}_{p,r_1}^s\hookrightarrow \dot{B}_{(p,r_0),2}^s$, if one of the following conditions is satisfied:

     ~~~$(i)~p<2~\textnormal{and}~r_0\le 2,~(ii)~p=2~\textnormal{and}~r_1\ge 2\ge r_0$.

     In particular, we have the embedding $\dot{B}_{(p,2),2}^s\hookrightarrow \dot{H}_{p,2}^s$, if $p\le2$.
\end{enumerate}\end{corollary}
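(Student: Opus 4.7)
The plan is to reduce the statement to the already-established Corollary \ref{embBF}, which concerns embeddings between Besov-Lorentz and Triebel-Lizorkin-Lorentz spaces, via the identification from Corollary \ref{equHF}. Namely, for any admissible $r$, $\|f\|_{\dot{H}_{p,r}^s}\sim \|f\|_{\dot{F}_{(p,r),2}^s}$, so Corollary \ref{embBH} is nothing but Corollary \ref{embBF} specialized to $q_0=q_1=2$ (the second index in both the Besov and Triebel-Lizorkin spaces is fixed at $2$), after replacing one Triebel-Lizorkin-Lorentz factor by the corresponding Sobolev-Lorentz space.

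For part (1), I would write the chain
\[
\dot{B}_{(p,r_0),2}^s\hookrightarrow \dot{F}_{(p,r_1),2}^s\hookrightarrow \dot{H}_{p,r_1}^s,
\]
where the first embedding comes from Corollary \ref{embBF}(1) with $q_0=q_1=2$ and the second from Corollary \ref{equHF}. The main condition $q_0\le \min\{p,q_1,r_1\}$ collapses to $2\le \min\{p,2,r_1\}$, forcing $p\ge 2$ and $r_1\ge 2$; moreover, alternative (iii) of Corollary \ref{embBF}(1) is vacuous because $q_0=q_1$, so only (i) $p\neq q_1=2$ (i.e.\ $p>2$) and (ii) $p=q_1=2\ge r_0$ survive, which are exactly the two scenarios listed in Corollary \ref{embBH}(1). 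The ``in particular'' case follows by specializing $r_0=r_1=2$ and observing that the remaining hypotheses reduce to $p\ge 2$.

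For part (2) the scheme is symmetric:
\[
\dot{H}_{p,r_1}^s\hookrightarrow \dot{F}_{(p,r_1),2}^s\hookrightarrow \dot{B}_{(p,r_0),2}^s,
\]
now invoking Corollary \ref{embBF}(2) with $q_0=q_1=2$. Here the condition $q_1\ge \min\{p,q_0,r_0\}$ becomes automatic, alternative (iii) is again vacuous, and the two remaining case conditions of Corollary \ref{embBF}(2) translate into conditions (i) $p<2$ with $r_0\le 2$ and (ii) $p=2$ with $r_1\ge 2\ge r_0$, matching the statement of Corollary \ref{embBH}(2); the ``in particular'' case again follows from $r_0=r_1=2$.

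I do not expect a real obstacle: the argument is a mechanical translation of indices between the two corollaries. The only point requiring a little care is bookkeeping, since the role of $r_0$ and $r_1$ in Corollary \ref{embBF} (where $r_0\le r_1$ is fixed as the standing hypothesis) has to be matched correctly with the Lorentz-parameters in Corollary \ref{embBH}; once this correspondence is sorted out, each listed case in Corollary \ref{embBH} verifies one of the alternatives (i)--(iii) of the relevant part of Corollary \ref{embBF}.
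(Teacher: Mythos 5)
Your overall strategy --- rewriting $\dot{H}^{s}_{p,r}$ as $\dot{F}^{s}_{(p,r),2}$ via Corollary \ref{equHF} and then invoking Corollary \ref{embBF} with $q_0=q_1=2$ --- is exactly the route the paper indicates (it omits the proof with precisely this reference), and your treatment of part (1) is correct, granting the standing hypothesis $r_0\le r_1$ of Corollary \ref{embBF}, which you rightly flag as having to be carried along (it is in fact needed in case (i) of part (1) as well, where the statement does not mention it).

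The bookkeeping for part (2), however, does not come out the way you claim, and this is a genuine gap rather than a notational nuisance. To obtain $\dot{F}^{s}_{(p,r_1),2}\hookrightarrow \dot{B}^{s}_{(p,r_0),2}$ from Corollary \ref{embBF}(2) you must identify the \emph{source} Lorentz exponent $r_1$ with the smaller index of that corollary and the \emph{target} exponent $r_0$ with the larger one; the standing hypothesis therefore forces $r_1\le r_0$, and case (ii) of Corollary \ref{embBF}(2) becomes ``$p=2\ge r_0\ (\ge r_1)$'', which is the reverse of the condition ``$r_1\ge 2\ge r_0$'' that you assert it translates into. This cannot be repaired by relabelling: for $s=0$, $p=2$, $r_0=2$ and $r_1>2$ the claimed embedding of part (2)(ii) would read $L^{2,r_1}=\dot{H}^{0}_{2,r_1}\hookrightarrow \dot{B}^{0}_{(2,2),2}=\dot{B}^{0}_{2,2}=L^{2}$, which is false because $L^{2,r_1}\supsetneq L^{2}$. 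What your chain actually establishes is $\dot{H}^{s}_{p,r_1}\hookrightarrow\dot{B}^{s}_{(p,r_0),2}$ under ``$r_1\le r_0$ together with ($p\ne 2$, or $p=2\ge r_0$)''; this does cover the only instance used later in the paper, namely $r_0=r_1=2$ with $p\le 2$ (note also that the ``in particular'' sentence of part (2) is printed with the arrow in the wrong direction, and your proof silently produces the corrected version). You should state explicitly which cases of part (2) your translation yields instead of asserting that the listed conditions match, since as written they do not.
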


\subsection{Regular Strichartz estimates in Lorentz-type spaces}

In this subsection, we derive the regular Strichartz estimates for $S(t)=e^{\pm it\Delta^{2} } $ in the framework of Lorentz-type spaces.
First, we have the following time decay estimate for  $S(t)=e^{\pm it\Delta^{2} }$ in Besov-Lorentz spaces.
\begin{proposition}\label{decayE}
Let $d\in \mathbb N$, $2\le p<\infty$ and $S(t)=e^{\pm it\Delta^{2} }$. Then we have
$$
\left\|S(t)f\right\|_{\dot{B}_{(p,2),2}^{s(p)}}\lesssim |t|^{-d(1/2-1/p)}\left\|f\right\|_{\dot{B}_{(p',2),2}^{-s(p)}},
$$
where $s(\cdot)=d(1/2-1/\cdot)$.
\end{proposition}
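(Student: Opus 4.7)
The plan is to reduce the proposition to a uniform frequency-localized dispersive bound on each Littlewood--Paley block and then recombine via an $\ell^{2}_k$ summation. Since $S(t)$ is a Fourier multiplier it commutes with $\dot{\Delta}_{k}$, and unfolding the definition of the Besov-Lorentz norm gives
\[
\left\|S(t)f\right\|_{\dot{B}^{s(p)}_{(p,2),2}}^{2}=\sum_{k\in\mathbb Z}2^{2s(p)k}\left\|S(t)\dot{\Delta}_{k}f\right\|_{L^{p,2}}^{2},\qquad \left\|f\right\|_{\dot{B}^{-s(p)}_{(p',2),2}}^{2}=\sum_{k\in\mathbb Z}2^{-2s(p)k}\left\|\dot{\Delta}_{k}f\right\|_{L^{p',2}}^{2}.
\]
The proposition thus follows once one establishes the block estimate
\[
\left\|S(t)\dot{\Delta}_{k}f\right\|_{L^{p,2}}\lesssim 2^{-2s(p)k}|t|^{-d(1/2-1/p)}\left\|\dot{\Delta}_{k}f\right\|_{L^{p',2}} \quad (\ast)
\]
uniformly in $k\in\mathbb Z$, since multiplying $(\ast)$ by $2^{s(p)k}$ produces exactly the weight $2^{-s(p)k}$ on the right, and an $\ell^{2}_k$ summation then matches the two Besov-Lorentz norms.

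To prove $(\ast)$ I would interpolate by the real method between the $L^{2}$-isometry $\left\|S(t)g\right\|_{L^{2}}=\left\|g\right\|_{L^{2}}$ and the dyadic $L^{1}\to L^{\infty}$ dispersive bound $\left\|S(t)\dot{\Delta}_{k}f\right\|_{L^{\infty}}\lesssim 2^{-kd}|t|^{-d/2}\left\|\dot{\Delta}_{k}f\right\|_{L^{1}}$. The latter is standard: at unit frequency the kernel $K_{1}(t,x)=(2\pi)^{-d}\int_{\mathbb R^{d}} e^{ix\cdot\xi+it|\xi|^{4}}\varphi(\xi)\,d\xi$ satisfies $\left\|K_{1}(t,\cdot)\right\|_{L^{\infty}}\lesssim |t|^{-d/2}$ by stationary phase, since the Hessian of the phase $|\xi|^{4}$ has eigenvalues $4|\xi|^{2}$ (tangential) and $12|\xi|^{2}$ (radial), hence is non-degenerate on $|\xi|\sim 1$. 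The parabolic rescaling $\xi = 2^{k}\eta$, together with $t\mapsto 2^{4k}t$, then transfers this bound to frequency $2^{k}$, producing the extra dyadic prefactor $2^{-kd}$. Applying real interpolation with second index $2$ to the two endpoint estimates, together with the identities
\[
(L^{1},L^{2})_{2/p,\,2}=L^{p',2},\qquad (L^{\infty},L^{2})_{2/p,\,2}=L^{p,2}
\]
(particular cases of \eqref{GrindEQ__3_2_}), yields $(\ast)$: the resulting power of $2^{k}$ is $2^{-kd(1-2/p)}=2^{-2s(p)k}$ and the power of $|t|$ is $|t|^{-d(1-2/p)/2}=|t|^{-d(1/2-1/p)}$, exactly as needed.

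Given $(\ast)$, squaring, multiplying by $2^{2s(p)k}$, summing in $k\in\mathbb Z$ and taking square roots delivers the stated inequality. The only real obstacle I anticipate is bookkeeping in the scaling step: one must verify that the dyadic prefactor $2^{-kd}$ from the $L^{1}\to L^{\infty}$ bound interpolates into precisely the exponent $2s(p)k$ needed to cancel the combined shift $s(p)k-(-s(p)k)=2s(p)k$ between the target and source Besov-Lorentz norms, so that $t$ enters the final inequality only through $|t|^{-d(1/2-1/p)}$; the remaining ingredients --- stationary phase, real interpolation of Lebesgue spaces, and the commutation of $S(t)$ with $\dot{\Delta}_{k}$ --- are all standard.
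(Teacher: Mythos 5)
Your proof is correct, but it takes a genuinely different (and more self-contained) route than the paper's. The paper simply cites the known biharmonic decay estimate in ordinary Besov spaces, $\|S(t)f\|_{\dot{B}_{p,2}^{s(p)}}\lesssim |t|^{-d(1/2-1/p)}\|f\|_{\dot{B}_{p',2}^{-s(p)}}$, and then upgrades it to Besov--Lorentz spaces in one stroke by real interpolation between two Besov endpoints via Lemma~\ref{intB}, using that $s(p)$ and the decay exponent are both affine in $1/p$. You instead rebuild the estimate from first principles: a frequency-localized stationary-phase bound $\|S(t)\dot{\Delta}_k f\|_{L^\infty}\lesssim 2^{-kd}|t|^{-d/2}\|\dot{\Delta}_k f\|_{L^1}$ (the parabolic rescaling $\xi=2^k\eta$, $t\mapsto 2^{4k}t$ and the non-degeneracy of the Hessian of $|\xi|^4$ on the annulus are both right), interpolate block-by-block against the $L^2$ isometry using $(L^1,L^2)_{2/p,2}=L^{p',2}$ and $(L^\infty,L^2)_{2/p,2}=L^{p,2}$, and then sum in $\ell^2_k$. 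The bookkeeping you were worried about checks out: $\bigl(2^{-kd}\bigr)^{1-2/p}=2^{-2s(p)k}$ is exactly the shift needed between the two Besov--Lorentz weights, and $\bigl(|t|^{-d/2}\bigr)^{1-2/p}=|t|^{-d(1/2-1/p)}$. What your approach buys is self-containment (you do not need to quote a Besov-space dispersive estimate, and you get the Lorentz refinement directly at the Lebesgue level); what the paper's approach buys is brevity and that it never touches kernels. Two small cosmetic points: the kernel you call $K_1$ should use the annular cutoff $\varphi_0$ rather than the bump $\varphi$, since the Hessian of $|\xi|^4$ degenerates at the origin (your stated justification already makes clear this is what you mean); and the identities $(L^1,L^2)_{2/p,2}=L^{p',2}$, $(L^\infty,L^2)_{2/p,2}=L^{p,2}$ are not literally instances of \eqref{GrindEQ__3_2_} (which is stated for interior exponents $1<p_0,p_1<\infty$), but they are standard --- cite e.g.\ Theorem 5.2.1 of \cite{BL76} instead. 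Finally, note that you are using the standard convention $2^{+ks}$ in $\dot{l}_q^s$, whereas Section 2 of the paper writes $2^{-ks}$; your usage is the one consistent with the rest of the paper and its references, so this is a sign typo in the paper's definition rather than an error of yours.
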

\begin{proof}
The case $p=2$ is trivial, due to the fact that $\dot{B}_{(p,2),2}^{s(p)}=\dot{B}_{(p',2),2}^{-s(p)}=L^2$.
When $p>2$, the result follows from Lemma \ref{intB} and the following regular Strichartz estimates in Besov spaces (see, e.g. \cite[Proposition 3.3]{WHHG11}):
\begin{equation} \nonumber
\left\|S(t)f\right\|_{\dot{B}_{p,2}^{s(p)}}\lesssim |t|^{-d(1/2-1/p)}\left\|f\right\|_{\dot{B}_{p',2}^{-s(p)}},~\forall p\in (2,\infty).
\end{equation}
This completes the proof.
\end{proof}

To derive regular Strichartz estimates, we recall the following well-known result of Keel-Tao \cite{KT98}.
\begin{lemma}[\cite{KT98}]\label{ktS}
Let $\alpha>0$, $H$ be a Hilbert space and $B_0,B_1$ be Banach spaces. Suppose that for each time $t$ we have an operator $U(t):H\to B_0^*$ such that
\begin{equation}\label{GrindEQ__3_7_}
\left\|U(t)\right\|_{H\to B_0^*}\lesssim 1,
\end{equation}
\begin{equation}\label{GrindEQ__3_8_}
\left\|U(t)(U(s))^*\right\|_{B_1\to B_1^*}\lesssim |t-s|^{-\alpha},
\end{equation}
where $(U(s))^*$ denotes the adjoint of $U(s)$ and $B_i^* (i=0,1)$ denotes the dual space of $B_i$.
Let $B_\theta$ denote the real interpolation space $(B_0,B_1)_{\theta,2}$. Then we have the estimates
$$
\left\|U(t)f\right\|_{L^q(\mathbb R, B_{\theta}^*)}\lesssim \left\|f\right\|_{H},
$$
$$
\left\|\int_{s<t}{U(t)(U(s))^*F(s)ds}\right\|_{L^q(\mathbb R, B_{\theta}^*)}\lesssim \left\|F\right\|_{L^{\tilde{q}'}(\mathbb R,B_{\tilde{\theta}})},
$$
whenever $0\le\theta\le 1$, $2\le q=\frac{2}{\alpha \theta}$, $(q,\theta,\alpha)\neq (2,1,1)$, and similarly for $(\tilde{q},\tilde{\theta})$.
\end{lemma}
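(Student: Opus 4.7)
The plan is to follow the classical $TT^{*}$ plus bilinear real interpolation strategy of Keel–Tao. First, by duality the homogeneous bound $\|U(t)f\|_{L^{q}(\mathbb R,B_{\theta}^{*})}\lesssim\|f\|_{H}$ is equivalent to the bilinear estimate
$$
\Bigl|\iint \langle U(t)(U(s))^{*}F(s),G(t)\rangle\,ds\,dt\Bigr|\lesssim\|F\|_{L^{q'}(\mathbb R,B_{\theta})}\|G\|_{L^{q'}(\mathbb R,B_{\theta})}.
$$
The energy hypothesis \eqref{GrindEQ__3_7_}, via $TT^{*}$, gives $\|U(t)(U(s))^{*}\|_{B_{0}\to B_{0}^{*}}\lesssim 1$; combined with the dispersive hypothesis \eqref{GrindEQ__3_8_}, namely $\|U(t)(U(s))^{*}\|_{B_{1}\to B_{1}^{*}}\lesssim|t-s|^{-\alpha}$, bilinear real interpolation performed symmetrically at level $\theta$ yields
$$
\|U(t)(U(s))^{*}\|_{B_{\theta}\to B_{\theta}^{*}}\lesssim|t-s|^{-\alpha\theta}.
$$

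In the non-endpoint range $q>2$ (where $\alpha\theta=2/q<1$), the kernel $|t-s|^{-\alpha\theta}$ belongs to weak $L^{q/2}$, so the Hardy–Littlewood–Sobolev inequality applied in the time variable closes the bilinear estimate at once and produces the full homogeneous Strichartz bound. The retarded Duhamel estimate with $(q,\theta)\neq(\tilde q,\tilde\theta)$ in the strictly non-endpoint regime then follows from the full bilinear bound together with the Christ–Kiselev lemma, which upgrades the symmetric-time bilinear estimate to the half-line integral provided $q>\tilde q'$.

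The main obstacle is the endpoint $q=2$, where $\alpha\theta=1$ and HLS fails logarithmically. The strategy here is the Keel–Tao dyadic atomic decomposition: one writes $\iint=\sum_{k\in\mathbb Z}\iint_{|t-s|\sim 2^{k}}$ and, for each scale $k$, uses the interpolated dispersive bound with indices $\theta_{\pm}$ lying slightly on either side of $\theta$, so that the kernel gain is $2^{-k\alpha\theta_{\pm}}$. Combining these one-sided bounds with the atomic characterisation of $B_{\theta}=(B_{0},B_{1})_{\theta,2}$ (decomposing $F(s)$ and $G(t)$ as sums of atoms with prescribed levels in $B_{0}$ and $B_{1}$), the scale-$k$ contribution becomes geometric in $k$ and summable. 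The excluded triple $(q,\theta,\alpha)=(2,1,1)$ is precisely the configuration at which this geometric summation diverges, because $\theta=1$ collapses the interpolation space and removes the slack needed for $\theta_{\pm}$.

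Finally, the retarded endpoint estimate $\int_{s<t}U(t)(U(s))^{*}F(s)\,ds$ in the diagonal case $q=\tilde q=2$ is \emph{not} covered by Christ–Kiselev and has to be extracted directly from the dyadic argument, by checking that the sharp-cutoff $\mathbf 1_{s<t}$ can be inserted dyadic-piece by dyadic-piece without destroying the summation, which is essentially the content of the atomic bilinear argument above.
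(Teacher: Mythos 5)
The paper offers no proof of this lemma: it is quoted directly from Keel--Tao \cite{KT98}, and your outline faithfully reproduces the architecture of that proof --- $TT^{*}$ duality reducing to a bilinear form, operator interpolation of the energy and dispersive bounds giving the $|t-s|^{-\alpha\theta}$ kernel, Hardy--Littlewood--Sobolev in time for $q>2$, Christ--Kiselev for the non-diagonal retarded estimates, and the dyadic bilinear summation over $|t-s|\sim 2^{k}$ with perturbed exponents $\theta_{\pm}$ at the endpoint $q=2$, including the correct explanation of why $(q,\theta,\alpha)=(2,1,1)$ must be excluded. Your reconstruction is accurate in structure and matches the cited argument, with the understood caveat that the endpoint summation lemma (the technical core of \cite{KT98}) is sketched rather than carried out in detail.
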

Using Proposition \ref{decayE} and Lemma \ref{ktS}, we have the following regular Strichartz estimates.
\begin{proposition}[Regular Strichartz estimates in Lorentz-type spaces]\label{regulS}
Let $S(t)=e^{\pm it\Delta^{2} }$ and $s\in \mathbb R$.
Then for any Schr\"{o}dinger admissible pairs $(q,r)\in S$ and $(\tilde{q},\tilde{r})\in S$, we have
\begin{equation} \label{GrindEQ__3_9_}
\left\| S(t)\phi \right\| _{L^{q}(\mathbb R,\dot{B}_{(r,2),2}^{s+2/p})} \lesssim\left\| \phi \right\| _{\dot{H}^{s} },
\end{equation}
\begin{equation} \label{GrindEQ__3_10_}
\left\|\int_{0}^{t}S(t-\tau)f(\tau)d\tau\right\|_{L^{q}(\mathbb R, \dot{B}_{(r,2),2}^{s+2/q})}\lesssim\left\|f\right\|_{L^{\tilde{q}'}(\mathbb R, \dot{B}_{(\tilde{r}',2),2}^{s-2/\tilde{q}})}.
\end{equation}
In \eqref{GrindEQ__3_9_} and \eqref{GrindEQ__3_10_}, replacing homogeneous Besov-Lorentz spaces by corresponding Sobolev-Lorentz spaces, the results still hold.
\end{proposition}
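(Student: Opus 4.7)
The plan is to combine the dispersive estimate of Proposition \ref{decayE} with the abstract Keel--Tao framework (Lemma \ref{ktS}), and then transfer the resulting Besov-Lorentz estimates to Sobolev-Lorentz spaces via the embeddings of Corollary \ref{embBH}. First I would reduce to the case $s=0$: since $S(t)$ is a Fourier multiplier it commutes with $(-\Delta)^{s/2}$, so Lemma \ref{isoB} gives $\|S(t)\phi\|_{\dot{B}^{s+2/q}_{(r,2),2}} \sim \|S(t)(-\Delta)^{s/2}\phi\|_{\dot{B}^{2/q}_{(r,2),2}}$ while $\|(-\Delta)^{s/2}\phi\|_{L^{2}}=\|\phi\|_{\dot{H}^{s}}$ by definition; the inhomogeneous estimate reduces in the same fashion by moving $(-\Delta)^{s/2}$ inside the time integral and applying Lemma \ref{isoB} to both sides.

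For the reduced case $s=0$, I would apply Lemma \ref{ktS} with $H=L^{2}$, $U(t)=S(t)$, $B_{0}=L^{2}$, and $B_{1}=\dot{B}^{-s(p_{0})}_{(p_{0}',2),2}$, where $p_{0}=2d/(d-2)$ for $d\ge 3$ (so that the decay exponent is $\alpha:=s(p_{0})=1$), or any sufficiently large $p_{0}<\infty$ for $d\le 2$. Hypothesis \eqref{GrindEQ__3_7_} is immediate from Plancherel, while $U(t)U(s)^{*}=S(t-s)$ together with Proposition \ref{decayE} and the standard duality $(\dot{B}^{-s(p_{0})}_{(p_{0}',2),2})^{*}=\dot{B}^{s(p_{0})}_{(p_{0},2),2}$ yields \eqref{GrindEQ__3_8_}. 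Using the identification $L^{2}=\dot{B}^{0}_{(2,2),2}$ and Lemma \ref{intB},
\[
B_{\theta}=(L^{2},B_{1})_{\theta,2}=\dot{B}^{-\theta\alpha}_{(\tilde p,2),2},\qquad \tfrac{1}{\tilde p}=\tfrac{1-\theta}{2}+\tfrac{\theta}{p_{0}'}.
\]
For a prescribed $(q,r)\in S$ with $q>2$, choosing $\theta=2/q$ makes the Keel--Tao admissibility $q=2/(\alpha\theta)$ automatic, and a direct computation shows $\tilde p'=r$ via Schr\"{o}dinger admissibility together with $\theta\alpha=2/q$, so $B_{\theta}^{*}=\dot{B}^{2/q}_{(r,2),2}$. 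A second parameter $\tilde\theta=2/\tilde q$ handles the inhomogeneous pair $(\tilde q,\tilde r)$; Lemma \ref{ktS}, combined with a standard Christ--Kiselev argument to pass from $\int_{s<t}$ to $\int_{0}^{t}$, then delivers the Besov-Lorentz versions of \eqref{GrindEQ__3_9_}--\eqref{GrindEQ__3_10_} at $s=0$.

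Finally, since $(q,r)\in S$ forces $r\ge 2$, Corollary \ref{embBH}(1) gives $\dot{B}^{t}_{(r,2),2}\hookrightarrow \dot{H}^{t}_{r,2}$, allowing the left-hand norms in both estimates to be replaced by their Sobolev-Lorentz counterparts; dually, Corollary \ref{embBH}(2) supplies $\dot{H}^{t}_{\tilde r',2}\hookrightarrow \dot{B}^{t}_{(\tilde r',2),2}$ since $\tilde r'\le 2$, which upgrades the inhomogeneous right-hand side. The main obstacle I anticipate lies in the Keel--Tao step, namely rigorously establishing the Besov-Lorentz duality $(\dot{B}^{-\alpha}_{(p_{0}',2),2})^{*}=\dot{B}^{\alpha}_{(p_{0},2),2}$ needed to cast Proposition \ref{decayE} into the form \eqref{GrindEQ__3_8_}, together with the double endpoint $(q,\theta,\alpha)=(2,1,1)$ which is excluded by Lemma \ref{ktS} and requires a bespoke Keel--Tao-style endpoint argument in the Besov-Lorentz setting.
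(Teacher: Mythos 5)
Your proposal follows essentially the same route as the paper's proof: apply the decay estimate of Proposition \ref{decayE} together with the Keel--Tao Lemma \ref{ktS}, taking $H=L^2$, $B_0=L^2$ and $B_1=\dot{B}^{-s(p)}_{(p',2),2}$ with $p=\tfrac{2d}{d-2}$ for $d\ge 3$ (and a large finite $p$ for $d\le 2$), identify the real-interpolation spaces $B_\theta$ via Lemma \ref{intB} and the Besov--Lorentz duality $(\dot{B}^s_{(p,2),2})^*=\dot{B}^{-s}_{(p',2),2}$, lift in regularity via Lemma \ref{isoB}, and pass from Besov--Lorentz to Sobolev--Lorentz norms by Corollary \ref{embBH}. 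Two small comments on the caveats you raise: the Christ--Kiselev step is not needed here, since Lemma \ref{ktS} already furnishes the retarded estimate $\int_{s<t}$, and the Duhamel integral $\int_0^t$ is recovered by restricting $f$ to $\{\tau>0\}$ (resp.\ $\{\tau<0\}$ with time reversal); and the double-endpoint triple $(q,\theta,\alpha)=(2,1,1)$ is indeed excluded from Lemma \ref{ktS}, which is a genuine subtlety because the choice $p=\tfrac{2d}{d-2}$ forces $\alpha=1$, but the paper's own proof applies Lemma \ref{ktS} without commenting on this, so your proposal is no less complete than the paper on that point.
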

\begin{proof}
Noticing that
$$
(\dot{B}_{p,2}^s)^*=\dot{B}_{p',2}^{-s},~\forall p\in (1,\infty),~\forall s\in \mathbb R,
$$
it follows from Lemma \ref{intB} and the duality theorem (see e.g. \cite[Theorem 3.7.1]{BL76}) that
\begin{equation} \nonumber
(\dot{B}_{(p,2),2}^s)^*=\dot{B}_{(p',2),2}^{-s},~\forall p\in (1,\infty),~\forall s\in \mathbb R.
\end{equation}
Hence, putting $B_0=L^2$ and $B_1=\dot{B}_{(p',2),2}^{-s(p)}$, we have

$$
B_{\tilde{\theta}}=\dot{B}_{(\tilde{r}',2),2}^{-s(\tilde{r})}=\dot{B}_{(p',2),2}^{-2/\tilde{q}},~B_{\theta}^*=\dot{B}_{(r,2),2}^{s(r)}=\dot{B}_{(r,2),2}^{2/q},
$$
where
\footnote[4]{~$p=\infty^{-}$ denotes the fixed number $p>0$ large enough.}
$$
p=\left\{\begin{array}{ll}
{\frac{2d}{d-2}},~&{{\rm if}~d\ge 3,}\\
{\infty^{-},}~&{{\rm if}~d=1,2,}
\end{array}\right.
$$
and
$$
\frac{1}{r}=\frac{1-\theta}{2}+\frac{1}{p},~\frac{1}{\tilde{r}}=\frac{1-\tilde{\theta}}{2}+\frac{1}{\tilde{p}}.
$$
And it follows from Proposition \ref{decayE} that $S(t)$ satisfies \eqref{GrindEQ__3_7_} and \eqref{GrindEQ__3_8_} with
$\alpha=d\left(\frac{1}{2}-\frac{1}{p}\right)$.
We can also see that $2\le q=\frac{2}{\alpha\theta}$ and $2\le \tilde{q}=\frac{2}{\alpha\tilde{\theta}}$, due to the fact that $(q,r)\in S$ and $(\tilde{q},\tilde{r})\in S$.
Hence, \eqref{GrindEQ__3_9_} and \eqref{GrindEQ__3_10_} follow from Lemmas \ref{ktS} and \ref{GrindEQ__3_5_}.
Using Corollary \ref{embBH}, we can see that \eqref{GrindEQ__3_9_} and \eqref{GrindEQ__3_10_} still hold if we replace homogeneous Besov-Lorentz spaces by the corresponding Sobolev-Lorentz spaces. This completes the proof.
\end{proof}
\begin{remark}\label{rem 3.12.}
\textnormal{The regular Strichartz estimates in Besov or Sobolev spaces were initially obtained by \cite{MZ072,P07}. See \cite[Proposition 2]{MZ072} and \cite[Proposition 3.2]{P07}. Proposition \ref{regulS} extends these results to the Lorentz-type spaces.}
\end{remark}
\begin{corollary}\label{cor 3.13.}
Let $S(t)=e^{\pm it\Delta^{2} }$ and $s\in \mathbb R$.
Then, for any $(\bar{q},\bar{r})\in B$ and $(\tilde{q},\tilde{r})\in S$, we have
\begin{equation} \label{GrindEQ__3_11_}
\left\|\int_{0}^{t}S(t-\tau)f(\tau)d\tau\right\|_{L^{\bar{q}}(\mathbb R,\dot{H}_{\bar{r},2}^{s})}\lesssim\left\|f\right\|_{L^{\tilde{q}'}(\mathbb R,\dot{H}_{\tilde{r}',2}^{s-2/\tilde{q}})}.
\end{equation}
\end{corollary}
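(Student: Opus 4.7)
The plan is to deduce Corollary 3.13 from Proposition 3.11 (its Sobolev–Lorentz version) by introducing an auxiliary Schrödinger admissible pair and then invoking a Sobolev–Lorentz embedding. The estimate one wants involves a biharmonic admissible pair on the left and a Schrödinger admissible pair on the right; Proposition 3.11 only gives Schrödinger admissible pairs on both sides, but with a regularity gain of $2/q$ on the left. That gain is exactly what is needed to pay for a Sobolev embedding that converts the Schrödinger exponent into the biharmonic one while dropping the extra regularity.

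More concretely, given $(\bar q,\bar r)\in B$, I would define $r$ by the Schrödinger relation $\frac{2}{\bar q}=\frac{d}{2}-\frac{d}{r}$. Since $(\bar q,\bar r)\in B$ means $\frac{4}{\bar q}=\frac{d}{2}-\frac{d}{\bar r}$, one has $\frac{d}{r}=\frac{d}{\bar r}+\frac{2}{\bar q}$, so $r\le \bar r$ and a direct check of the endpoint ranges shows $(\bar q,r)\in S$. Now apply the Sobolev–Lorentz form of \eqref{GrindEQ__3_10_} in Proposition \ref{regulS} with $(q,r)=(\bar q,r)$ and the given $(\tilde q,\tilde r)\in S$, at the regularity parameter $s$ appearing in the corollary:
\begin{equation}\nonumber
\left\|\int_{0}^{t}S(t-\tau)f(\tau)\,d\tau\right\|_{L^{\bar q}(\mathbb R,\dot H_{r,2}^{s+2/\bar q})}\lesssim \|f\|_{L^{\tilde q'}(\mathbb R,\dot H_{\tilde r',2}^{s-2/\tilde q})}.
\end{equation}

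To conclude, I would invoke the Sobolev–Lorentz embedding of Lemma \ref{lem 2.4.} with $s_1=s+2/\bar q$, $p_1=r$, $s_2=s$, $p_2=\bar r$, and the Lorentz index $2$ preserved on both sides. The scaling condition $s_1-\frac{d}{p_1}=s_2-\frac{d}{p_2}$ reduces, using the identities just established, to $\frac{d}{\bar r}=\frac{d}{r}-\frac{2}{\bar q}$, which holds by construction; and $p_1=r\le \bar r=p_2$ was verified above. Composing this embedding with the previous display yields exactly \eqref{GrindEQ__3_11_}.

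No step is genuinely difficult; the main point is simply to recognize the arithmetic that matches the regularity gain of $2/\bar q$ in Proposition \ref{regulS} with the Sobolev–Lorentz embedding from the Schrödinger scale to the biharmonic scale. The only small thing to watch is that the chosen auxiliary pair $(\bar q,r)$ lies inside the Schrödinger admissible range in the relevant dimensions (including the endpoints), but this is an immediate verification from the definitions in \eqref{GrindEQ__1_3_}--\eqref{GrindEQ__1_6_}.
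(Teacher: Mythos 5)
Your proposal is correct and follows essentially the same route as the paper: introduce the auxiliary exponent $r$ so that $(\bar q,r)\in S$, apply the Sobolev--Lorentz form of the retarded estimate \eqref{GrindEQ__3_10_} to gain regularity $2/\bar q$, and then trade that gain for the embedding $\dot H^{s+2/\bar q}_{r,2}\hookrightarrow \dot H^{s}_{\bar r,2}$ via Lemma \ref{lem 2.4.}. The arithmetic identities you verify ($\tfrac{d}{r}=\tfrac{d}{\bar r}+\tfrac{2}{\bar q}$ and the admissibility of $(\bar q,r)$) match the paper's construction exactly.
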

\begin{proof}
putting $\frac{d+1}{r}:=\frac{d}{2}+\frac{d}{\bar{r}}$, we can see that $(\bar{q},r)\in S$. Furthermore, we can see that $\dot{H}_{r,2}^{s+2/\bar{q}}\hookrightarrow \dot{H}_{\bar{r},2}^s$ by using Lemma \ref{lem 2.4.}. Hence, the result follows from Lemma \ref{regulS}. This completes the proof.
\end{proof}
\section{Proof of Theorem \ref{thm 1.1.}}\label{sec 4.}
In this section, we prove Theorem \ref{thm 1.1.}.
To this end, we establish the following nonlinear estimates.

\begin{lemma}\label{lem 4.1.}
Let $d\ge 3$, $1\le s< \frac{d}{2}$, $0<b<\min\{4,2+\frac{d}{2}-s\}$ and $\sigma=\frac{8-2b}{d-2s}$. If $\sigma$ is not an even integer, assume that $\sigma>\left\lceil s\right\rceil -2$. Then for any interval $I(\subset \mathbb R)$, we have
\begin{equation} \nonumber
\left\||x|^{-b}|u|^{\sigma}u\right\|_{L^{m_0'}(I,\dot{H}_{n_0',2}^{s-1})}
\lesssim \left\|u\right\|^{\sigma+1}_{L^{q_0}(I,\dot{H}_{r_0,2}^{s})},
\end{equation}
where $(m_0,n_0)\in S$ and $(q_0,r_0)\in B_0$ with $n_0=\frac{2d}{d-2}$ and $\frac{1}{r_0}=\frac{1}{2}-\frac{2}{d(\sigma+1)}$.
\end{lemma}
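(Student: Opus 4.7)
The plan is to distribute the $s-1$ derivatives between the weight $|x|^{-b}$ and the power $|u|^{\sigma}u$ via the fractional product rule (Lemma~\ref{lem 2.5.}), bound the weight by Lemma~\ref{lem 2.8.}, handle the nonlinearity by the Lorentz fractional chain rule (Corollary~\ref{cor 2.7.}) or by pointwise Lorentz H\"older, and then close the exponents using Sobolev embedding (Lemma~\ref{lem 2.4.}), the Lorentz embedding of Lemma~\ref{lem 2.2.}, and H\"older in time. Before starting I fix the bookkeeping: since $(q_0,r_0)\in B_0$ together with $\sigma(d-2s)=8-2b$ forces $q_0=2(\sigma+1)$, and $(m_0,n_0)\in S$ with $n_0=\frac{2d}{d-2}$ forces $m_0=m_0'=2$, one has $\frac{\sigma+1}{q_0}=\frac{1}{2}=\frac{1}{m_0'}$, so H\"older in time with $\sigma+1$ factors of $u$ matches the temporal exponent exactly. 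Also $\frac{1}{n_0'}=\frac{d+2}{2d}$.

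At a fixed time, Lemma~\ref{lem 2.5.} applied with $p=n_0'$, $q=2$ and $s$ replaced by $s-1\ge 0$ yields
\[
\||x|^{-b}|u|^{\sigma}u\|_{\dot{H}^{s-1}_{n_0',2}}\lesssim\||x|^{-b}\|_{\dot{H}^{s-1}_{d/(b+s-1),\infty}}\||u|^{\sigma}u\|_{L^{p_2,2}}+\||x|^{-b}\|_{L^{d/b,\infty}}\||u|^{\sigma}u\|_{\dot{H}^{s-1}_{p_4,2}},
\]
with $\frac{1}{p_2}=\frac{1}{n_0'}-\frac{b+s-1}{d}$ and $\frac{1}{p_4}=\frac{1}{n_0'}-\frac{b}{d}$. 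The two norms on $|x|^{-b}$ are finite constants by Lemma~\ref{lem 2.8.}, because $b+s-1<d$ and $b<d$ both follow from the hypotheses $b<2+\frac{d}{2}-s$, $s<\frac{d}{2}$ and $d\ge 3$.

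For the low-regularity term, Lorentz H\"older gives $\||u|^{\sigma}u\|_{L^{p_2,2}}\lesssim\|u\|^{\sigma+1}_{L^{p_2(\sigma+1),2(\sigma+1)}}$, and the critical identity $\sigma(d-2s)=8-2b$ is exactly the scaling equality $\frac{1}{p_2(\sigma+1)}=\frac{1}{r_0}-\frac{s}{d}$. Hence Lemma~\ref{lem 2.4.} together with $L^{\alpha,2}\hookrightarrow L^{\alpha,2(\sigma+1)}$ produces the bound $\|u\|^{\sigma+1}_{\dot{H}^{s}_{r_0,2}}$. For the high-regularity term I invoke Corollary~\ref{cor 2.7.} with $s$ replaced by $s-1$; the threshold $\sigma>\lceil s-1\rceil-1$ coincides with the hypothesis $\sigma>\lceil s\rceil-2$ whether or not $s\in\mathbb Z$. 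Picking $\frac{1}{p_{4,1}}=\frac{1}{r_0}-\frac{s}{d}$ and $\frac{1}{p_{4,2}}=\frac{1}{r_0}-\frac{1}{d}$ with second Lorentz indices $q_{4,1}=q_{4,2}=2(\sigma+1)$, the identity $\frac{\sigma}{p_{4,1}}+\frac{1}{p_{4,2}}=\frac{1}{p_4}$ again reduces to $\sigma(d-2s)=8-2b$, and Lemmas~\ref{lem 2.4.} and~\ref{lem 2.2.} bound both resulting factors by $\|u\|_{\dot{H}^{s}_{r_0,2}}$.

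Combining the two spatial bounds and closing with H\"older in time (via $\frac{1}{m_0'}=\frac{\sigma+1}{q_0}$) gives the claimed estimate. The hard part is purely the exponent bookkeeping: one has to verify that every Lorentz exponent lies in $(1,\infty)$ (most notably $p_2<\infty$, which becomes exactly $b+s<2+\frac{d}{2}$), that $p_2(\sigma+1),p_{4,1},p_{4,2}\ge r_0$ as required by Lemma~\ref{lem 2.4.} (which reduces to $s\ge 1$), and that every arithmetic identity between the exponents truly collapses to the single critical relation $\sigma(d-2s)=8-2b$.
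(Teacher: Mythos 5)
Your proof is correct and follows essentially the same route as the paper's: fractional product rule (Lemma~\ref{lem 2.5.}) at regularity $s-1$ to split off the weight, Lemma~\ref{lem 2.8.} for the weight norms, Corollary~\ref{cor 2.7.} at regularity $s-1$ (giving the threshold $\sigma>\lceil s\rceil-2$) together with Lemmas~\ref{lem 2.2.} and~\ref{lem 2.4.} for the $u$-factors, and H\"older in time via $\frac{1}{m_0'}=\frac{\sigma+1}{q_0}$; your $p_{4,1}=p_2(\sigma+1)$ and $p_{4,2}$ are the paper's $\alpha$ and $\beta$. The only small slip is the parenthetical attributing $s\ge1$ to the inequalities $p_2(\sigma+1),p_{4,1},p_{4,2}\ge r_0$ (these hold for all $s\ge0$); the hypothesis $s\ge1$ is actually what lets you invoke Lemmas~\ref{lem 2.5.},~\ref{lem 2.8.} and Corollary~\ref{cor 2.7.} at the nonnegative regularity $s-1$.
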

\begin{proof}
An easy computation shows that
\begin{equation} \label{GrindEQ__4_1_}
\frac{1}{n_0'} =\frac{1}{p_1}+\frac{b}{d}=\frac{1}{p_2}+\frac{b+s-1}{d}.
\end{equation}
where
\begin{equation} \label{GrindEQ__4_2_}
\frac{1}{p_1} = \frac{\sigma}{\alpha}+\frac{1}{\beta},~\frac{1}{p_2}=\frac{\sigma+1}{\alpha},~\frac{1}{\alpha}=\frac{1}{r_0} -\frac{s}{d},~\frac{1}{\beta}=\frac{1}{r_0}-\frac{1}{d}.
\end{equation}
We can also verify that
\begin{equation} \nonumber
\frac{1}{r_0}>\frac{s}{d}\Leftrightarrow b<2+\frac{d}{2}-s.
\end{equation}
Hence, using Lemma \ref{lem 2.4.}, it follows from the assumptions $b<2+\frac{d}{2}-s$ and $s\ge 1$ that
\begin{equation} \label{GrindEQ__4_3_}
\dot{H}_{r_0,2}^s\hookrightarrow L^{\alpha,2},~\dot{H}_{r_0,2}^{s}\hookrightarrow \dot{H}_{\beta,2}^{s-1}.
\end{equation}
Using \eqref{GrindEQ__4_1_}--\eqref{GrindEQ__4_3_}, Corollary \ref{cor 2.7.}, Lemmas \ref{lem 2.2.}, \ref{lem 2.5.} and \ref{lem 2.8.}, we have
\begin{eqnarray}\begin{split} \nonumber
\left\| |x|^{-b}|u|^{\sigma}u\right\| _{\dot{H}_{n_0',2}^{s-1} } &\lesssim \left\| |x|^{-b}\right\| _{L^{d/b,\infty}}\left\| |u|^{\sigma}u\right\| _{\dot{H}_{p_1,2}^{s-1}}
+\left\| |x|^{-b}\right\| _{\dot{H}_{d/(b+s-1),\infty}^{s-1}}\left\| |u|^{\sigma}u\right\| _{L^{p_2,2}}\\
&\lesssim \left\| u\right\| _{L^{\alpha,2(\sigma+1)}}^{\sigma}\left\| u\right\| _{\dot{H}_{\beta,2(\sigma+1)}^{s-1}}+\left\| u\right\| _{L^{\alpha,2(\sigma+1)}}^{\sigma+1}\\
&\lesssim \left\| u\right\| _{L^{\alpha,2}}^{\sigma}\left\| u\right\| _{\dot{H}_{\beta,2}^{s-1}}+\left\| u\right\| _{L^{\alpha,2}}^{\sigma+1}
\lesssim \left\| u\right\| _{\dot{H}_{r_0,2}^{s} }^{\sigma +1},
\end{split}\end{eqnarray}
where we used the fact that $\sigma>\left\lceil s\right\rceil -2$ when $\sigma$ is not an even integer.
On the other hand, we can also see that
\begin{equation}\label{GrindEQ__4_4_}
\frac{1}{m_0'}=\frac{\sigma+1}{q_0}.
\end{equation}
Using \eqref{GrindEQ__4_3_}, \eqref{GrindEQ__4_4_} and H\"{o}lder's inequality, we immediately get the desired result.
\end{proof}

\begin{lemma}\label{lem 4.2.}
Let $d\in \mathbb N$, $0\le s< \frac{d}{2}$, $0<b<\min\{4,d\}$ and $\sigma=\frac{8-2b}{d-2s}$. Then for any interval $I(\subset \mathbb R)$, there exist $(m_1,n_1)\in B_0$, $(q_1,r_1)\in B_0$ and $(q_2,r_2)\in B_0$ such that
\begin{equation} \nonumber
\left\||x|^{-b}|u|^{\sigma}v\right\|_{L^{m_1'}(I,L^{n_1',2})}
\lesssim \left\|u\right\|^{\sigma}_{L^{q_1}(I,\dot{H}_{r_1,2}^{s})}\left\|v\right\|_{L^{q_2}(I,L^{r_2,2})}.
\end{equation}
\end{lemma}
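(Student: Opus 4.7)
The strategy parallels Lemma~\ref{lem 4.1.} but is simpler, since the target norm carries no derivative and $v$ appears without regularity; all regularity is therefore concentrated on the $\sigma$ copies of $u$ via Sobolev embedding. I would first select two pairs $(q_1,r_1),(q_2,r_2)\in B_0$ (to be specified), set $1/\alpha=1/r_1-s/d$, and then \emph{define} $(m_1,n_1)$ through the H\"older scaling identities
\begin{equation*}
\frac{1}{n_1'}=\frac{b}{d}+\frac{\sigma}{\alpha}+\frac{1}{r_2},\qquad \frac{1}{m_1'}=\frac{\sigma}{q_1}+\frac{1}{q_2}.
\end{equation*}
A short computation, using the critical identity $\sigma(d/2-s)=4-b$ together with $4/q_j=d/2-d/r_j$ for $j=1,2$, shows that the biharmonic scaling relation $4/m_1=d/2-d/n_1$ holds automatically, so $(m_1,n_1)\in B$ whenever $(q_1,r_1),(q_2,r_2)\in B$.

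With these exponents in hand I would apply Lemma~\ref{lem 2.1.} twice, using $|x|^{-b}\in L^{d/b,\infty}$ (Lemma~\ref{lem 2.8.} at $s=0$) and the homogeneity $\||u|^\sigma\|_{L^{\alpha/\sigma,\infty}}=\|u\|_{L^{\alpha,\infty}}^\sigma$; the Lorentz second indices combine as $\tfrac12=\tfrac1\infty+\tfrac1\infty+\tfrac12$, which is consistent with Lemma~\ref{lem 2.1.}. This yields the pointwise-in-time estimate
\begin{equation*}
\bigl\||x|^{-b}|u|^\sigma v\bigr\|_{L^{n_1',2}}\lesssim \|u\|_{L^{\alpha,\infty}}^\sigma\|v\|_{L^{r_2,2}}.
\end{equation*}
The Lorentz embedding $L^{\alpha,2}\hookrightarrow L^{\alpha,\infty}$, together with the Sobolev-Lorentz embedding $\dot H_{r_1,2}^s\hookrightarrow L^{\alpha,2}$ from Lemma~\ref{lem 2.4.} (applicable since $1/r_1>s/d$, vacuously if $s=0$), then converts $\|u\|_{L^{\alpha,\infty}}^\sigma$ into $\|u\|_{\dot H_{r_1,2}^s}^\sigma$; a final H\"older in time, with $1/m_1'=\sigma/q_1+1/q_2$, produces the claimed space-time estimate.

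The main obstacle lies in the parameter selection: one must pick $(q_1,r_1),(q_2,r_2)\in B_0$ so that also $(m_1,n_1)\in B_0$ (the \emph{strict} range, not merely $B$) and simultaneously $1/r_1>s/d$. Substituting $b-s\sigma=4-d\sigma/2$, the requirement $(m_1,n_1)\in B_0$ collapses into the two-sided inequality
\begin{equation*}
\frac{\sigma+1}{2}-\frac{4}{d}\;<\;\frac{\sigma}{r_1}+\frac{1}{r_2}\;<\;\frac{\sigma}{2}+\frac{\max\{d-4,0\}}{2d},
\end{equation*}
whose width is strictly positive for every $d\in\mathbb N$. The hypotheses $s<d/2$ and $0<b<\min\{4,d\}$ are precisely what is needed to guarantee that this target interval can be reached by values $(1/r_1,1/r_2)$ within the box $\bigl(\max\{s/d,\,\max\{d-4,0\}/(2d)\},\,1/2\bigr)^2$. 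Once such admissible pairs are fixed, the rest of the argument is routine H\"older-Sobolev bookkeeping.
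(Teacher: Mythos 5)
Your proposal follows essentially the same route as the paper: isolate $|x|^{-b}\in L^{d/b,\infty}$ via Lorentz--H\"older, trade the $\sigma$ copies of $u$ for $\|u\|^\sigma_{\dot H^s_{r_1,2}}$ through the Sobolev--Lorentz embedding $\dot H^s_{r_1,2}\hookrightarrow L^{\alpha,2}\hookrightarrow L^{\alpha,\infty}$ with $1/\alpha=1/r_1-s/d$, keep $v$ in $L^{r_2,2}$, then do H\"older in time; and check that the biharmonic scaling $(m_1,n_1)\in B$ is automatic so the whole thing reduces to choosing $(q_1,r_1),(q_2,r_2)\in B_0$ putting $(m_1,n_1)$ in $B_0$ with $1/r_1>s/d$. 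That is exactly the structure of the paper's argument, which then verifies the parameter window by splitting into $d\ge4$ and $d<4$.

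One small slip in your bookkeeping: for $d<4$ the constraint $\frac{1}{n_1'}<1-\max\{\frac{d-4}{2d},0\}=1$ translates, after substituting $b-\sigma s=4-\tfrac{d\sigma}{2}$, into $\tfrac{\sigma}{r_1}+\tfrac{1}{r_2}<\tfrac{\sigma}{2}+1-\tfrac{4}{d}$, not $\tfrac{\sigma}{2}$ as your display suggests; your formula $\tfrac{\sigma}{2}+\frac{\max\{d-4,0\}}{2d}$ is correct only for $d\ge4$. The resulting interval still has positive width ($\tfrac12$ for $d<4$, $\tfrac2d$ for $d\ge4$), so the existence conclusion is unaffected, but the case $d<4$ genuinely needs the extra $1-\tfrac4d$ term. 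The paper handles precisely this by its explicit two-case verification in (4.6)--(4.8); if you carry your sketch to completion you should do the same.
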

\begin{proof}
We first claim that there exist $(m_1,n_1)\in B_0$, $(q_1,r_1)\in B_0$ and $(q_2,r_2)\in B_0$ such that
\begin{equation} \label{GrindEQ__4_5_}
\frac{1}{n_1'} =\sigma \left(\frac{1}{r_1} -\frac{s}{d} \right)+\frac{1}{r_2}+\frac{b}{d}, ~\frac{1}{r_1} -\frac{s}{d} >0.
\end{equation}
In fact, we can see that there exists $(m_1,n_1)\in B_0$ satisfying \eqref{GrindEQ__4_5_} provided that we can prove that there exist $r_1,r_2>0$ such that
\begin{equation} \label{GrindEQ__4_6_}
\left\{\begin{array}{l}
{\max\left\{\frac{d-4}{2d},~\frac{s}{d}\right\}<\frac{1}{r_1}<\frac{1}{2},}\\
{\max\left\{\frac{d-4}{2d},0\right\}<\frac{1}{r_2}<\frac{1}{2},}\\
{\frac{1}{2}<\sigma\left(\frac{1}{r_1}-\frac{s}{d}\right)+\frac{1}{r_2}+\frac{b}{d}<\min\{1,~\frac{1}{2}+\frac{2}{d}\}.}\\
\end{array}\right.
\end{equation}
We divide the study in two cases: $d\ge4$ and $d<4$.
If $d<4$, then we can easily check that there exist $r_2>0$ satisfying \eqref{GrindEQ__4_6_}, if we can choose $r_1>0$ such that
\begin{equation} \label{GrindEQ__4_7_}
\frac{\sigma s}{d}<\frac{\sigma}{r_1}<\min\{\frac{\sigma}{2},1+\frac{\sigma s}{d}-\frac{b}{d}\}.
\end{equation}
It is obvious that there exists $r_1>0$ satisfying \eqref{GrindEQ__4_7_}, due to the fact that $s<\frac{d}{2}$ and $b<d$.
If $d\ge4$, then we can also see that there exists $r_2>0$ satisfying \eqref{GrindEQ__4_6_}, if we can choose $r_1>0$ such that
\begin{equation}\label{GrindEQ__4_8_}
\max\left\{\frac{d-4}{2d},\;\frac{s}{d}\right\}<\frac{1}{r_1}<\min\{\frac{1}{2},\frac{s}{d}+\frac{4-b}{d\sigma}\}.
\end{equation}
We can easily check that there exists $r_1$ satisfying \eqref{GrindEQ__4_8_} by using the fact $s<\frac{d}{2}$, $b<4$ and $\sigma=\frac{8-2b}{d-2s}$.

Using \eqref{GrindEQ__4_5_}, Lemmas \ref{lem 2.1.}, \ref{lem 2.2.} and \ref{lem 2.4.}, we have
\begin{eqnarray}\begin{split} \label{GrindEQ__4_9_}
\left\| |x|^{-b}|u|^{\sigma}v\right\| _{L^{n_1',2}} &\lesssim  \left\| |x|^{-b}\right\| _{L^{d/b,\infty}}\left\| u\right\| _{L^{\bar{\alpha},2}}^{\sigma}\left\| v\right\| _{L^{r_2,2}}
\lesssim \left\| u\right\| _{\dot{H}_{r_1,2}^{s} }^{\sigma }\left\| v\right\| _{L^{r_2,2}},
\end{split}\end{eqnarray}
where $\frac{1}{\bar{\alpha}}:=\frac{1}{r_1}-\frac{s}{d}$.
Meanwhile, \eqref{GrindEQ__4_5_} is equivalent to
\begin{equation}\label{GrindEQ__4_10_}
\frac{1}{m_1'}=\frac{\sigma}{q_1}+\frac{1}{q_2}.
\end{equation}
Using \eqref{GrindEQ__4_9_}, \eqref{GrindEQ__4_10_} and H\"{o}lder's inequality, we immediately get the desired result.
\end{proof}
We are ready to prove Theorem \ref{thm 1.1.}.
\begin{proof}[{\bf Proof of Theorem \ref{thm 1.1.}}]

We first prove the local well-posedness result.
Let $T>0$ and $M>0$ which will be chosen later. Given $I=[-T,T]$, we define the complete metric space $(D_1,d)$ as follows:
\begin{equation} \nonumber
D_1=\left\{u\in \bigcap_{i=0}^{2}L^{q_{i}}(I, H_{r_{i},2}^{s}):~\max_{i\in\{0,1,2\}}\left\|u\right\|_{L^{q_{i}}(I, H_{r_{i}}^{s})} \le M\right\},
\end{equation}
\begin{equation} \nonumber
d(u,v)=\max_{i\in\{0,1,2\}}\left\|u-v\right\|_{L^{q_{i}}(I, L^{r_{i}})},
\end{equation}
where $(q_{i},r_{i})\in B_0$ is given in Lemmas \ref{lem 4.1.} and \ref{lem 4.2.}.
Now we consider the mapping
\begin{equation}\label{GrindEQ__4_11_}
\mathscr{T}:~u(t)\to S(t)u_{0} -i\lambda \int _{0}^{t}S(t-\tau)|x|^{-b}|u(\tau)|^{\sigma}u(\tau)d\tau .
\end{equation}
Lemma \ref{lem 2.9.} (standard Strichartz estimates) and Corollary \ref{cor 3.13.} (regular Strichartz estimates) yield that
\begin{eqnarray}\begin{split} \label{GrindEQ__4_12_}
\max_{i\in\{0,1,2\}}\left\|\mathscr{T}u\right\|_{L^{q_{i}}(I,H_{r_{i},2}^{s})}&\le \max_{i\in\{0,1,2\}}\left\|\mathscr{T}u\right\|_{L^{q_{i}}(I,\dot{H}_{r_{i},2}^{s})}+\max_{i\in\{0,1,2\}}\left\|\mathscr{T}u\right\|_{L^{q_{i}}(I,L^{r_{i},2})}\\
&\le \max_{i\in\{0,1,2\}}\left\|S(t)u_0\right\|_{L^{q_{i}}(I,H_{r_{i},2}^{s})}+C\left\||x|^{-b}|u|^{\sigma}u\right\|_{L^{m_0'}(I,\dot{H}_{n_0',2}^{s-1})}\\
&~~~+C\left\||x|^{-b}|u|^{\sigma}u\right\|_{L^{m_1'}(I,L^{n_1',2})},
\end{split}\end{eqnarray}
where $(m_0,n_0)\in S$ is given in Lemma \ref{lem 4.1.} and $(m_1,n_1)\in B_0$ is as in Lemma \ref{lem 4.2.}.
By Strichartz estimate \eqref{GrindEQ__2_1_}, we can see that
$$
\max_{i\in\{0,1,2\}}\left\|S(t)u_0\right\|_{L^{q_{i}}([-T,T],H_{r_{i},2}^{s})},~\textnormal{as}~T\to 0.
$$
We take $M>0$ satisfying $CM^{\sigma}\le \frac{1}{4}$ and $T>0$ such that
\begin{equation}\label{GrindEQ__4_13_}
\max_{i\in\{0,1,2\}}\left\|S(t)u_0\right\|_{L^{q_{i}}([-T,T],H_{r_{i},2}^{s})}\le \frac{M}{2}.
\end{equation}
Using \eqref{GrindEQ__4_12_}, \eqref{GrindEQ__4_13_}, Lemmas \ref{lem 4.1.} and \ref{lem 4.2.}, we have
\begin{equation}\label{GrindEQ__4_14_}
\max_{i\in\{0,1,2\}}\left\|\mathscr{T}u\right\|_{L^{q_{i}}(I,H_{r_{i}}^{s})} \le \frac{M}{2}+CM^{\sigma+1}\le M.
\end{equation}
Similarly, it follows from Lemmas \ref{lem 2.9.} and \ref{lem 4.2.} that
\begin{eqnarray}\begin{split}\label{GrindEQ__4_15_}
d(\mathscr{T}u,\mathscr{T}v)&\lesssim \left\||x|^{-b}(|u|^{\sigma }+|v|^{\sigma})|u-v|\right\|_{L^{m_1'}(I,L^{n_1',2})} \\
&\le 2C M^{\sigma}d(u,v)\le \frac{1}{2}d(u,v).
\end{split}\end{eqnarray}
Using \eqref{GrindEQ__4_14_}, \eqref{GrindEQ__4_15_} and the standard argument(see e.g. Section 4.3 in \cite{WHHG11}), we can get the desired result.

Let us prove the small data global well-posedness result.
Let $M_1>0$ and $M_2>0$ which will be chosen later. We define the complete metric space $(D_2,d)$ as follows:
\begin{equation} \nonumber
D_2=\left\{u\in \bigcap_{i=0}^{2}L^{q_{i}}(\mathbb R, H_{r_{i},2}^{s}):~\max_{i\in\{0,1,2\}}\left\|u\right\|_{L^{q_{i}}(\mathbb R, H_{r_{i}}^{s})} \le M_1,~\max_{i\in\{0,1,2\}}\left\|u\right\|_{L^{q_{i}}(\mathbb R, \dot{H}_{r_{i}}^{s})} \le M_2\right\},
\end{equation}
\begin{equation} \nonumber
d(u,v)=\max_{i\in\{0,1,2\}}\left\|u-v\right\|_{L^{q_{i}}(\mathbb R, L^{r_{i}})},
\end{equation}
where $(q_{i},r_{i})\in B_0$ is given in Lemmas \ref{lem 4.1.} and Lemma \ref{lem 4.2.}.
Using the similar argument to the above, we have
\begin{equation}\label{GrindEQ__4_16_}
\max_{i\in\{0,1,2\}}\left\|\mathscr{T}u\right\|_{L^{q_{i}}(I,H_{r_{i}}^{s})} \le C \left\| u_{0} \right\| _{H^{s} }+CM_2^{\sigma}M_1,
\end{equation}
\begin{equation}\label{GrindEQ__4_17_}
\max_{i\in\{0,1,2\}}\left\|\mathscr{T}u\right\|_{L^{q_{i}}(I,\dot{H}_{r_{i}}^{s})} \le C \left\| u_{0} \right\| _{\dot{H}^{s} }+CM_2^{\sigma+1},
\end{equation}
\begin{equation}\label{GrindEQ__4_18_}
d(\mathscr{T}u,\mathscr{T}v)\le 2C M_2^{\sigma}d(u,v).
\end{equation}
Putting $M_1=2C\left\| u_{0} \right\| _{H^{s} }$, $M_2=2C\left\| u_{0} \right\| _{\dot{H}^{s} } $ and $\delta=2(4C)^{-\frac{\sigma+1}{\sigma}}$.
If $\left\| u_{0} \right\| _{\dot{H}^{s} }\le \delta$, i.e. $CM_2^{\sigma}<\frac{1}{4}$, it follows from \eqref{GrindEQ__4_16_}--\eqref{GrindEQ__4_18_} that $\mathscr{T}:(D_2,d)\to (D_2,d)$ is a contraction mapping. Hence, there exists a unique global solution of \eqref{GrindEQ__1_1_} in $D_2$. The rest part is proved by using the standard argument (see e.g. \cite{AK23}). This completes the proof.
\end{proof}

\section{Proof of Theorem \ref{thm 1.5.}}\label{sec 5.}
In this section, we prove Theorem \ref{thm 1.5.}.
To this end, we establish the following nonlinear estimates.

\begin{lemma}\label{lem 5.1.}
Let $s>0$, $\sigma>\left\lceil s\right\rceil$ and $1<p,q,r<\infty$ with $\frac{1}{p}=\frac{1}{r}+\frac{\sigma}{q}$.
Assume that $f\in C^{\left\lceil s\right\rceil} \left(\mathbb C\to \mathbb C\right)$ satisfies
\begin{equation} \label{GrindEQ__5_1_}
|f^{(k)} (u)|\lesssim|u|^{\sigma +1-k} ,
\end{equation}
for any $0\le k\le 2$ and $u\in \mathbb C$. Then we have
\begin{eqnarray}\begin{split}\nonumber
\left\| f(u)-f(v)\right\| _{\dot{H}_{p,2}^{s} } \lesssim&(\left\| u\right\| _{L^{q,2}}^{\sigma } +\left\| v\right\| _{L^{q,2}}^{\sigma })\left\| u-v\right\| _{\dot{H}_{r,2}^{s} } \\
&+(\left\| u\right\| _{L^{q,2}}^{\sigma-1} +\left\| v\right\| _{L^{q,2}}^{\sigma-1})
(\left\| u\right\| _{\dot{H}_{r,2}^{s}}+\left\| v\right\| _{\dot{H}_{r,2}^{s}})\left\| u-v\right\| _{L^{q,2}}.
\end{split}\end{eqnarray}
\end{lemma}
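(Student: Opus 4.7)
The plan is to turn the difference into the product of $u-v$ with a Lipschitz-type multiplier via the fundamental theorem of calculus and then dispatch it using the fractional product rule (Lemma~\ref{lem 2.5.}) and the fractional chain rule (Lemma~\ref{lem 2.6.}) in Lorentz spaces. First I would set $w_\theta=v+\theta(u-v)$ and, using Wirtinger derivatives under $\mathbb{C}\simeq\mathbb{R}^2$, write
\[
f(u)-f(v) = (u-v)\,G_1 + \overline{(u-v)}\,G_2, \qquad G_j(u,v) := \int_0^1 \partial_j f(w_\theta)\,d\theta,
\]
with $\partial_1=\partial_z$, $\partial_2=\partial_{\bar z}$. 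The assumed pointwise bound on $f'$ yields $|G_j|\lesssim |u|^\sigma+|v|^\sigma$, and the two summands are handled identically, so I focus on $(u-v)G_1$.

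Next I would apply Lemma~\ref{lem 2.5.} to $(u-v)G_1$ with the two natural splittings of $\tfrac{1}{p}$,
\[
\tfrac{1}{p} = \tfrac{1}{r}+\tfrac{\sigma}{q} = \tfrac{1}{q}+\tfrac{1}{p_*}, \qquad \tfrac{1}{p_*}:=\tfrac{1}{r}+\tfrac{\sigma-1}{q},
\]
together with the Lorentz splitting $\tfrac{1}{2}=\tfrac{1}{2}+\tfrac{1}{\infty}$, arranged so that $u-v$ always receives the Lorentz-$2$ factor. This yields
\[
\|(u-v)G_1\|_{\dot{H}^s_{p,2}} \lesssim \|u-v\|_{\dot{H}^s_{r,2}}\|G_1\|_{L^{q/\sigma,\infty}} + \|u-v\|_{L^{q,2}}\|G_1\|_{\dot{H}^s_{p_*,\infty}}.
\]
The factor $\|G_1\|_{L^{q/\sigma,\infty}}$ is controlled by Hölder in Lorentz spaces (Lemma~\ref{lem 2.1.}) applied to $|G_1|\lesssim |u|^\sigma+|v|^\sigma$ together with $L^{q,2}\hookrightarrow L^{q,\infty}$ from Lemma~\ref{lem 2.2.}, giving the first summand of the claim.

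For the harder factor $\|G_1\|_{\dot{H}^s_{p_*,\infty}}$, I would first use Minkowski in $\theta$ to reduce to uniform control of $\|\partial_z f(w_\theta)\|_{\dot{H}^s_{p_*,\infty}}$ and then invoke Lemma~\ref{lem 2.6.}. Since that lemma requires a finite Lorentz second index $\tilde{q}$, I would first estimate in $\dot{H}^s_{p_*,\tilde{q}}$ for some $\tilde{q}<\infty$ and conclude with $\dot{H}^s_{p_*,\tilde{q}}\hookrightarrow \dot{H}^s_{p_*,\infty}$ (Lemma~\ref{lem 2.2.}). At level $k\in\{1,\dots,\lceil s\rceil\}$ of the chain-rule sum, the integrability splitting
\[
p_{1_k}=\tfrac{q}{\sigma-k}, \qquad p_{2_k}=r, \qquad p_{3_k}=q
\]
is compatible with $\tfrac{1}{p_*}=\tfrac{1}{p_{1_k}}+\tfrac{1}{p_{2_k}}+\tfrac{k-1}{p_{3_k}}$, and the pointwise bound $|(\partial_z f)^{(k)}(w)|\lesssim |w|^{\sigma-k}$ (whose positivity is guaranteed by $\sigma>\lceil s\rceil$) reduces the $k$-th summand to $\|w_\theta\|^{\sigma-1}_{L^{q,\cdot}}\|w_\theta\|_{\dot{H}^s_{r,\cdot}}$. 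Summing over $k$, integrating in $\theta$ via Minkowski, and using $\|w_\theta\|_X\le\|u\|_X+\|v\|_X$ yield the second summand of the claim.

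The main obstacle will be the bookkeeping of Lorentz second indices: Lemma~\ref{lem 2.6.} demands $1\le q_{1_k},q_{2_k},q_{3_k}<\infty$, yet I want to absorb all lower-order $\|w\|_{L^{q,\cdot}}$ factors into $\|w\|_{L^{q,2}}$ and the $(-\Delta)^{s/2}w$ factor into $\|w\|_{\dot{H}^s_{r,2}}$. I would resolve this by taking $q_{2_k}=2$ and $q_{1_k},q_{3_k}$ sufficiently large but finite, so that $\tilde{q}$ sits slightly below $2$ but still above $1$, and then use $L^{q,2}\hookrightarrow L^{q,\alpha}$ for $\alpha\ge 2$ and $\dot{H}^s_{p_*,\tilde{q}}\hookrightarrow\dot{H}^s_{p_*,\infty}$ to absorb the losses without perturbing the final form of the estimate.
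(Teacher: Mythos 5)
Your proposal follows essentially the same route as the paper: both write $f(u)-f(v)=(u-v)\int_0^1 f'(v+t(u-v))\,dt$ (you additionally spell out the $\partial_z$, $\partial_{\bar z}$ decomposition, while the paper reduces to real $f$), apply the fractional product rule (Lemma~\ref{lem 2.5.}) with the same two exponent splittings of $1/p$, bound the Lebesgue-type factor by the pointwise bound on $f'$, and treat the Sobolev-type factor via Minkowski in $t$ plus the fractional chain rule (Lemma~\ref{lem 2.6.}), concluding with $\|w_t\|_X\lesssim\|u\|_X+\|v\|_X$. Your more explicit handling of the Lorentz second indices and of the full chain-rule sum is careful bookkeeping on top of the same argument, not a different method.
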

\begin{proof}
Without loss of generality and for simplicity, we assume that $f$ is a function of a real variable. We have
\[f(u)-f(v)=\left(u-v\right)\int _{0}^{1}f'\left(v+t\left(u-v\right)\right)dt .\]
Putting
\begin{equation} \label{GrindEQ__5_2_}
\frac{1}{p_{1} } =\frac{\sigma }{q} ,~\frac{1}{p_{2} } =\frac{\sigma-1}{q} +\frac{1}{r},
\end{equation}
it follows Lemmas \ref{lem 2.2.} and \ref{lem 2.5.} that
\begin{equation}\nonumber
\left\| f(u)-f(v)\right\| _{\dot{H}_{p,2}^{s} }=\left\| \left(u-v\right)\int _{0}^{1}f'\left(v+t\left(u-v\right)\right)dt \right\| _{\dot{H}_{p,2}^{s} }\lesssim I_{1} +I_{2},
\end{equation}
where

$$
I_{1}=\left\| \int _{0}^{1}f'\left(v+t\left(u-v\right)\right)dt \right\| _{L^{p_1,2}} \left\| u-v\right\| _{\dot{H}_{r,2}^{s} },
$$

$$
I_{2}=\left\| \int _{0}^{1}f'\left(v+t\left(u-v\right)\right)dt \right\| _{\dot{H}_{p_{2},2 }^{s} } \left\| u-v\right\| _{L^{q,2}}.
$$
First, we estimate $I_{1}$. It follows from \eqref{GrindEQ__5_1_} that
\begin{equation} \label{GrindEQ__5_3_}
|f'\left(v+t\left(u-v\right)\right)|\lesssim{\mathop{\max }\limits_{t\in \left[0,\, 1\right]}} \left|v+t\left(u-v\right)\right|^{\sigma } \lesssim |u|^{\sigma } +|v|^{\sigma },
\end{equation}
for any $0\le t \le 1$. Using \eqref{GrindEQ__5_2_}, \eqref{GrindEQ__5_3_}, Lemmas \ref{lem 2.1.} and \ref{lem 2.2.}, we have
\begin{eqnarray}\begin{split} \label{GrindEQ__5_4_}
I_{1} &\le \left\| u-v\right\| _{\dot{H}_{r,2}^{s} }\int _{0}^{1}\left\| f'\left(v+t\left(u-v\right)\right)\right\| _{L^{p_{1},2} } dt \\
&\lesssim \left\| |u|^{\sigma }+|v|^{\sigma } \right\| _{L^{p_{1},2} }  \left\| u-v\right\| _{\dot{H}_{r}^{s} }\le (\left\| u\right\| _{L^{q,2}}^{\sigma } +\left\| v\right\| _{L^{q,2}}^{\sigma })\left\| u-v\right\| _{\dot{H}_{r,2}^{s} }.
\end{split}\end{eqnarray}
Next, we estimate $I_{2}$. We have
\begin{equation} \label{GrindEQ__5_5_}
\left\| \int _{0}^{1}f'\left(v+t\left(u-v\right)\right)dt \right\| _{\dot{H}_{p_{2},2 }^{s} } \le \int _{0}^{1}\left\| f'\left(v+t\left(u-v\right)\right)\right\| _{\dot{H}_{p_{2},2 }^{s} } dt .
\end{equation}
It also follows from Lemmas \ref{lem 2.2.} and \ref{lem 2.6.} that
\begin{eqnarray}\begin{split} \label{GrindEQ__5_6_}
\left\| f'\left(v+t\left(u-v\right)\right)\right\| _{\dot{H}_{p_{2},2 }^{s} } &\lesssim\left\| f''\left(v+t\left(u-v\right)\right)\right\| _{L^{p_{3},2}} \left\| v+t\left(u-v\right)\right\| _{\dot{H}_{r,2}^{s} }.
\end{split}\end{eqnarray}
where $\frac{1}{p_{3} } =\frac{\sigma -1}{q}$. We can also see that
\begin{equation} \label{GrindEQ__5_7_}
|f''\left(v+t\left(u-v\right)\right)|\lesssim{\mathop{\max }\limits_{t\in \left[0,\, 1\right]}} \left|v+t\left(u-v\right)\right|^{\sigma-1} \lesssim |u|^{\sigma-1} +|v|^{\sigma-1},
\end{equation}
and
\begin{eqnarray} \label{GrindEQ__5_8_}
\begin{split}
\left\| v+t\left(u-v\right)\right\| _{\dot{H}_{r,2}^{s} }& \le {\mathop{\max }\limits_{t\in \left[0,\, 1\right]}} \left\|(-\Delta)^{s/2} v+t[(-\Delta)^{s/2} u-(-\Delta)^{s/2} v]\right\| _{L^{r,2} }\\
 &\lesssim\left\| u\right\| _{\dot{H}_{r,2}^{s} } +\left\| v\right\| _{\dot{H}_{r,2}^{s} },
\end{split}
\end{eqnarray}
for any $0\le t \le 1$.
Using \eqref{GrindEQ__5_2_}, \eqref{GrindEQ__5_5_}--\eqref{GrindEQ__5_8_}, Lemmas \ref{lem 2.1.} and \ref{lem 2.2.}, we have
\begin{eqnarray}\begin{split} \label{GrindEQ__5_9_}
I_{2}&=\left\| \int _{0}^{1}f'\left(v+t\left(u-v\right)\right)dt \right\| _{\dot{H}_{p_{2},2 }^{s} } \left\| u-v\right\| _{L^{q,2}} \\
&\lesssim
(\left\| u\right\| _{L^{q,2}}^{\sigma-1} +\left\| v\right\| _{L^{q,2}}^{\sigma-1})
(\left\| u\right\| _{\dot{H}_{r,2}^{s}}+\left\| v\right\| _{\dot{H}_{r,2}^{s}})\left\| u-v\right\| _{L^{q,2}} .
\end{split}\end{eqnarray}
In view of \eqref{GrindEQ__5_4_} and \eqref{GrindEQ__5_9_}, we get the desired result.
This completes the proof.
\end{proof}

\begin{lemma}\label{lem 5.2.}
Let $d\ge 3$, $1\le s< \frac{d}{2}$, $0<b<\min\{4,2+\frac{d}{2}-s\}$ and $\sigma= \frac{8-2b}{d-2s}$. If $\sigma$ is not an even integer, assume that $\sigma>\left\lceil s\right\rceil -1$. Then for any interval $I(\subset \mathbb R)$, we have
\begin{equation} \nonumber
\left\||x|^{-b}(|u|^{\sigma}u-|v|^{\sigma}v)\right\|_{L^{m_0'}(I,\dot{H}_{n_0',2}^{s-1})}
\lesssim (\left\|u\right\|^{\sigma}_{L^{q_0}(I,\dot{H}_{r_0,2}^{s})}+\left\|u\right\|^{\sigma}_{L^{q_0}(I,\dot{H}_{r_0,2}^{s})})
\left\|u-v\right\|_{L^{q_0}(I,\dot{H}_{r_0,2}^{s})},
\end{equation}
where $(m_0,n_0)\in S$ and $(q_0,r_0)\in B_0$ are as in Lemma \ref{lem 4.1.}.
\end{lemma}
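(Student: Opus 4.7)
The plan is to mirror the proof of Lemma \ref{lem 4.1.} almost verbatim, with the only new ingredient being the use of the difference estimate in Lemma \ref{lem 5.1.} in place of the direct bound on $|u|^{\sigma}u$. First, I would apply the fractional product rule (Lemma \ref{lem 2.5.}) with the same exponents $p_1, p_2, \alpha, \beta$ introduced in \eqref{GrindEQ__4_2_}, splitting $|x|^{-b}$ into a zero-derivative piece in $L^{d/b,\infty}$ and an $(s-1)$-regular piece in $\dot{H}^{s-1}_{d/(b+s-1),\infty}$ (the latter via Lemma \ref{lem 2.8.}). This reduces matters to the two spatial estimates
\[
\||u|^{\sigma}u-|v|^{\sigma}v\|_{\dot{H}^{s-1}_{p_1,2}} \quad \textrm{and} \quad \||u|^{\sigma}u-|v|^{\sigma}v\|_{L^{p_2,2}}.
\]

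For the first term, the key step is to invoke Lemma \ref{lem 5.1.} with $s-1$ in place of $s$ and with $f(u)=|u|^{\sigma}u$. Since $\lceil s-1\rceil=\lceil s\rceil-1$, the hypothesis $\sigma>\lceil s\rceil-1$ of Lemma \ref{lem 5.2.} is exactly what Lemma \ref{lem 5.1.} requires, and the pointwise growth conditions on $f^{(k)}$ with $k=0,1,2$ are immediate. This produces a bound of the form
\[
(\|u\|^{\sigma}_{L^{\alpha,2}}+\|v\|^{\sigma}_{L^{\alpha,2}})\|u-v\|_{\dot{H}^{s-1}_{\beta,2}}+(\|u\|^{\sigma-1}_{L^{\alpha,2}}+\|v\|^{\sigma-1}_{L^{\alpha,2}})(\|u\|_{\dot{H}^{s-1}_{\beta,2}}+\|v\|_{\dot{H}^{s-1}_{\beta,2}})\|u-v\|_{L^{\alpha,2}}.
\]
For the second term I would use the pointwise estimate $||u|^{\sigma}u-|v|^{\sigma}v|\lesssim (|u|^{\sigma}+|v|^{\sigma})|u-v|$ together with H\"older's inequality in Lorentz spaces (Lemma \ref{lem 2.1.}). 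Upgrading every right-hand norm through the embeddings \eqref{GrindEQ__4_3_}, i.e.\ $\dot{H}^{s}_{r_0,2}\hookrightarrow L^{\alpha,2}$ and $\dot{H}^{s}_{r_0,2}\hookrightarrow \dot{H}^{s-1}_{\beta,2}$ (Lemma \ref{lem 2.4.}, valid under $b<2+d/2-s$ and $s\ge 1$), yields the pointwise-in-time estimate
\[
\||x|^{-b}(|u|^{\sigma}u-|v|^{\sigma}v)\|_{\dot{H}^{s-1}_{n_0',2}}\lesssim (\|u\|^{\sigma}_{\dot{H}^{s}_{r_0,2}}+\|v\|^{\sigma}_{\dot{H}^{s}_{r_0,2}})\|u-v\|_{\dot{H}^{s}_{r_0,2}}.
\]
A final application of H\"older in time with the identity $1/m_0'=(\sigma+1)/q_0$ from \eqref{GrindEQ__4_4_} will then give the claim.

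The only subtle point is the boundary case $s=1$, for which $s-1=0$ lies outside the scope of Lemma \ref{lem 5.1.} (which requires $s>0$). In that degenerate case the first term collapses to an $L^{p_1,2}$-norm and can be treated exactly like the second term via the pointwise bound and H\"older; the assumption $\sigma>\lceil s\rceil-1=0$ is trivially satisfied, so no additional hypothesis is needed. Apart from this endpoint check, the argument is a direct transcription of Lemma \ref{lem 4.1.}'s proof, and the main substantive ingredient---Lemma \ref{lem 5.1.} applied with exponent shifted by one---is exactly what was set up in Section \ref{sec 5.}.
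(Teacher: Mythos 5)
Your proposal is correct and follows exactly the route the paper takes: its proof of Lemma \ref{lem 5.2.} is literally the one-line remark that the result follows from Lemma \ref{lem 5.1.} combined with the argument of Lemma \ref{lem 4.1.}, which is precisely what you carry out (with the right shift $s\mapsto s-1$, so that $\sigma>\lceil s\rceil-1=\lceil s-1\rceil$ matches the hypothesis of Lemma \ref{lem 5.1.}). Your extra care at the endpoint $s=1$, where the $\dot H^{s-1}$ piece degenerates to a Lorentz norm handled by the pointwise bound, is a detail the paper glosses over but is handled correctly.
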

\begin{proof}
The result follows direclty from Lemma \ref{lem 5.1.} and the same argument as in the proof of Lemma \ref{lem 4.1.}.
\end{proof}

We are ready to prove Theorem \ref{thm 1.5.}.
\begin{proof}[{\bf Proof of Theorem \ref{thm 1.5.}}]
For interval $I\subset \mathbb R$, we define the complete metric space $(D_3,d_3)$ as follows:
\begin{equation} \nonumber
D_3=\left\{u\in L^{q_0}(I, \dot{H}_{r_0,2}^{s}):~\left\|u\right\|_{L^{q_0}(I, \dot{H}_{r_0}^{s})} \le M_3\right\},
\end{equation}
\begin{equation} \nonumber
d_3(u,v)=\left\|u-v\right\|_{L^{q_0}(I, \dot{H}_{r_0}^{s})},
\end{equation}
where $(q_0,r_0)\in B_0$ is given in Lemma \ref{lem 4.1.} and $M_3>0$ is determined later.
Considering the mapping $\mathscr{T}$ defined by \eqref{GrindEQ__4_11_}, it follows from  Corollary \ref{cor 3.13.} (regular Strichartz estimates), Lemmas \ref{lem 4.1.} and \ref{lem 5.2.} that
\begin{eqnarray}\begin{split} \label{GrindEQ__5_10_}
\left\|\mathscr{T}u\right\|_{L^{q_0}(I,\dot{H}_{r_0,2}^{s})}
&\le \left\|S(t)u_0\right\|_{L^{q_0}(I,\dot{H}_{r_0,2}^{s})}+C\left\||x|^{-b}|u|^{\sigma}u\right\|_{L^{m_0'}(I,\dot{H}_{n_0',2}^{s-1})}\\
&\le \left\|S(t)u_0\right\|_{L^{q_0}(I,\dot{H}_{r_0,2}^{s})}+C\left\|u\right\|^{\sigma+1}_{L^{q_0}(I,\dot{H}_{r_0,2}^{s})}
\end{split}\end{eqnarray}
and
\begin{eqnarray}\begin{split}\label{GrindEQ__5_11_}
d_3(\mathscr{T}u,\mathscr{T}v)&\lesssim \left\||x|^{-b}(|u|^{\sigma}u-|v|^{\sigma}v)\right\|_{L^{m_0'}(I,\dot{H}_{n_0',2}^{s-1})} \\
&\lesssim(\left\|u\right\|^{\sigma}_{L^{q_0}(I,\dot{H}_{r_0,2}^{s})}+\left\|u\right\|^{\sigma}_{L^{q_0}(I,\dot{H}_{r_0,2}^{s})})
\left\|u-v\right\|_{L^{q_0}(I,\dot{H}_{r_0,2}^{s})},
\end{split}\end{eqnarray}
where $(m_0,n_0)\in S$ is given in Lemma \ref{lem 4.1.}.
Using \eqref{GrindEQ__5_10_}, \eqref{GrindEQ__5_11_} and repeating the argument similar to that in the proof of Theorem \ref{thm 1.1.}, we can get the desired result.
\end{proof}

\emph{E-mail address}: cioc8@ryongnamsan.edu.kp

\emph{E-mail address}: jm.an0221@ryongnamsan.edu.kp

\emph{E-mail address}: jm.kim0211@ryongnamsan.edu.kp

\end{document}